\newtheorem{thm}{Theorem}
\newtheorem{cor}[thm]{Corollary}
\newtheorem{prop}[thm]{Proposition}
\newtheorem{lem}[thm]{Lemma}
\newtheorem{conj}[thm]{Conjecture}
\newtheorem{defn}[thm]{Definition}
\newtheorem{rem}[thm]{Remark}
\newcommand{\R}[0]{\mathbb{R}}
\newcommand{\Z}[0]{\mathbb{Z}}
\newcommand{\T}[0]{\mathbb{T}}
\newcommand{\C}[0]{\mathbb{C}}
\renewcommand{\t}[1]{\textup{#1}}
\numberwithin{equation}{section}
\newcommand\numberthis{\addtocounter{equation}{1}\tag{\theequation}}
\patchcmd{\@settitle}{\uppercasenonmath\@title}{}{}{}
\patchcmd{\@setauthors}{\MakeUppercase}{}{}{}
\patchcmd{\section}{\scshape}{}{}{}
\title[Schr\"odinger equations on compact globally symmetric spaces]{Schr\"odinger equations on compact globally symmetric spaces}
\author[Y. Zhang]{Yunfeng Zhang}
\address{Department of Mathematics, University of Connecticut, Storrs, CT 06269}
\email{yunfeng.zhang@uconn.edu}
\begin{document}

\onehalfspacing

\begin{abstract} 
In this article, we establish scale-invariant Strichartz estimates for the Schr\"odinger equation on arbitrary compact globally symmetric spaces and some bilinear Strichartz estimates on products of rank-one spaces. As applications, we provide local well-posedness results for nonlinear Schr\"odinger equations on such spaces in both subcritical and critical regularities.
\end{abstract}

\maketitle


\section{Introduction}
\subsection{NLS on compact manifolds}
Let $M$ be a Riemannian manifold. 
The Cauchy problem for the nonlinear Schr\"odinger equation (NLS)
\begin{align}\label{Cauchyproblem}
\left\{
\begin{array}{ll}
i\partial_t u+\Delta u=F(u), & \ u=u(t,x),\ t\in\R, \ x\in M,\\
u(0,x)=u_0(x)\in H^s(M), & \ x\in M.
\end{array}
\right.
\end{align}
has been intensively studied in the literature. For simplicity as well as physical applications, the nonlinearity $F$ is usually taken to be a polynomial in $u$ and its complex conjugate $\bar{u}$, or a power like expression as
$$F(u)=\pm |u|^{\beta-1}u.$$
For such nonlinearities, the degree $\beta$ of $F$ may be defined, which contributes to the scaling symmetry of solutions $u(t,x)\mapsto \lambda^{\frac{2}{\beta-1}}u(\lambda^2 t,\lambda x)$ if $M$ is the Euclidean space $\R^d$. The critical Sobolev exponent 
$$s_c=\frac{d}{2}-\frac{2}{\beta-1}$$
leaves the homogeneous Sobolev $\dot{H}^{s_c}(\R^d)$-norm invariant, thus by an analogy to the Euclidean space, for the above Cauchy problem on any manifold $M$, we say the initial datum $u_0$ is of the subcritical regularity if $s>s_c$, critical if $s=s_c$, and supercritical if $s<s_c$. 
In this article, we focus on when $M$ is compact and the local well-posedness theory for the above NLS. By local well-posedness, we usually mean existence and uniqueness of a solution in (a subspace of) $C([-T,T], H^s(M))$ for some $T>0$, continuous dependence of the solution  on the initial data, and persistence of higher-order Sobolev regularities. A general strategy of establishing local well-posedness is to view the solution as the fixed point of the Duhamel operator
$$\Phi(u)=e^{it\Delta}u_0-i\int_0^te^{i(t-s)\Delta}F(u)\ ds,$$
and to establish that $\Phi$ is a contraction mapping on some ball in some suitable function subspace of $C([-T,T], H^s(M))$. This is then usually reduced to establishing linear and multi-linear Strichartz estimates, i.e., space-time estimates in Lebesgue spaces for the Schr\"odinger propagator $e^{it\Delta}$. 

We now review the literature. 
The tori were the first examples of compact manifold studied for the NLS. In the ground breaking paper of Bourgain \cite{Bou93}, there was established scale-invariant Strichartz estimates on square tori $M=\T^d$ of the form 
\begin{align}\label{Stri}
\|e^{it\Delta}f\|_{L^p(I\times M)}\leq C \|f\|_{H^{d/2-(d+2)/p}(M)}
\end{align}
for a range of $p$, where $I$ is a finite interval of $t$. This was then used to establish local well-posedness for the NLS in all subcritical regularities for a range of degree of the nonlinearity, combined with a novel choice of a function space, the so-called $X^{s,b}$-space or Fourier restriction space which measures how far is the solution from a linear evolution. For the critical regularity, the paper \cite{HTT11} of Herr, Tataru and Tzvetkov established local well-posedness as well as global well-posedness for small initial data on the three dimensional square torus for the energy critical exponent $s_c=1$, which serves as the first such critical result on compact manifolds. The main novelties in this paper are a tri-linear scale-invariant Strichartz estimate 
\begin{align}\label{trilinearStri}
\left\|\prod_{i=1}^3e^{it\Delta}f_i\right\|_{L^2(I\times M)}\leq C\left(\frac{N_3}{N_1}+\frac{1}{N_2}\right)^{\delta}N_2N_3\prod_{i=1}^3\|f_i\|_{L^2(M)}
\end{align}
for some $\delta>0$,  
where $f_i$ is spectrally localized in the band $[N_i,2N_i]$ and here it is assumed that $N_1\geq N_2\geq N_3$; and an application of some new function spaces the so-called $U^p$-, $V^p$-spaces which adapt perfectly to the above trilinear estimate. 
Similar results have also been established on irrational rectangular tori, see for example the work \cite{GOW14} of Guo, Oh, and Wang, and \cite{Str14} of Strunk. 
Later, Bourgain and Demeter \cite{BD15} established the full range 
of Strichartz estimates on irrational rectangular tori, with the scaling $\varepsilon$-loss eliminated in \cite{KV16} by Killip and Visan, and consequences of these Strichartz estimates to local well-posedness of NLS can be seen in \cite{KV16} and the work \cite{Lee19} of Lee. 

Much less is known on general compact manifolds. Burq, G\'erard and Tzvetkov established in \cite{BGT04} some non-scale-invariant Strichartz estimates on general compact manifolds as follows 
$$\|e^{it\Delta}f\|_{L^p(I,L^q(M))}\leq C\|f\|_{H^{1/p}(M)}$$
for pairs $(p,q)$ satisfying the following admissibility conditions
$$\frac{d}{2}=\frac{2}{p}+\frac{d}{q}, \ p,q\geq 2, \ (p,q,d)\neq(2,\infty,2).$$
Such admissibility conditions are informed by a scaling consideration, and if the Sobolev space $H^{1/p}(M)$ were replaced by $L^2(M)$, the above estimates become scale-invariant and hold true on the Euclidean spaces. These inequalities were used to derive local well-posedness for a range of subcritical regularity, but because of their scaling loss, they do not cover the full range of subcritical regularity as well as the critical regularity. 
Indeed, on the two sphere $\mathbb{S}^2$ and for the cubic NLS, the critical regularity is $s_c=0$, but as established by Burq, G\'erard and Tzvetkov in \cite{BGT02}, the Cauchy problem is not (uniformly) locally well-posed for any $s<1/4$, while it is locally well-posed for $s>1/4$ as established in \cite{BGT04}. For the proof of this latter result, a key ingredient is a sharp bilinear Strichartz estimate on the two sphere, which was in turn established 
by a sharp bilinear Laplace-Beltrami eigenfunction bound on the two sphere as follows 
$$\|f_1f_2\|_{L^2(M)}\leq C \min(N_1,N_2)^{1/4}\|f_1\|_{L^2(M)}\|f_2\|_{L^2(M)}$$
where $f_i$ is a Laplace-Beltrami eigenfunction with eigenvalue $N_i^2\geq 1$. 
On the three sphere $\mathbb{S}^3$ and the product $\mathbb{S}^2\times\mathbb{S}$ of spheres, following a similar approach,  Burq, G\'erard and Tzvetkov established in \cite{BGT05m} local and global well-posedness for the energy regularity $s=1$ for all $s_c<1$. Then Herr \cite{Her13} established on $\mathbb{S}^3$ and more generally three dimensional Zoll manifolds the critical counterpart, the local well-posedness as well as global well-posedness with small initial data for the energy critical regularity $s=s_c=1$, by a trilinear Strichartz estimate as in \eqref{trilinearStri}, and a key ingredient in getting such an estimate was a tri-linear Laplace-Beltrami eigenfunction bound similar to the above bilinear one as derived in \cite{BGT05m}. Later, the same results were established on $\mathbb{S}^2\times\mathbb{S}$ by Herr and Strunk \cite{HS15}. 

\subsection{Main results}
Apart from tori, spheres, or a product of spheres, local well-posedness results that beat the general results in \cite{BGT04} on compact manifolds seem missing in the literature. In this article, we provide such results on a wide class of compact manifolds, the compact globally symmetric spaces. By a compact globally symmetric space, we mean a compact manifold whose geodesic symmetries are global isometries. Such spaces are well classified: they are either compact Lie groups, or some compact homogeneous spaces; see Helgason's classic \cite{Hel01}. They distinguish themselves from general compact homogeneous spaces by a clearer structure and a harmonic analysis closer to that on Euclidean spaces; see Helgason's other classics \cite{Hel00} and \cite{Hel08}. In particular, any compact globally symmetric space $M$ has an embedded maximal torus, the dimension of which we call the rank of $M$, and this provides a bridge from harmonic analysis on $M$ to that on tori. Using such harmonic analysis, we first establish the following scale-invariant Strichartz estimate on these spaces. 
\begin{thm}\label{Main}
The scale-invariant estimate \eqref{Stri} holds for any $p\geq 2+\frac{8}{r}$ on any compact globally symmetric space of dimension $d$ and rank $r$, equipped with the canonical Killing metric.
\end{thm}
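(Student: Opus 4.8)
The plan is to reduce, by the Littlewood--Paley square function theorem on $M$ together with Minkowski's inequality (valid since $p\ge 2$), to the frequency-localized bound $\|e^{it\Delta}P_Nf\|_{L^p(I\times M)}\lesssim N^{d/2-(d+2)/p}\|f\|_{L^2(M)}$ for $f$ with spectrum in $\{|\mu|\sim N\}$; since this estimate at any single exponent $p_0$ propagates to all $p\ge p_0$ upon interpolation with the Bernstein bound $\|e^{it\Delta}P_Nf\|_{L^\infty(I\times M)}\lesssim N^{d/2}\|f\|_{L^2(M)}$, it suffices to treat $p$ at (or just above) the endpoint $p_0=2+8/r$. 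Because the canonical Killing metric normalizes the Laplace eigenvalues to the arithmetic shape $|\mu+\rho|^2-|\rho|^2$ with $\mu$ ranging over the fixed lattice of restricted dominant weights, the Schr\"odinger flow is essentially periodic in $t$, so one may take $I=[0,2\pi]$ and study $e^{it\Delta}P_N$ in the vicinity of the rational times $t=2\pi a/q$.

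The core estimate I would establish is a dispersive bound for the kernel $K_N(t,x,y)$ of $e^{it\Delta}P_N$: for $t$ near $2\pi a/q$,
\[
\sup_{x,y\in M}|K_N(t,x,y)|\ \lesssim\ \frac{N^{d}}{q^{r/2}\bigl(1+N\,|t-2\pi a/q|^{1/2}\bigr)^{r}}.
\]
The structural input is the Cartan--Helgason parametrization of the spectrum combined with the Weyl character formula: writing $x,y$ in Cartan (polar) coordinates relative to a maximal flat $r$-torus and expanding the spectral projectors in characters, $K_N$ becomes a fixed Weyl-denominator-type factor divided into an alternating sum over the Weyl group $W$ of exponential sums of the form $\sum_\mu \psi(\mu/N)\,e^{i(tQ(\mu)+\langle\mu,H\rangle)}$ on the flat $r$-torus, with $Q$ a positive-definite quadratic form on $\mathbb{R}^r$. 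Each such $r$-dimensional sum is controlled near $t=2\pi a/q$ by the classical Hardy--Littlewood circle method --- a Gauss-sum evaluation on the major arc and Weyl differencing (a Weyl- or Vinogradov-type bound) on the minor arcs --- which produces the dispersive factor $q^{-r/2}(1+N|t-2\pi a/q|^{1/2})^{-r}$; the numerator power $N^{d}$ records the sizes of the representation dimensions, which enter only as ``radial'' weights and do not sharpen the decay. Finally, since the Riemannian density on $M$ in these coordinates is comparable to the squared modulus of the Weyl denominator times a smooth positive factor, the denominator in the character expansion is absorbed, and the passage from the flat-torus estimate back to $M$ is lossless.

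Granting this, the Strichartz bound follows by $TT^*$ and Young's inequality. Interpolating the $L^1\to L^\infty$ bound above against the unitary $L^2\to L^2$ bound gives $\|e^{it\Delta}P_N\|_{L^{p'}(M)\to L^p(M)}\lesssim A(t)^{1-2/p}$, where $A(t)$ denotes the right-hand side above; the desired estimate, being equivalent via $TT^*$ to a bound from $L^{p'}_{t,x}$ to $L^p_{t,x}$ for $g\mapsto\int_I e^{i(t-s)\Delta}P_Ng(s)\,ds$, then reduces by Young's inequality in $t$ to $\|A(\cdot)^{1-2/p}\|_{L^{p/2}_t(I)}\lesssim N^{2s_c}$ with $s_c=d/2-(d+2)/p$. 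Summing the major-arc contributions about each $2\pi a/q$: the $\beta=t-2\pi a/q$ integral on a single arc is $\lesssim N^{-2}$ once $r(\tfrac{p}{2}-1)\ge 2$, i.e.\ $p\ge 2+\tfrac{4}{r}$; summing over the $\sim q$ reduced fractions of denominator $q$ and then over $q$ requires $\sum_q q^{1-r(p/2-1)/2}<\infty$, which is exactly $p>2+\tfrac{8}{r}$, and the resulting power of $N$ is precisely $N^{2s_c}$. Larger $p$ follow by interpolation as above; the endpoint $p=2+8/r$ itself requires a more careful summation at the borderline denominators (exploiting the extra minor-arc decay) or an $\varepsilon$-removal argument.

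The principal obstacle is the dispersive estimate in this generality. On a flat torus $K_N$ is literally an exponential sum, whereas on a general compact globally symmetric space it is such a sum divided by a denominator vanishing on the walls of the maximal flat torus, and one must verify that this vanishing is exactly compensated by the Jacobian of the Cartan decomposition --- uniformly up to the singular set --- so that no power of $N$ is lost near the walls. This bookkeeping has to be carried out for every admissible restricted root system, including the non-reduced $BC_r$ type and with arbitrary root multiplicities, and it is exactly at this point that the argument genuinely departs from the torus case and that the rank $r$, rather than the dimension $d$, governs the dispersive decay and hence the exponent threshold.
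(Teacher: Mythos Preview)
Your overall architecture --- Littlewood--Paley reduction, periodicity in $t$, $TT^*$, a major-arc dispersive bound on the kernel, then Farey dissection with interpolation --- is exactly the paper's. The gap is in the mechanism for the dispersive bound, and it is structural rather than a matter of bookkeeping.

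You propose to reach the pointwise kernel estimate via the Weyl character formula: write the kernel on the maximal flat as an alternating sum over $W$ divided by the Weyl denominator, then absorb the denominator into the Cartan-polar Jacobian. This is how the compact Lie group case goes (the author's own \cite{Zha20}), but it does not extend to a general $U/K$. The spherical functions $\varphi_\lambda$ on $U/K$ are \emph{not} given by any Weyl-character-type closed form; they are Heckman--Opdam (Jacobi) polynomials, and there is no finite alternating-sum expression to exploit. Your Jacobian claim fails for the same reason: the Cartan-polar density is $\prod_{\alpha\in\Sigma^+}|\sin\alpha(H)|^{m_\alpha}$, which equals the squared Weyl denominator only when every multiplicity $m_\alpha=2$, i.e.\ precisely in the group case. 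So the cancellation you rely on is unavailable in general, and the obstacle you flag at the end --- uniformity near the walls --- is a symptom of this, not something curable by case-checking root systems. The paper's substitute is Clerc's integral formula $\varphi_\lambda(u)=\int_{K}e^{\lambda(\mathcal{H}(uk))}\,dk$ (Theorem~\ref{Cle88}), the compact analogue of Harish-Chandra's, together with the crucial sign $\mathrm{Re}\,\lambda(\mathcal{H}(uk))\le 0$; this bounds the kernel pointwise by $\sup_k|\kappa_N(t,u,k)|$ with $\kappa_N$ a genuine exponential sum over $\Lambda^+$ carrying the polynomial amplitude $d_\lambda$, and Weyl differencing is applied directly to $\kappa_N$ with no denominator and no wall singularity. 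A secondary point you miss: the amplitude $d_\lambda$ has degree $d-r$, and after repeated summation by parts the boundary terms would cost an $N^\varepsilon$ loss unless one uses that $d_\lambda$ carries a genuine linear factor in each fundamental-weight coordinate (Corollary~\ref{dmu}); this is what buys the sharp endpoint.
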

A similar result was established on the special class of compact Lie groups in \cite{Zha20}. We will prove this theorem following a framework informed by Bourgain's work \cite{Bou93} on tori, while the key Lie theoretic input is an integral formula of spherical functions on compact globally symmetric spaces established by Clerc in \cite{Cle88}, similar to that of Harish-Chandra on noncompact symmetric spaces. As a byproduct of the proof of this theorem, we also provide the following $L^p$-bounds of Laplace-Beltrami eigenfunctions on compact globally symmetric spaces of high rank, which improve upon Sogge's classical general bound \cite{Sog88} on compact manifolds and match those established by Bourgain \cite{Bou93e} on tori. 

\begin{thm}\label{eigenfunctionbound}
On any compact globally symmetric space $M$ of dimension $d$ and rank $r\geq 5$, suppose $f$ is an eigenfunction of the Laplace-Beltrami operator of eigenvalue $-N^2\leq -1$. Then 
\begin{align}\label{eigen}
\|f\|_{L^p(M)}\leq C N^{\frac{d-2}{2}-\frac{d}{p}}\|f\|_{L^2(M)}
\end{align}
for any $p>2+\frac{8}{r-4}$. 
\end{thm}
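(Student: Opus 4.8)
The plan is to follow the proof of Theorem \ref{Main}, adapted from a space-time estimate to a fixed-time one, replacing the paraboloid of frequencies by a sphere. One cannot simply quote Theorem \ref{Main}: feeding an eigenfunction $f$ of eigenvalue $-N^2$ into \eqref{Stri} and using $|e^{it\Delta}f|=|f|$ only yields $\|f\|_{L^p(M)}\lesssim N^{d/2-(d+2)/p}\|f\|_{L^2(M)}$, whose exponent exceeds the asserted one by $1-2/p$. Instead, write $f=\sum_{\lambda\in S_N}f_\lambda$, where $f_\lambda$ lies in the $\lambda$-isotypic subspace $V_\lambda$ of $L^2(M)$ and $S_N=\{\lambda:\ |\lambda+\rho|^2=N^2+|\rho|^2\}$ is the set of spherical highest weights lying on a sphere of radius $\sim N$ in $\R^r$ (with respect to the form induced by the Killing metric); each $f_\lambda$ is a linear combination of left translates of the spherical function $\varphi_\lambda$, and $\dim V_\lambda$ grows polynomially, like $N^{d-r}$. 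The goal is to bound $\|f\|_{L^p(M)}$ by $N^{(d-2)/2-d/p}\bigl(\sum_{\lambda\in S_N}\|f_\lambda\|_{L^2(M)}^2\bigr)^{1/2}$.

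First, exactly as in the proof of Theorem \ref{Main}, Clerc's integral formula \cite{Cle88} expresses each spherical function, restricted to the maximal torus $\T^r$, as a superposition of plane waves $e^{i\langle w(\lambda+\rho),\,x\rangle}$ over the Weyl group $W$; together with the Weyl integration formula on $M$, this reduces $\|f\|_{L^p(M)}$ to a $|\delta|^2$-weighted $L^p(\T^r)$-norm (with $\delta$ the Weyl denominator) of a trigonometric polynomial whose frequencies all lie on a single sphere of radius $\sim N$ in a lattice of $\R^r$, namely $\bigcup_{\lambda\in S_N}W(\lambda+\rho)$. The passage from the rank $r$ in the torus estimate to the full dimension $d$ in the exponent is produced by the $d-r$ directions transverse to the torus: the multiplicity $\dim V_\lambda\sim N^{d-r}$ together with the vanishing of $\delta$ at the walls contributes a Bernstein-type factor $N^{(d-r)(1/2-1/p)}$, and $[(r-2)/2-r/p]+(d-r)(1/2-1/p)=(d-2)/2-d/p$. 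When $M$ is itself a torus, $d=r$ and this step is vacuous.

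The arithmetic heart is then the corresponding estimate on $\T^r$:
\[
\Bigl\|\sum_{|n|^2=N^2}a_n\,e^{in\cdot x}\Bigr\|_{L^p(\T^r)}\leq C\,N^{(r-2)/2-r/p}\Bigl(\sum_n|a_n|^2\Bigr)^{1/2},\qquad p>2+\frac{8}{r-4}.
\]
The endpoint bounds here --- $L^2$ by Plancherel, $L^\infty$ by Cauchy--Schwarz and the divisor bound $\#\{n\in\Z^r:\ |n|^2=N^2\}\lesssim N^{r-2}$ --- are individually sharp, but they interpolate to an exponent larger than the target by $2/p$; closing that gap requires a genuine discrete restriction estimate for the sphere at an intermediate exponent, obtained by counting lattice points on spheres along the lines of Bourgain's eigenfunction bounds on tori \cite{Bou93e}. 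This is where the hypotheses enter: the relevant exponential sums are controlled through the representation count for a positive-definite quadratic form in $r$ variables, which obeys the expected asymptotic $\asymp N^{r-2}$ with the equidistribution the argument needs precisely for $r\geq 5$ (for $r=4$ it already fails, the four-square count being too irregular); and in these estimates a sphere of frequencies behaves like a solid frequency region of dimension $r-4$ rather than the dimension $r$ available for the paraboloid in Theorem \ref{Main}, whence the admissible range $p>2+\frac{8}{r-4}$ and the requirement $r-4\geq 1$.

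I expect the main obstacle to be exactly this adaptation of the $\T^r$-analysis from the paraboloid of Theorem \ref{Main} to the sphere while keeping the exponent loss-free: the sphere carries no translation structure, so the exponential sums must be estimated through the arithmetic of sums of $\geq 5$ squares and an $\varepsilon$-removal that is delicate right at the threshold $r=5$. The Lie-theoretic reduction via Clerc's formula, and the uniform control of spherical functions away from the torus that it provides, are essentially inherited from the proof of Theorem \ref{Main}; the genuinely new work lies in the discrete harmonic analysis of spheres of lattice points.
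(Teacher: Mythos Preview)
Your proposal has a genuine gap in the reduction step, and the paper takes a different (and shorter) route.

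\textbf{The gap.} You write that Clerc's formula expresses $\varphi_\lambda$ restricted to the maximal torus as a Weyl-group superposition of plane waves, and that together with the Weyl integration formula this reduces $\|f\|_{L^p(M)}$ to a weighted $L^p(\T^r)$-norm of a trigonometric polynomial. Neither step is correct as stated. Clerc's formula \eqref{Clerc's} is an integral over $K$, not a finite Weyl-group sum; it expresses $\varphi_\lambda(u)$ as $\int_{K_u} e^{\lambda(\mathcal{H}(uk))}\,dk$ with a complex phase whose real part is nonpositive, and this is used in the paper to bound \emph{kernels} pointwise, not to restrict functions to the torus. More seriously, the Weyl-type integration formula that collapses $\int_{U/K}$ to $\int_A |\delta|^2$ applies only to $K$-invariant functions, and a generic Laplace eigenfunction $f$ is not $K$-invariant. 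Your ``Bernstein-type factor $N^{(d-r)(1/2-1/p)}$'' coming from $\dim V_\lambda$ and the vanishing of $\delta$ is not justified by any argument in the proposal; there is no mechanism offered for passing from $\|f\|_{L^p(M)}$ to a torus norm of the coefficient sequence.

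\textbf{What the paper actually does.} The paper never reduces to a torus restriction estimate. It first applies $TT^*$: with $\mathcal{K}_N=\sum_{|\mu|_\rho=N} d_\mu\varphi_\mu$ the (bi-$K$-invariant) projector kernel, one has $f=f*\mathcal{K}_N$, and it suffices to prove $\|f*\mathcal{K}_N\|_{L^p}\lesssim N^{d-2-2d/p}\|f\|_{L^{p'}}$. The key identity is
\[
\mathcal{K}_N=\frac{1}{\mathcal{T}}\int_0^{\mathcal{T}}\mathscr{K}_N(t,\cdot)\,e^{itN^2}\,dt,
\]
expressing the spectral projector as a time average of the mollified Schr\"odinger kernel. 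One then Farey-dissects the $t$-circle exactly as in the proof of Theorem~\ref{Main} and uses the \emph{same} pointwise bound \eqref{KNtH} on $\mathscr{K}_N$ (obtained via Clerc's formula and Weyl differencing). The only change from Theorem~\ref{Main} is that integrating over the arc $\mathcal{M}_{Q,L}$ of length $\asymp Q^2/(NL)$ multiplies the $L^\infty$ kernel bound by that factor, shifting the $Q$-exponent from $-r/2$ to $-r/2+2$. Summability in $Q$ then forces $(-r/2+2)(1-\theta)+2\theta<0$, i.e.\ $p>2+8/(r-4)$ and $r\geq 5$. No separate sphere-restriction estimate on $\T^r$ is needed; the arithmetic you anticipated is already encoded in the Weyl-sum bound \eqref{kappaN}.

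So your diagnosis of \emph{why} $r\geq 5$ and $p>2+8/(r-4)$ appear is correct in spirit, but the proof is not a transfer to $\T^r$ followed by Bourgain's sphere estimate; it is a direct recycling of the Schr\"odinger kernel bound through a time integral.
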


We conjecture however that the Strichartz estimate \eqref{Stri} should hold for any $p>\frac{2(d+2)}{d}$ on compact globally symmetric spaces of rank at least two, and the above eigenfunction bound \eqref{eigen} should hold for any $p>\frac{2d}{d-2}$. 
Because of the scale-invariant nature of the Strichartz estimates established in Theorem \ref{Main}, we are able to provide the following local well-posedness result for the NLS that covers all subcritical regularities, in a range of degree of the nonlinearity. 

\begin{thm}
Suppose in \eqref{Cauchyproblem}, $F(u)$ is a polynomial function in $u$ and its complex conjugate $\bar{u}$ of degree $\beta$ such that $F(0)=0$. Then on any compact globally symmetric space of dimension $d$ and rank $r$ and for any 
$\beta\geq 3+8/r$, the Cauchy problem
\eqref{Cauchyproblem} is uniformly locally well-posedness for all $s>s_c=\frac{d}{2}-\frac{2}{\beta-1}$.
\end{thm}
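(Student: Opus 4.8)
The plan is to realize the solution as a fixed point of the Duhamel operator $\Phi$ in an appropriate Bourgain-type space adapted to the scale-invariant Strichartz estimates of Theorem \ref{Main}. First I would set up the functional framework: for a parameter $b>1/2$ close to $1/2$, work in the restriction space $X^{s,b}([-T,T])$ whose norm measures $\|\langle \tau + |\xi|^2\rangle^b \widehat{u}\|$ in the appropriate sense on $M$, using the spectral decomposition of $\Delta$. The key linear input is that Theorem \ref{Main} transfers, by the standard $X^{s,b}$ transference principle, to the estimate $\|u\|_{L^p([-T,T]\times M)} \lesssim \|u\|_{X^{s,b}}$ for every $p \geq 2 + 8/r$ with $s = d/2 - (d+2)/p$ and $b > 1/2$; interpolating this with the trivial $L^2_{t,x}$ bound yields a family of estimates $\|u\|_{L^q([-T,T]\times M)} \lesssim \|u\|_{X^{\sigma,b}}$ for a continuum of exponents $q \in [2, \infty)$ with $\sigma = d/2 - (d+2)/q$ whenever $q \geq 2 + 8/r$, and also for smaller $q$ with an additional $\sigma$ (losing scaling, which is harmless in the subcritical regime).

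The heart of the argument is the multilinear estimate controlling $\|F(u)\|_{X^{s,b-1}([-T,T])}$, or rather, after the usual dualization, a product estimate. Since $F$ is a polynomial of degree $\beta$ with $F(0)=0$, by multilinearity and symmetry it suffices to bound, for $k \leq \beta$, products $\prod_{j=1}^{k} u_j^{(*)}$ (some factors conjugated) in the space $X^{s, b-1}$ dual-paired against a test function $v \in X^{-s, 1-b}$, reducing matters to a spacetime estimate $\big| \int_{[-T,T]\times M} v \prod_{j=1}^{k} u_j^{(*)} \big| \lesssim T^{\theta} \|v\|_{X^{-s,1-b}} \prod_j \|u_j\|_{X^{s,b}}$ for some $\theta > 0$. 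I would perform a Littlewood–Paley decomposition of all $k+1$ factors, placing the highest two frequencies together and the rest in suitable $L^q$ spaces via Hölder in spacetime, then invoke the interpolated Strichartz estimates above on each dyadic piece. The subcriticality hypothesis $s > s_c = d/2 - 2/(\beta-1)$, equivalently $\beta > 1 + \frac{4}{d-2s}$, combined with the assumption $\beta \geq 3 + 8/r$ (which guarantees the relevant Strichartz exponents lie in the admissible range $p \geq 2 + 8/r$), should produce a genuine power gain in the largest frequency, allowing the dyadic sums to converge and leaving a positive power of $T$ from the time-localization (or from the high-low frequency gap). This yields that $\Phi$ is a contraction on a ball in $X^{s,b}([-T,T])$ for $T$ small depending on $\|u_0\|_{H^s}$.

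From the contraction one extracts, in the standard way, existence and uniqueness of the fixed point $u \in X^{s,b}([-T,T]) \hookrightarrow C([-T,T], H^s(M))$; Lipschitz (hence uniform) dependence on $u_0$ follows from running the contraction argument on the difference of two solutions; and persistence of higher regularity $H^{s'}$ for $s' > s$ follows by the same multilinear estimates with $s$ replaced by $s'$, using that the contraction time $T$ can be taken uniform over bounded sets of the $H^s$-norm. One technical point is that when $\beta$ is even the nonlinearity is not smooth at the level of $u \mapsto |u|^{\beta-1}u$; but the theorem is stated for polynomial $F$ in $u, \bar u$, so this issue does not arise and all manipulations are genuinely multilinear.

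The main obstacle I expect is the multilinear frequency analysis in the critical-looking borderline cases: one must verify that, for every partition of the $k+1$ dyadic frequencies and every assignment of Strichartz exponents consistent with Hölder's inequality, the resulting exponent of the top frequency is strictly negative under the hypotheses $\beta \geq 3 + 8/r$ and $s > s_c$. Because Theorem \ref{Main} only gives scale-invariant Strichartz for $p \geq 2 + 8/r$ rather than the full conjectured range $p > 2(d+2)/d$, the admissible exponents are more restricted than on the torus, so the bookkeeping must be done carefully — in particular one wants to route the two highest frequencies through an $L^2_{t,x}$ bilinear-type pairing (controlled by a single scale-invariant Strichartz at $p = 4$ applied to each, valid as soon as $4 \geq 2 + 8/r$, i.e. $r \geq 4$, and otherwise by a lossy estimate absorbed into the subcritical gap) and the remaining $k-1$ low-frequency factors plus the test function through high-exponent Strichartz spaces where the restriction $p \geq 2 + 8/r$ is comfortably satisfied. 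Establishing that this scheme closes for all admissible $\beta$, including small rank $r$ where $8/r$ is large, is where the bulk of the technical work lies.
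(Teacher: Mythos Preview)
Your approach via $X^{s,b}$ spaces and a multilinear Littlewood--Paley analysis is viable, but the paper takes a much more elementary and direct route through plain Lebesgue--Sobolev spaces, bypassing $X^{s,b}$ entirely for this result. The paper's argument (adapting \cite[Proposition~3.1]{BGT04}) is simply this: given $s>s_c$ and $\beta-1\geq 2+8/r$, pick a single exponent $p>\max(\beta-1,\,2+8/r)$ close enough to $\beta-1$ that $s>\frac{d}{2}-\frac{2}{p}$; then $\sigma:=s-\bigl(\frac{d}{2}-\frac{d+2}{p}\bigr)>\frac{d}{p}$, so by Sobolev embedding $W^{\sigma,p}(M)\hookrightarrow L^\infty(M)$. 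One runs the contraction directly in
\[
Y_T=C([-T,T],H^s)\cap L^p([-T,T],W^{\sigma,p}),
\]
where the scale-invariant Strichartz estimate of Theorem~\ref{Main} (applied to $(-\Delta)^{\sigma/2}f$) controls the linear flow in $L^p_tW^{\sigma,p}_x$, and the Duhamel term is bounded by H\"older in time (yielding the small factor $T^{1-(\beta-1)/p}>0$) together with the fact that $W^{\sigma,p}$ with $\sigma>d/p$ is an algebra. No dyadic decomposition, no transference to $X^{s,b}$, and no multilinear frequency bookkeeping are needed.

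The trade-off: your $X^{s,b}$ scheme is the right one when the Strichartz estimates are \emph{not} scale-invariant or when one is at the critical regularity (indeed the paper reserves $X^{s,b}$ and $U^p,V^p$ spaces precisely for those situations, in the cubic/bilinear sections). But here the scale-invariant estimate and the strict subcriticality $s>s_c$ make the simple Lebesgue-space argument available, and all the concerns you raise at the end---routing the two top frequencies through a bilinear $L^2$ pairing, the restriction $p\geq 2+8/r$ interfering at small rank, closing the dyadic sums---simply do not arise.
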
 

Note that the above theorem does not cover the cubic nonlinearity $\beta=3$, for which we provide another approach via bilinear Strichartz estimates and give local well-posedness results for both subcritical and critical regularities. We will establish the following bilinear Strichartz estimates on certain compact globally symmetric spaces. 

\begin{thm}
Suppose $f_i\in L^2(M)$ is spectrally localized in the band $[N_i,2N_i]$, $i=1,2$. \\
(i) Suppose either $M=M_1\times\cdots\times M_r$ is a product of rank-one compact globally symmetric spaces such that $r\geq 2$, or $M=\T^{r_0}\times M_1\times\cdots\times M_{r-r_0}$ is a product of a rational $r_0$-dimensional torus $\T^{r_0}$ and rank-one spaces such that $r\geq 3$, and in both cases we assume that each $M_i$ has the dimension at least 3. Then  
$$\|e^{it\Delta}f_1\ e^{it\Delta}f_2\|_{L^2(I\times M)}\leq C_\varepsilon\min(N_1,N_2)^{\frac{d}{2}-1+\varepsilon} \|f_1\|_{L^2(M)}\|f_2\|_{L^2(M)}.$$
(ii) Suppose either $M=M_1\times\cdots\times M_r$ is a product of rank-one spaces, or $M=\T^{r_0}\times M_1\times\cdots\times M_{r-r_0}$ is a product of a rational torus $\T^{r_0}$ and rank-one spaces, and in both cases we assume $r\geq 3$ and that each component $M_i$ has the dimension at least 4. Suppose $N_1\geq N_2$. Then for some $\delta>0$ 
$$\|e^{it\Delta}f_1\ e^{it\Delta}f_2\|_{L^2(I\times M)}\leq C\left(\frac{N_2}{N_1}+\frac{1}{N_2}\right)^{\delta} N_2^{\frac{d}{2}-1} \|f_1\|_{L^2(M)}\|f_2\|_{L^2(M)}.$$
\end{thm}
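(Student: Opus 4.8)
The plan is to follow the scheme of Bourgain on the torus and of Herr, Tataru and Tzvetkov on the three-sphere, exploiting the product structure. The starting point is the arithmetic of the spectrum: on a rank-one compact globally symmetric space $M_j$ of dimension $d_j$ the distinct eigenspaces $V^{(j)}_k$ of $-\Delta$ are labelled by an integer $k\geq 0$, with eigenvalue a fixed quadratic polynomial $Q_j(k)=a_j(k+\rho_j)^2+c_j$, $a_j>0$; after rescaling the metric on each factor by a rational constant, which changes none of the statements, all $a_j$ are positive integers and all $\rho_j\in\tfrac12\Z$, so $e^{it\Delta}$ on $M$ is periodic in $t$ and one may take $I=\T$. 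A rational torus factor $\T^{r_0}$ contributes in the same way a positive-definite rational quadratic form in $r_0$ integer variables. Writing $f_i=\sum_{\vec k}f_{i,\vec k}$ for the decomposition along the joint eigenspaces $\bigotimes_j V^{(j)}_{k_j}$ of the factors, $Q(\vec k)=\sum_j Q_j(k_j)$ for the corresponding eigenvalue of $-\Delta$ on $M$, and applying Plancherel in $t\in\T$, one obtains
\[
\|e^{it\Delta}f_1\ e^{it\Delta}f_2\|_{L^2(\T\times M)}^2=\sum_{\mu}\Big\|\sum_{Q(\vec k)+Q(\vec l)=\mu}f_{1,\vec k}\,f_{2,\vec l}\Big\|_{L^2(M)}^2 .
\]

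The next step is to expand the inner $L^2(M)$-norm and organise the resulting sum over quadruples $(\vec k,\vec l,\vec k',\vec l')$ with $Q(\vec k)+Q(\vec l)=Q(\vec k')+Q(\vec l')$, exploiting two independent sources of orthogonality. First, on each factor $M_j$ the product $f_{1,\vec k}\,\overline{f_{1,\vec k'}}$ has frequency supported, by the Clebsch--Gordan-type decomposition of $V^{(j)}_{k_j}\otimes\overline{V^{(j)}_{k_j'}}$, among eigenvalue-indices between $|k_j-k_j'|$ and $k_j+k_j'$; pairing against $f_{2,\vec l}\,\overline{f_{2,\vec l'}}$ then forces, for every $j$, $|k_j-k_j'|\lesssim l_j+l_j'$, so that when $N_1\geq N_2$ the index $\vec k'$ is confined to an $O(N_2)$-neighbourhood of $\vec k$. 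Second, once $\vec k'$ lies near $\vec k$, the diophantine identity $Q(\vec k)-Q(\vec k')=Q(\vec l')-Q(\vec l)$ reads $\sum_j a_j(k_j-k_j')(k_j+k_j'+2\rho_j)=O(N_2^2)$, which is a genuine restriction since each coefficient $k_j+k_j'+2\rho_j$ has size $\sim N_1$. I would bound the individual inner products by $\|f_{1,\vec k}f_{2,\vec l}\|_{L^2(M)}\|f_{1,\vec k'}f_{2,\vec l'}\|_{L^2(M)}$ and control these through the refined bilinear eigenfunction estimate on each rank-one factor — for the spheres this is the estimate of Burq, G\'erard and Tzvetkov, and the analogous bound holds on the other rank-one spaces of dimension $\geq 3$, giving that $g\otimes h\mapsto gh$ from $V^{(j)}_k\otimes V^{(j)}_l$ to $L^2(M_j)$ has operator norm $\lesssim\min(k,l)^{(d_j-2)/2}$ — tensorised over $j$ to produce a factor $\prod_j\min(k_j,l_j)^{(d_j-2)/2}\leq N_2^{d/2-r}$. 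What then remains is a pure counting problem for the radial indices, and the remaining power of $N_2$ is supplied by the bound on the number of representations of an integer by a positive-definite diagonal quadratic form in at least three variables (the rational torus factor keeping the form commensurable with an integral one), combined with the near-diagonal and diophantine restrictions above; this requires the number $r$ of factors — the torus factor counted with multiplicity $r_0$ — to be at least two (respectively three when a torus factor is present). Running the two Cauchy--Schwarz steps with these counts yields part (i), in fact with an automatic $(N_2/N_1)^{1/2}$ improvement that is harmless to discard.

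For part (ii) the point is to sharpen this bookkeeping. When $N_1\gg N_2$, balancing the $O(N_2)$ near-diagonal bound against the diophantine identity $\sum_j a_j(k_j-k_j')(k_j+k_j'+2\rho_j)=O(N_2^2)$ cuts the admissible $\vec k'$ by a factor $N_2/N_1$, which already produces the $(N_2/N_1)^{\delta}$ term. When $N_1\sim N_2$ one needs, in addition, to remove the $\varepsilon$-loss and gain a further $(1/N_2)^{\delta}$; this comes from a transversality improvement in the counting — an improved estimate for lattice points on one ellipsoid lying in a thin shell around a second — together with a quantitative gain in the bilinear eigenfunction estimate on each factor, and it is at this stage that the stronger hypothesis $d_j\geq 4$ is used, so as to have clean, logarithm-free versions of these inputs and enough variables for the sharper representation-number bounds. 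Interpolating the two regimes gives the factor $(N_2/N_1+1/N_2)^{\delta}$.

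The main obstacle, I expect, is precisely this interplay between the spatial eigenspace multiplicities and the arithmetic of the quadratic forms: applying the tensorised bilinear eigenfunction estimate and then Cauchy--Schwarz carelessly over the radial indices overcounts and leaves powers of $N_1=\max(N_1,N_2)$, whereas the final bound must involve only $N_2=\min(N_1,N_2)$. Making the two orthogonality mechanisms cooperate in the right order — the angular Clebsch--Gordan constraint localising $\vec k'$ near $\vec k$ \emph{before} the diophantine equation is invoked, and the representation-number bounds then applied with full uniformity in $\mu$ — is the technical heart of the argument, and its quantitative refinement for part (ii), in particular the transversality gain underlying the hypothesis $d_j\geq 4$, is the most delicate point.
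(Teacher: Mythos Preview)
Your plan for part (i) is sound and essentially matches the paper's argument: Plancherel in $t$, the Clebsch--Gordan localisation (the paper packages this as a Fourier-support lemma and a decomposition of $\Lambda^+$ into $N_2$-cubes \`a la Herr--Tataru--Tzvetkov), the tensorised bilinear eigenfunction bound on rank-one factors, and then the divisor/representation-number bound for quadratic forms in $r$ variables. The quadruple expansion you describe is just an unwinding of the paper's cube-almost-orthogonality followed by Cauchy--Schwarz.

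For part (ii), however, there is a real gap. Your scheme rests entirely on counting (``an improved estimate for lattice points on one ellipsoid lying in a thin shell around a second \ldots\ sharper representation-number bounds''), but the $\varepsilon$-free representation bound $\#\{Q(\vec x)=n\}\lesssim n^{r/2-1}$ requires $r\geq 5$, not $r\geq 3$; for $r=3,4$ one cannot avoid the $n^{\varepsilon}$ loss by arithmetic alone. The paper handles $r\geq 3$ by a genuinely different mechanism (its ``Approach 2''): after isolating the radial variables as you do, it does \emph{not} count, but instead rewrites the remaining sum as a product of two exponential sums in $t$, inserts an extra spatial variable to land on $\T\times\T^{r}$, and invokes the scale-invariant Strichartz estimate on rational tori (Bourgain--Demeter, with the $\varepsilon$ removed by Killip--Visan). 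The condition $r\geq 3$ is exactly what is needed for that torus Strichartz estimate to be available at some exponent $p<4$. This reduction to torus Strichartz is the missing idea in your proposal.

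Two further points of detail: the hypothesis $d_j\geq 4$ is used \emph{only} to get the $\varepsilon$-free bilinear eigenfunction bound on each rank-one factor (the analogue of Burq--G\'erard--Tzvetkov in dimension $\geq 4$); it plays no role in the lattice counting, contrary to what you write. And the $(1/N_2)^{\delta}$ in the final bound does not arise from the regime $N_1\sim N_2$ --- there the factor $(N_2/N_1+1/N_2)^\delta$ is $\asymp 1$ and nothing beyond the $\varepsilon$-removal is required --- but from the opposite extreme $N_1>N_2^2$, where the width of the diophantine strip $\{\xi:\xi\cdot\xi_j/|\xi_j|\in[kM,(k+1)M)\}$ saturates at $M=1$.
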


The approaches of multi-linear Strichartz estimates as in \cite{BGT05, BGT05m, Her13, HS15} all explored the multi-linear bounds for the Laplace-Beltrami operator, which work well for spheres. However, this is only typical for the rank-one case, but on higher-rank spaces, the full ring of invariant differential operators should play the role of the Laplace-Beltrami operator on rank-one spaces. So instead we establish the above results by establishing  bilinear bounds for joint eigenfunctions of the ring of invariant differential operators on $M$, and the above limitation on $M$ is due to our inability to establish such bilinear joint eigenfunction bounds in full generality. As consequences of the above theorem, we derive local well-posedness results for the cubic NLS as follows. 

\begin{thm}
Suppose in \eqref{Cauchyproblem}, $F(u)$ equals any of $\pm|u|^2u$, $\pm u^3$, $\pm |u|^2\bar{u}$, $\pm \bar{u}^3$. Then on such $M$ as assumed in (i) of the above theorem, the Cauchy problem \eqref{Cauchyproblem} is locally well-posedness for all $s>s_c=\frac{d}{2}-1$, and on such $M$ as assumed in (ii) of the above theorem, the Cauchy problem is locally well-posedness for $s=s_c$. 
\end{thm}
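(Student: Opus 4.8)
The plan is to deduce local well-posedness for the cubic NLS from the bilinear Strichartz estimates of the preceding theorem by running a contraction-mapping argument on the Duhamel operator $\Phi(u)=e^{it\Delta}u_0-i\int_0^te^{i(t-s)\Delta}F(u)\,ds$ in the appropriate function-space framework. For the subcritical case (part (i), all $s>s_c=d/2-1$), I would work in a Bourgain-type $X^{s,b}$-space on a short time interval $I=[-T,T]$, with $b>1/2$, and exploit the standard fact that the $X^{s,b}$ norm dominates a corresponding bilinear space-time norm so that the estimate from part (i) of the bilinear theorem transfers from free evolutions $e^{it\Delta}f_i$ to general $X^{s,b}$-functions. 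For the critical case (part (ii), $s=s_c$), the $X^{s,b}$-spaces with $b>1/2$ fail the needed endpoint embeddings, so I would instead use the $U^p$- and $V^p$-type spaces $X^s$, $Y^s$ of Herr--Tataru--Tzvetkov and Herr, which are tuned precisely to absorb a bilinear estimate carrying a small gain $(N_2/N_1+1/N_2)^\delta$ off the diagonal; the gain in part (ii) of the bilinear theorem is exactly of this shape, which is why (ii) requires the stronger hypotheses on the $M_i$.

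The key steps, in order, are: (1) perform the dyadic Littlewood--Paley decomposition $u=\sum_N P_N u$ (and similarly for each factor of the cubic nonlinearity), reducing the trilinear estimate for $\int_I\!\int_M (P_{N_1}u_1)(P_{N_2}u_2)(P_{N_3}u_3)\,\overline{v}$ to a sum over dyadic frequency tuples; (2) in each dyadic block, by symmetry order $N_1\ge N_2\ge N_3$, pair the highest frequency with the test function $v$ and the other two together, apply Cauchy--Schwarz in space-time, and invoke the bilinear estimate on the two pairs $\|e^{it\Delta}f_{N_1}\,e^{it\Delta}f_{N_3}\|_{L^2_{t,x}}$ and $\|e^{it\Delta}f_{N_2}\,e^{it\Delta}g_{N}\|_{L^2_{t,x}}$ — this is the analogue of the passage from \eqref{trilinearStri} to the cubic nonlinear estimate in \cite{HTT11, Her13}; (3) sum the resulting dyadic bounds: in the subcritical case the extra $\varepsilon$-loss $\min(N_i)^\varepsilon$ and the gap $s>s_c$ give a geometric series in the ratios of the frequencies after one uses $\varepsilon$ small relative to $s-s_c$, and a small power of $T$ is extracted from the $X^{s,b}$ time-localization to make $\Phi$ a contraction; in the critical case the off-diagonal gain $\delta$ from part (ii) supplies the summation over the two non-maximal frequencies while the scale invariance handles the maximal one, and smallness comes from shrinking $T$ (for large data) or shrinking the data (for small-data global results); (4) verify the remaining contraction-mapping ingredients — the linear estimate $\|e^{it\Delta}u_0\|_{X^s(I)}\lesssim\|u_0\|_{H^s}$, the inhomogeneous (energy) estimate for the Duhamel term, Lipschitz bounds on the nonlinear map obtained by multilinearity and the same trilinear estimate applied to differences, uniqueness, continuous dependence, and persistence of regularity, all of which are by-now-standard once the trilinear estimate is in hand.

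The main obstacle I anticipate is step (2)--(3): establishing the full \emph{trilinear} space-time estimate with a summable dependence on the dyadic frequencies, starting only from a \emph{bilinear} estimate. One must check that after the Cauchy--Schwarz splitting, the product of the two bilinear bounds reproduces exactly the power $N_2 N_3$ (up to the admissible loss) that scale invariance demands for the cubic term at regularity $s_c=d/2-1$; concretely, $\min(N_1,N_3)^{d/2-1}\cdot\min(N_2,N)^{d/2-1}$ with $N\sim N_1$ must be reorganized, via $N_1\ge N_2\ge N_3$, into $N_2^{d/2-1}N_3^{d/2-1}$ times a factor that is either an $\varepsilon$-power of the smallest frequency (case (i)) or a genuine negative power of a frequency ratio (case (ii)). Handling the case where one of the three inputs is the high-frequency-paired-with-$v$ factor versus the cases dictated by which $N_i$ is largest requires care, as does the treatment of the possibly non-generic frequency interactions on the torus factor $\T^{r_0}$ when $M$ has a Euclidean component. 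The critical case additionally demands the bookkeeping that the $\delta$-gain is not consumed by the logarithmically-many dyadic scales, which is the reason the hypotheses in part (ii) are strictly stronger; making this quantitative is where most of the work lies, but it follows the template of \cite{HTT11} and \cite{Her13} closely enough that no genuinely new idea beyond the bilinear estimates should be needed.
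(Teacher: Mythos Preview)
Your proposal is correct and follows essentially the same approach as the paper. In fact, the paper's own proof is considerably terser than yours: it simply invokes two propositions (one for the subcritical case via $X^{s,b}$-spaces citing \cite{BGT05,Bou93,GOW14}, one for the critical case via $U^p$-, $V^p$-spaces citing \cite{HTT11,Her13,HS15}) which state that the bilinear Strichartz estimates \eqref{subcriticalbilinearStrichartz} and \eqref{bilineardelta} imply the respective well-posedness results by now-standard machinery, and then observes that Theorem~\ref{Main2} supplies exactly those bilinear inputs. Your outline of the dyadic decomposition, Cauchy--Schwarz pairing, and frequency summation is precisely the content of that cited machinery, so you have unpacked what the paper leaves to references.
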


The organization of the rest of this paper is as follows. We review harmonic analysis on compact globally symmetric spaces in Section \ref{Clerc}. In particular we review Clerc's formula of spherical functions in Section \ref{structure} and \ref{spherical}, and obtain a Fourier support lemma for a product of joint eigenfunctions in Section \ref{support}. Then we prove the linear Strichartz estimates in Section \ref{linearStri}. In Section \ref{bilinear}, we derive bilinear Strichartz estimates. In Section \ref{fun}, we apply these Strichartz estimates to local well-posedness results. 

For notations, we use $A\lesssim B$ to mean $A\leq CB$ for some positive constant $C$, $A\lesssim_x B$ to mean $A\leq C(x)B$ for some positive constant $C(x)$ depending on $x$, and $A\asymp B$ to mean $|A|\lesssim |B|\lesssim |A|$.

\section{Analysis on compact globally symmetric spaces}\label{Clerc}

In this section, we review the structure of and harmonic analysis on compact globally symmetric spaces.

\subsection{Structure theory}\label{structure}
Let $\mathfrak{g}$ be a real semisimple Lie algebra with the Cartan decomposition 
$$\mathfrak{g}=\mathfrak{k}+\mathfrak{p}$$
associated to a Cartan involution $\theta$ of $\mathfrak{g}$. 
Let $\mathfrak{a}\subset\mathfrak{p}$ be a maximal abelian subspace. Let $\mathfrak{g}_\C$ be the complexification of $\mathfrak{g}$. If $\mathfrak{z}$ is a subalgebra of $\mathfrak{g}$, let $\mathfrak{z}_\C$ denote its complexification in $\mathfrak{g}_\C$. 
Let 
$$\mathfrak{u}=\mathfrak{k}+ i\mathfrak{p}.$$
Let $\mathfrak{a}^*$ denote the real dual of $\mathfrak{a}$, and let $\Sigma\subset\mathfrak{a}^*$ denote the restricted root system with respect to $(\mathfrak{g},\mathfrak{a})$ and $\Sigma^+$ the set of positive roots. For $\alpha\in\Sigma$, let $\mathfrak{g}_\alpha$ denote the associated root subspace, and let 
$$\mathfrak{n}=\sum_{\alpha\in\Sigma^+}\mathfrak{g}_\alpha.$$
Let $\mathfrak{m}\subset\mathfrak{k}$ be the centralizer of $\mathfrak{p}$ in $\mathfrak{k}$, and let $\mathfrak{d}$ be a maximal abelian subspace of $\mathfrak{m}$. Then 
$$\mathfrak{h}=\mathfrak{d}+i\mathfrak{a}$$ is a Cartan subalgebra of $\mathfrak{u}$. Let $G_\C$ be the simply connected complex Lie group with Lie algebra $\mathfrak{g}_\C$ and let $G,K,A,N,K_\mathbb{C}, A_\mathbb{C},N_\mathbb{C},U$ be the analytic subgroups corresponding to the subalgebras $\mathfrak{g},\mathfrak{k},\mathfrak{a},\mathfrak{n},\mathfrak{k}_\C,\mathfrak{a}_\C,\mathfrak{n}_\C,\mathfrak{u}$ respectively. 

\begin{defn}\label{defncptsym}
$U/K$ is called a symmetric space of compact type. A compact Riemannian manifold $M$ is said to be a compact globally symmetric space, or compact symmetric space in short, if $M$ is a symmetric space of compact type, or more generally, if $M$ is a product of a symmetric space of compact type and a torus. We equip symmetric spaces of compact type the canonical metric induced from the Killing form, and we assume the tori are of the form $\R^{r_0}/2\pi \Z^{r_0}$ equipped with a rational metric such that $(\xi,\eta)\in\mathbb{Q}$ for any $\xi,\eta\in\Z^{r_0}$. 
\end{defn}

Let 
$$F=\{a\in A_\C:\ a^2=e\}.$$


We have the Iwasawa decomposition 
$$K\times A\times N\xrightarrow{\sim} G, \ (k,a,n)\mapsto kan.$$
Consider its complexification
$$K_\C\times A_\C\times N_\C\to G_\C, \ (k,a,n)\mapsto kan.$$
This map is however neither injective nor surjective. The following two theorems clarify this matter. Let $\omega\subset G_\C$ be the image of this multiplication map. For $g\in G$, let $k(g)\in K$, $a(g)\in A$, $n(g)\in N$ denote the elements in the Iwasawa decomposition $g=k(g)a(g)n(g)$. 

\begin{thm}\cite[Theorem 1.10]{Cle88} \t{(See also }\cite{SW02}\t{)}
$\omega$ is a dense open subset of $G_\C$. 
\end{thm}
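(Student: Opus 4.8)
The plan is to prove openness by an infinitesimal computation and density by an algebraic (constructibility) argument; density is the only genuinely nontrivial point.

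First I would record the infinitesimal Iwasawa decomposition. The real decomposition $\mathfrak{g}=\mathfrak{k}\oplus\mathfrak{a}\oplus\mathfrak{n}$ is a direct sum of vector spaces, so complexifying gives $\mathfrak{g}_\C=\mathfrak{k}_\C\oplus\mathfrak{a}_\C\oplus\mathfrak{n}_\C$. Hence the multiplication map $\mu\colon K_\C\times A_\C\times N_\C\to G_\C$, $(k,a,n)\mapsto kan$, has differential at $(e,e,e)$ equal to the addition map $\mathfrak{k}_\C\oplus\mathfrak{a}_\C\oplus\mathfrak{n}_\C\to\mathfrak{g}_\C$, a linear isomorphism; thus $\mu$ is a local biholomorphism near $(e,e,e)$. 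To propagate this over the whole source, note that for $a\in A_\C$ and small $X\in\mathfrak{k}_\C$, $Y\in\mathfrak{a}_\C$, $Z\in\mathfrak{n}_\C$ one has $\mu(\exp X,(\exp Y)a,\exp Z)=\exp X\,\exp Y\,\exp(\mathrm{Ad}(a)Z)\,a$, and since $\mathrm{Ad}(a)$ preserves $\mathfrak{n}_\C$ and is invertible there, $d\mu_{(e,a,e)}$ is again an isomorphism. Finally, because $\mu(k_0k,a,nn_0)=k_0\,\mu(k,a,n)\,n_0$ for $k_0\in K_\C$, $n_0\in N_\C$, and left/right translations are biholomorphisms of $G_\C$ while $(k,a,n)\mapsto(k_0k,a,nn_0)$ is one of the source, $\mu$ is a local biholomorphism at every point. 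In particular $\mu$ is an open map and $\omega=\mathrm{im}\,\mu$ is open. (The fibres of $\mu$ are cosets of $K_\C\cap A_\C N_\C$, which is discrete since $\mathfrak{k}_\C\cap(\mathfrak{a}_\C\oplus\mathfrak{n}_\C)=0$; this is not needed here but is the source of the non-injectivity clarified by the next theorem.)

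For density — the main obstacle, since an open set need not be dense — I would invoke the algebraic structure. Realize $G_\C$ as a connected linear complex algebraic group; then $A_\C$ and $N_\C$ are closed algebraic subgroups, and $K_\C$, being the complexification of the compact group $K$ (equivalently, the fixed-point subgroup of the holomorphic involution of $G_\C$ extending the Cartan involution $\theta$), is also algebraic. Hence $\mu$ is a morphism of complex algebraic varieties, and by Chevalley's theorem $\omega$ is a constructible subset of $G_\C$, say $\omega=\bigcup_{i=1}^m(U_i\cap Z_i)$ with $U_i$ nonempty Zariski-open and $Z_i$ Zariski-closed. If every $Z_i$ were a proper subvariety then each $U_i\cap Z_i$ would be Euclidean-nowhere-dense, contradicting that $\omega$ has nonempty Euclidean interior; so some $Z_i=G_\C$, and then $\omega$ contains the nonempty Zariski-open set $U_i$, whose complement is a proper subvariety and hence Euclidean-nowhere-dense. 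Therefore $\omega$ is dense.

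An alternative, more hands-on route to density uses the Bruhat decomposition of $G_\C$ relative to the parabolic $M_\C A_\C N_\C$: the big cell $\overline{N}_\C M_\C A_\C N_\C$ is open and dense, $M_\C\subseteq K_\C$, and one checks that $\overline{N}_\C\subseteq K_\C A_\C N_\C$ away from a proper analytic subset — already visible for $G_\C=\mathrm{SL}_2(\C)$, where $\left(\begin{smallmatrix}1&0\\ z&1\end{smallmatrix}\right)\in K_\C A_\C N_\C$ precisely when $1+z^2\neq0$ — which again forces $\omega$ to be dense. This is essentially the argument underlying the cited references, but the constructibility argument above is the cleanest to write out.
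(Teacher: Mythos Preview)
The paper does not prove this theorem: it is quoted verbatim from Clerc \cite{Cle88}, with a pointer to the note of Sinton--Wolf \cite{SW02}, and no argument is given in the present paper. So there is no in-paper proof to compare against.

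That said, your proposal is a correct self-contained proof. The openness argument via the differential of $\mu$ and left/right translation is standard and sound. For density, the constructibility route is valid once one knows that $K_\C$, $A_\C$, $N_\C$ are Zariski-closed in the simply connected semisimple algebraic group $G_\C$: you handle $K_\C$ correctly as the fixed-point group of the algebraic involution $\theta$ (connected by Steinberg's theorem since $G_\C$ is simply connected); $A_\C$ is the identity component of the $\theta$-antifixed locus $\{h\in H_\C:\theta(h)=h^{-1}\}$ in a $\theta$-stable maximal torus, hence algebraic; and $N_\C$ is unipotent, hence closed. With that in place, Chevalley plus the nonempty Euclidean interior of $\omega$ forces $\omega$ to contain a nonempty Zariski-open set, and density follows. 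Your alternative via the Bruhat big cell $\overline{N}_\C M_\C A_\C N_\C$ is closer in spirit to what the cited references actually do, but the constructibility argument is cleaner to write and avoids the rank-by-rank verification that $\overline{N}_\C\subset K_\C A_\C N_\C$ off a proper subvariety.
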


\begin{thm}\cite[Theorem 1.11]{Cle88}
The map $g\mapsto n(g)$ is extended to a holomorphic function on $\omega$, with values in $N_\C$. The maps $g\mapsto a(g)$ and $g\mapsto k(g)$ are extended to multivalued holomorphic functions on $\omega$, with values in $A_\C$ and $K_\C$ respectively; for $g_0\in\omega$ and for any branch $a_1$ of $a$ defined in a neighborhood of $g_0$, there is a unique branch $k_1$ of $k$ such that $g=k_1(g)a_1(g)n(g)$. If $a_2$ is another branch and $k_2$ is the corresponding branch, then there exists $d\in F$ such that $k_1=k_2d$ and $a_1=a_2d=da_2$. 
\end{thm}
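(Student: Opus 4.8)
The plan is to analyze the holomorphic multiplication map $\mu\colon K_\C\times A_\C\times N_\C\to G_\C$, $\mu(k,a,n)=kan$, whose image is $\omega$. First I would verify that $\mu$ is a local biholomorphism at every point: using the left-$K_\C$ and right-$N_\C$ equivariance of $\mu$ one reduces to a point $(e,a_0,e)$, where a short computation identifies a right-translate of $d\mu$ with $(X,H,Y)\mapsto X+H+\operatorname{Ad}(a_0)Y$ on $\mathfrak{k}_\C\oplus\mathfrak{a}_\C\oplus\mathfrak{n}_\C$; since $\operatorname{Ad}(a_0)$ preserves $\mathfrak{n}_\C$ and $\mathfrak{g}_\C=\mathfrak{k}_\C\oplus\mathfrak{a}_\C\oplus\mathfrak{n}_\C$ is the complexified Iwasawa splitting, this is an isomorphism. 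Hence $\omega$ is open (density being the preceding theorem), and near any $g_0\in\omega$ the map $\mu$ has a holomorphic local inverse, yielding holomorphic local maps $g\mapsto(k_1(g),a_1(g),n_1(g))$ with $k_1 a_1 n_1=g$.

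The crux is to identify the fibres of $\mu$, which amounts to computing the overlap $K_\C\cap A_\C N_\C$ (note $A_\C N_\C$ is a subgroup, since $A_\C$ normalizes $N_\C$). Suppose $k=an$ with $k\in K_\C$, $a\in A_\C$, $n\in N_\C$. Applying to this identity the holomorphic extension to $G_\C$ of the Cartan involution $\theta$ — which fixes $K_\C$ pointwise, inverts $A_\C$, and carries $N_\C$ onto the opposite group $\overline{N}_\C$ — gives $k=a^{-1}\theta(n)$, hence $\theta(n)=a^2 n\in A_\C N_\C$; but $\theta(n)\in\overline{N}_\C$, and $\overline{N}_\C\cap A_\C N_\C=\{e\}$ by uniqueness of the Bruhat-type factorization in $G_\C$, so $\theta(n)=e$, i.e. $n=e$, whence $a^2=e$. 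Together with $F\subset K_\C$ from the structure theory this yields $K_\C\cap A_\C N_\C=F$, and therefore $\mu(k,a,n)=\mu(k',a',n')$ if and only if $(k',a',n')=(kd,ad,n)$ for some $d\in F$. In particular $\mu$ is an $|F|$-fold holomorphic covering of $\omega$, the $N_\C$-component descends to a single-valued holomorphic map $n\colon\omega\to N_\C$, and the $A_\C$- and $K_\C$-components are well defined up to right multiplication by elements of $F$.

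The rest is bookkeeping. Given a branch $a_1$ of $a$ near $g_0$, the fibre description provides a unique $k_1$ with $g=k_1 a_1 n(g)$ near $g_0$: if $k a_1 n=k'a_1 n'$ then the relating $d\in F$ satisfies $a_1 d=a_1$, so $d=e$, $k=k'$, $n=n'$; and $k_1$ is holomorphic since it comes from a local inverse of $\mu$. For two branches $a_1,a_2$, extended by analytic continuation over $\omega$ — which is connected, being the continuous image of $K_\C\times A_\C\times N_\C$ — one has $a_1(g)=a_2(g)d(g)$ pointwise with $d(g)\in F$; since $F$ is finite and the branches are continuous, $d$ is locally constant, hence a fixed $d\in F$, giving $a_1=a_2 d=d a_2$, and the fibre relation then forces $k_1=k_2 d$. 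Finally, since $G\subset\omega$ and on $G$ the extended maps restrict to the classical Iwasawa components, these multivalued extensions are the ones asserted.

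The step I expect to be the main obstacle is the second one, the identification $K_\C\cap A_\C N_\C=F$: it rests on the uniqueness of the factorization in the open Bruhat cell of the simply connected complex group $G_\C$ (equivalently, $\overline{N}_\C$ meets the complexified minimal parabolic trivially), on the compatibility of the extended involution $\theta$ with all three Iwasawa factors, and on the inclusion $F\subset K_\C$ — each standard but each requiring care — and it is the elimination of $n$ via $\theta(n)=a^2 n$ that ultimately pins down the entire branch structure.
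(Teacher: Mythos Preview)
The paper does not supply a proof of this theorem: it is stated with a citation to \cite[Theorem~1.11]{Cle88} and is used as a black box in the subsequent development of Clerc's formula for spherical functions. There is therefore no in-paper argument to compare your proposal against.

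That said, your sketch is the natural line of attack and mirrors the standard proof in the literature (indeed, essentially the one in Clerc's paper): establish that the complexified Iwasawa multiplication $\mu$ is a local biholomorphism by reducing to the tangent-space decomposition $\mathfrak{g}_\C=\mathfrak{k}_\C\oplus\mathfrak{a}_\C\oplus\mathfrak{n}_\C$, then determine the fibres by computing $K_\C\cap A_\C N_\C$ via the extended Cartan involution and the Bruhat decomposition, and read off the covering and branch structure. Your identification of the delicate point --- the intersection $K_\C\cap A_\C N_\C=F$, requiring $\overline{N}_\C\cap A_\C N_\C=\{e\}$ in the simply connected $G_\C$ and the inclusion $F\subset K_\C$ --- is accurate; these are precisely the structural inputs Clerc invokes. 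The remaining bookkeeping (single-valuedness of $n$, uniqueness of $k_1$ given $a_1$, discreteness forcing a constant $d\in F$) is handled correctly in your outline.
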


By the above theorem, for $g\in \omega$, we let $\mathcal{H}(g)$ denote the multivalued holomorphic function with values in $\mathfrak{a}_\C$, defined by 
$$\exp\mathcal{H}(g)=a(g).$$

\subsection{Clerc's formula of spherical functions}\label{spherical}

Let $(\cdot,\cdot)$ denote the Killing form, then we have the weight lattice
$$
\Lambda=\left\{\lambda\in\mathfrak{a}^*:\ \frac{(\lambda,\alpha)}{(\alpha,\alpha)}\in\Z, \forall \alpha\in\Sigma\right\}
$$
and the subset of dominant weights 
$$
\Lambda^+=\left\{\lambda\in\mathfrak{a}^*:\ \frac{(\lambda,\alpha)}{(\alpha,\alpha)}\in\Z_{\geq 0}, \forall \alpha\in\Sigma^+\right\}.$$
It is well-known \cite[Theorem 4.1 of Chapter V]{Hel00} that $\Lambda^+$ indexes the equivalence classes of irreducible spherical representations as follows. An irreducible representation of $U$ is called spherical if there exists a nonzero vector fixed by $K$.  
Each irreducible spherical representation has a highest weight $\lambda\in i\mathfrak{h}^*$ which vanishes on $\mathfrak{d}$, hence we may view $\lambda$ as lying in $\mathfrak{a}^*$, thus giving an element in $\Lambda^+$.

For $\lambda\in\Lambda^+$, let $\pi_\lambda$ be the corresponding irreducible spherical representation in the inner product space $(V_\lambda,\langle\ ,\ \rangle)$, and let $e_\lambda\in V_\lambda$ be a $K$-invariant unit vector. 
Let
$$\varphi_\lambda(g)=\langle\pi_\lambda(g) e_\lambda,e_\lambda\rangle$$ 
be the corresponding spherical function on $U$. $\varphi_\lambda$ is in fact bi-invariant under $K$, thus it can be considered as a $K$-invariant function on the compact symmetric space $U/K$. 
$\varphi_\lambda$ is extended to a holomorphic function on $G_\C$. Let $dk$ denote the normalized Haar measure on $K$. 

\begin{thm}\cite[Theorem 2.2, Corollary 2.4]{Cle88}\label{Cle88} \t{(Clerc's formula)}
For any $g\in G_\C$,
\begin{align*}
K_g=\{k\in K:\ gk\in\omega\}
\end{align*}
is open in $K$ and $K\setminus K_g$ is of zero measure in $K$. We have   
\begin{align}\label{Clerc's}
\varphi_\lambda(g)=\int_{K_g}e^{\lambda(\mathcal{H}(gk))}\ dk, \ \forall g\in G_\C.
\end{align} 
Moreover, for $u\in U$, 
\begin{align}\label{phase}
\t{Re }\lambda(\mathcal{H}(uk))\leq 0, \ \forall k\in K. 
\end{align}
\end{thm}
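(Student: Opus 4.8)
The plan is to deduce the formula from the case $g \in U$ by analytic continuation, and to prove the $U$-case directly from the definition of the spherical function. First I would verify the measure-theoretic statement: for $g\in G_\C$, the set $\{k\in K: gk\notin\omega\}$ is the preimage under the map $k\mapsto gk$ of the complement of $\omega$, which is a proper analytic subvariety of $G_\C$ by the preceding theorem (the one citing \cite[Theorem 1.10]{Cle88}); since $G_\C$ is connected, $G_\C\setminus\omega$ has measure zero, and the smooth map $k\mapsto gk$ from $K$ into $G_\C$ pulls a measure-zero analytic set back to a measure-zero set, because it is a submersion onto a submanifold of $G_\C$ transverse to the foliation --- or more simply, because $\{k: gk\in\omega\}$ is open and its complement is cut out by the vanishing of a real-analytic function not identically zero on $K$. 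This gives that $K_g$ is open and conull, so the integral in \eqref{Clerc's} makes sense.

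Next I would establish \eqref{Clerc's} for $g = u \in U$. Here the key is the relationship between the Iwasawa projection and the spherical function. For $u\in U$ and $k\in K$, write the complexified Iwasawa decomposition $uk = k(uk)\,a(uk)\,n(uk)$ with $a(uk) = \exp\mathcal{H}(uk)$. The standard integral representation of the spherical function on the \emph{noncompact} dual $G/K$ is $\varphi_{-\lambda-\rho}(g) = \int_K e^{(\lambda+\rho)(\mathcal{H}(gk))}\,dk$ (Harish-Chandra), and Clerc's observation is that the compact spherical function $\varphi_\lambda$ on $U/K$ is obtained from the matrix coefficient $\langle \pi_\lambda(u)e_\lambda, e_\lambda\rangle$ by the same device, using that $\pi_\lambda$ extends holomorphically to $G_\C$ and that $e_\lambda$ is $K$-fixed and of highest weight $\lambda$. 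Concretely: since $\pi_\lambda$ extends to $G_\C$, for $u\in U$ and $k\in K_u$ one has $\pi_\lambda(uk)e_\lambda = \pi_\lambda(uk)e_\lambda$, and decomposing $uk = k(uk)a(uk)n(uk)$ gives $\pi_\lambda(n(uk))e_\lambda \in e_\lambda + (\text{lower weight spaces for } \mathfrak{n})$ paired against the highest weight vector --- wait, more carefully: pairing with the \emph{lowest} weight vector. I would use that $e_\lambda$ decomposes with respect to the $\mathfrak n_\C$-action so that $\langle \pi_\lambda(n)v, e_\lambda\rangle$ picks out the appropriate component, and $\pi_\lambda(a)$ acts on the $\lambda$-weight component by $e^{\lambda(\log a)}$; integrating over $k\in K$ and using $K$-invariance of $e_\lambda$ on the left via $\varphi_\lambda(u) = \int_K \varphi_\lambda(uk)\,dk = \int_K \langle\pi_\lambda(uk)e_\lambda,e_\lambda\rangle\,dk$ yields $\int_{K_u} e^{\lambda(\mathcal{H}(uk))}\,dk$. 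This is the substantive computation and the main obstacle; it requires care with which weight vector one pairs against and with the multivaluedness of $\mathcal H$ (different branches differ by $d\in F$, and $e^{\lambda(\log d)} = \pm 1$, but the branch of $k$ changes compatibly so that the integrand is well-defined as a function on $K_u$ once one fixes how the branches are threaded --- here one uses that $\lambda\in\Lambda$ so $e^{\lambda(\cdot)}$ is consistently defined modulo the branch ambiguity, matching Clerc's Theorem 1.11).

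Having \eqref{Clerc's} on $U$, I would extend it to all of $G_\C$ by analytic continuation: both sides are holomorphic in $g$ on $\omega$ (the left side because $\varphi_\lambda$ extends holomorphically to $G_\C$, the right side because $\mathcal{H}$ extends to a (multivalued) holomorphic $\mathfrak{a}_\C$-valued function on $\omega$ by the preceding theorem, and because for $g$ in a neighborhood the conull set $K_g$ can be controlled so the integral depends holomorphically on $g$ --- one covers $K$ up to measure zero by finitely many open sets on which holomorphic branches of $\mathcal H(gk)$ exist for $g$ near a given point, and differentiates under the integral sign, the boundary contributions vanishing since the exceptional set has measure zero and the integrand stays bounded as $\mathrm{Re}\,\lambda(\mathcal H)\le 0$). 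Since $U$ is a real form of $G_\C$ and $\omega$ is open and dense, a holomorphic function on (a connected component of) $\omega$ vanishing on $U\cap\omega$ vanishes identically, giving \eqref{Clerc's} on $\omega$, hence on all of $G_\C$ by continuity/density. Finally, for the bound \eqref{phase}: for $u\in U$ and $k\in K$, $uk\in U$, and I would show $\|\pi_\lambda(uk)e_\lambda\|\le 1$ since $\pi_\lambda$ is unitary on $U$ and $e_\lambda$ is a unit vector, while on the other hand the Iwasawa decomposition $uk=k(uk)\exp(\mathcal H(uk))n(uk)$ together with the fact that $n(uk)$ fixes (modulo lower terms) and $e_\lambda$ is a highest weight vector forces $\langle \pi_\lambda(\exp\mathcal H(uk))\pi_\lambda(n(uk))e_\lambda, e'\rangle$ to have a component of size $e^{\mathrm{Re}\,\lambda(\mathcal H(uk))}$; comparing with the unitary bound gives $e^{\mathrm{Re}\,\lambda(\mathcal H(uk))}\le 1$, i.e. $\mathrm{Re}\,\lambda(\mathcal H(uk))\le 0$. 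I expect the weight-vector bookkeeping in the $U$-case computation to be the only real difficulty; the analytic continuation and the conullity statement are routine once set up.
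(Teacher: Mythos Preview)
The paper does not give a proof of this theorem: it is stated with a citation to \cite[Theorem 2.2, Corollary 2.4]{Cle88} and used as a black box for the Schr\"odinger kernel analysis. So there is no argument in the present paper to compare your plan against.

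That said, your outline has the right architecture --- prove the identity on a real form where Iwasawa is under control, then analytically continue to $G_\C$, with openness and conullity of $K_g$ coming from $G_\C\setminus\omega$ being a proper analytic set --- but step 2 contains a real confusion that would block the computation as written. You assert that ``$e_\lambda$ is $K$-fixed and of highest weight $\lambda$'' and then try to use how $\pi_\lambda(n)e_\lambda$ sits relative to weight spaces. In a spherical representation the $K$-fixed unit vector $e_\lambda$ and a highest weight vector $v_\lambda$ are \emph{distinct}, and $e_\lambda$ is not a weight vector at all. The clean identity is rather
\[
e^{\lambda(\mathcal H(g))}=\frac{\langle\pi_\lambda(g)v_\lambda,\,e_\lambda\rangle}{\langle v_\lambda,\,e_\lambda\rangle},
\]
since $v_\lambda$ is fixed by $N$ and scaled by $a^\lambda$ under $A$, while $K$-invariance is used on the $e_\lambda$ slot via unitarity of $\pi_\lambda|_K$. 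Integrating over $K$ and using that $\int_K\pi_\lambda(k)v_\lambda\,dk=\langle v_\lambda,e_\lambda\rangle\,e_\lambda$ (orthogonal projection onto the one-dimensional $K$-fixed subspace) yields $\varphi_\lambda(g)$. This computation is cleanest on the noncompact real form $G$, where the Iwasawa decomposition is global and single-valued, with the extension to $G_\C$ (and hence to $U$) by analytic continuation; since the left side above is a globally holomorphic matrix coefficient, this also disposes of your multivaluedness worry without branch bookkeeping. For \eqref{phase} your unitarity argument is fine once aimed at the correct vector: $|\langle\pi_\lambda(uk)v_\lambda,e_\lambda\rangle|\le\|v_\lambda\|$ gives $e^{\mathrm{Re}\,\lambda(\mathcal H(uk))}\le \|v_\lambda\|/|\langle v_\lambda,e_\lambda\rangle|$, and one checks the constant is $1$ by evaluating at $u=e$.
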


The above theory may seem a little too abstract, so we elucidate it by the example of 
$$\mathfrak{g}=\mathfrak{sl}_2(\mathbb{R})=\left\{\begin{pmatrix} a & b \\ c & -a \end{pmatrix}:\ a,b,c\in\mathbb{R}\right\},$$
equipped with the Cartan involution $\theta: X\to -X^T$. Then we have $G=\t{SL}_2(\mathbb{R})$ and $G_\C=\t{SL}_2(\mathbb{C})$.
We calculate
$$\mathfrak{k}=\mathfrak{so}_2(\mathbb{R})=\left\{\begin{pmatrix} 0 & a \\ -a & 0 \end{pmatrix}:\ a\in\mathbb{R}\right\},\ K=\t{SO}_2(\mathbb{R}),\ K_\C=\t{SO}_2(\mathbb{C}),$$
$$\mathfrak{u}=\mathfrak{su}_2=\left\{\begin{pmatrix} ia & b+ic \\ -b+ic & -ia  \end{pmatrix}:\ a,b,c\in\mathbb{R}\right\},\ U=\t{SU}_2,$$
$$\mathfrak{a}=\left\{\begin{pmatrix} a & 0 \\ 0 & -a \end{pmatrix}:\ a\in\mathbb{R}\right\},\ A=\left\{\begin{pmatrix} e^t & 0 \\ 0 & e^{-t} \end{pmatrix}:\ t\in\mathbb{R}\right\},\ A_\mathbb{C}=\left\{\begin{pmatrix} e^z & 0 \\ 0 & e^{-z} \end{pmatrix}:\ z\in\mathbb{C}\right\},$$
$$\mathfrak{n}=\left\{\begin{pmatrix} 0 & a \\ 0 & 0 \end{pmatrix}:\ a\in\mathbb{R}\right\},\ N=\left\{\begin{pmatrix} 0 & x \\ 0 & 0 \end{pmatrix}:\ x\in\mathbb{R}\right\},
\ N_\C=\left\{\begin{pmatrix} 0 & z \\ 0 & 0 \end{pmatrix}:\ z\in\mathbb{C}\right\},
$$
$$F=\left\{\begin{pmatrix} 1 & 0 \\ 0 & 1 \end{pmatrix},\ \begin{pmatrix} -1 & 0 \\ 0 & -1 \end{pmatrix}\right\},$$
$$\omega=\left\{\begin{pmatrix} a & b \\ c & d \end{pmatrix}\in \t{SL}_2(\mathbb{C}):\ a^2+c^2\neq 0\right\}.$$
The associated compact symmetric space $U/K=\t{SU}_2/\t{SO}_2(\mathbb{R})$ is simply the two sphere. 
For $g=\begin{pmatrix} a & b \\ c & d \end{pmatrix}\in \omega$, we have 
$$\mathcal{H}(g)=\log \sqrt{\begin{pmatrix} a^2+c^2 & 0 \\ 0 & (a^2+c^2)^{-1} \end{pmatrix}}.$$
Let $\alpha:\mathfrak{a}_\C\to \C$ be defined by 
$$\alpha\begin{pmatrix} a & 0 \\ 0 & -a \end{pmatrix}=2a, \ \forall a\in\C.$$
Then 
$$\Lambda^+=\{\lambda_n:=n\alpha:\ n\in\mathbb{Z}_{\geq 0}\},$$
so that 
$$e^{\lambda_n(\mathcal{H}(g))}=(a^2+c^2)^n.$$
A maximal torus of $U=\t{SU}_2$ is 
$$B=\left\{\begin{pmatrix} e^{i\theta} & 0 \\ 0 & e^{-i\theta} \end{pmatrix}:\ \theta\in \mathbb{R}\right\}.$$ 
Let $\varphi_n$ denote the spherical function associated to the highest weight $\lambda_n$ for $n\in\mathbb{Z}_{\geq 0}$. Since $\varphi_n$ is bi-invariant under $K$, to determine its values it suffices to restrict it on $B$. For $u=\begin{pmatrix} e^{i\theta} & 0 \\ 0 & e^{-i\theta} \end{pmatrix}\in B$ and $k=\begin{pmatrix} \cos t & \sin t \\ -\sin t & \cos t \end{pmatrix}\in K$, 
$$uk=\begin{pmatrix} e^{i\theta}\cos t & e^{i\theta}\sin t \\  -e^{-i\theta}\sin t  & e^{-i\theta}\cos t \end{pmatrix},$$ 
so that 
$$e^{\lambda_n(\mathcal{H}(uk))}=(e^{2i\theta}\cos^2t+e^{-2i\theta}\sin^2t)^n=(\cos 2\theta+i\sin 2\theta\cos 2t)^n.$$
Identifying $K\cong \mathbb{R}/2\pi \mathbb{Z}$, we have $dk=\frac{1}{2\pi }dt$. By Clerc's formula \eqref{Clerc's}, we have 
$$\varphi_n(\theta)=\frac{1}{2\pi}\int_0^{2\pi}(\cos 2\theta+i\sin 2\theta\cos 2t)^n\ dt.$$
Thus we have retrieved Laplace's formula for the Legendre polynomials.

Using Clerc's formula, we can rewrite sums of the form $\sum_{\lambda\in\Lambda^+}f_\lambda\varphi_\lambda(g)$ into
\begin{align*}
\sum_{\lambda\in\Lambda^+}f_\lambda\varphi_\lambda(g)=\int_{K_g}\left(\sum_{\lambda\in\Lambda^+}f_\lambda e^{\lambda(\mathcal{H}(gk))}\right)\ dk.
\end{align*}
Thus to derive pointwise estimates of the above sum on the left side, it suffices to get estimates on the inner sum on the right side of the above equation. We will follow this method to derive estimates for the kernel function associated to the Schr\"odinger propagator $e^{it\Delta}$, in order to prove Theorem \ref{Main}.

\subsection{Fourier series and Fourier support of products}\label{support}
In this subsection, we review some basics about Fourier series on a compact symmetric space. The analogue of Fourier series on tori for an arbitrary compact manifold may be the spectral decomposition of the Laplace-Beltrami operator. Though it is very useful for a generic manifold, it does not by itself capture the symmetries of manifold, for example those behind the high rank of a symmetric space. For a compact symmetric space $U/K$, a finer analogue of Fourier series may be the spectral decomposition of the full ring $D(U/K)$ of $U$-invariant differential operators as follows. For continuous functions $f$ and $g$ on $U$, let $f*g$ denote their convolution with respect to a normalized Haar measure on $U$. If $f,g$ are both right-invariant under $K$, then so is $f*g$. 
For $\lambda\in\Lambda^+$, let $d_\lambda$ denote the dimension of the irreducible spherical representation $\pi_\lambda$ of $U$. For $f\in L^2(U/K)$, 
define the spectral projectors
$$P_\lambda f=d_\lambda f*\varphi_\lambda.$$
Then we have the Fourier series 
$$f=\sum_{\lambda\in\Lambda^+} P_\lambda f.$$
This is also the spectral decomposition of $D(U/K)$, where $P_\lambda f$ is a joint eigenfunction of $D(U/K)$ of spectral parameter $\lambda$, and is also equal to a matrix coefficient of the form 
$\langle \pi_\lambda v_\lambda,e_\lambda\rangle$ for some $v_\lambda\in V_\lambda$. 
By Schur's orthogonality relations, for $f_\lambda, f_\mu$ joint eigenfunctions of distinct spectral parameter $\lambda,\mu$ respectively, we have 
$$\langle f_\lambda, f_\mu \rangle_{L^2(U/K)}=0.$$
In particular, we have Parseval's identity
$$\langle f, g\rangle_{L^2(U/K)}=\sum_{\lambda\in\Lambda^+}\langle P_\lambda f, P_\lambda g\rangle_{L^2(U/K)}.$$
Now suppose $\kappa$ is a $K$-invariant function on $U/K$. Then $\kappa$ has a Fourier transform $\hat{\kappa}$ defined as follows 
\begin{align}\label{FT}
\kappa=\sum_{\lambda\in \Lambda^+}\hat{\kappa}(\lambda)d_\lambda\varphi_\lambda.
\end{align}
Then we have 
$$\|f*\kappa\|_{L^2(U/K)}\leq\sup_{\lambda}|\hat{\kappa}(\lambda)|\cdot \|f\|_{L^2(U/K)}.$$


On a torus, the Fourier support of the product of two functions is the sum of the Fourier support of each; we also have the following analogous fact for symmetric spaces of compact type. We say $f\in L^2(U/K)$ is Fourier supported on a subset $S$ of $\Lambda^+$, if $P_\lambda f$ vanishes for $\lambda\notin S$. Let $|\cdot|$ denote the norm on $\mathfrak{a}^*$ induced from the Killing form. Let $w_1,\ldots,w_r\in\Lambda^+$ be the fundamental weights, and let 
$$\rho_0=\sum_{i=1}^r w_i.$$


\begin{lem}[Fourier support of products]\label{Fouriersupport}
For $f,g\in L^2(U/K)$ and $\lambda,\mu\in \Lambda^+$, we have that $P_\lambda f \cdot P_\mu g$ is Fourier supported on 
$$\{\nu\in\Lambda^+:\ \nu=\lambda+\xi,\ \xi\in\Lambda, \ |\xi+\rho_0|\leq |\mu+\rho_0|\}.$$
\end{lem}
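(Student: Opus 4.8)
The plan is to reduce the statement to the classical fact about decomposition of tensor products of irreducible representations. Recall that $P_\lambda f$ is, up to scalar, a matrix coefficient of $\pi_\lambda$ of the form $\langle \pi_\lambda(\cdot) v, e_\lambda\rangle$, and similarly $P_\mu g = \langle \pi_\mu(\cdot) w, e_\mu\rangle$ for suitable $v \in V_\lambda$, $w \in V_\mu$. Hence the product $P_\lambda f \cdot P_\mu g$ is a matrix coefficient of the tensor product representation $\pi_\lambda \otimes \pi_\mu$ of $U$, namely $\langle (\pi_\lambda\otimes\pi_\mu)(\cdot)(v\otimes w), e_\lambda\otimes e_\mu\rangle$. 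Decomposing $\pi_\lambda\otimes\pi_\mu = \bigoplus_\nu m_\nu \pi_\nu$ into irreducibles, the function $P_\lambda f\cdot P_\mu g$ is a sum of matrix coefficients of those $\pi_\nu$ that appear; moreover, since $e_\lambda$ and $e_\mu$ are $K$-fixed, so is $e_\lambda\otimes e_\mu$, and projecting it onto each $V_\nu$-isotypic piece produces a $K$-fixed vector there. Therefore only the \emph{spherical} constituents $\pi_\nu$ (those with $\nu\in\Lambda^+$, equivalently with a nonzero $K$-fixed vector) can contribute, and $P_\nu(P_\lambda f\cdot P_\mu g)$ is a matrix coefficient of such a $\pi_\nu$. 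This shows $P_\lambda f\cdot P_\mu g$ is Fourier supported on the set of $\nu\in\Lambda^+$ occurring in $\pi_\lambda\otimes\pi_\mu$.

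Next I would identify which $\nu$ can occur. By the standard highest-weight analysis of $\pi_\lambda\otimes\pi_\mu$ (the weights of the tensor product are sums of a weight of $\pi_\lambda$ and a weight of $\pi_\mu$), every highest weight $\nu$ appearing satisfies $\nu = \lambda + \xi$ where $\xi$ is a weight of $\pi_\mu$; in particular $\xi \in \Lambda$ and $\xi$ lies in the convex hull of the Weyl group orbit $W\mu$. The remaining point is the size estimate $|\xi + \rho_0| \leq |\mu + \rho_0|$. This follows from the fact that, in the convex hull of $W\mu$ shifted by $\rho_0$, the function $x\mapsto |x+\rho_0|$ is maximized at the vertices, i.e.\ at the $W$-translates of $\mu$, all of which have norm $|w\mu + \rho_0| \leq |\mu+\rho_0|$ (equality for $w=\mathrm{id}$, since $\mu$ and $\rho_0$ are both dominant so $(\mu,\rho_0)\geq (w\mu,\rho_0)$). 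Since $|x+\rho_0|^2$ is a convex function of $x$ and the weights $\xi$ of $\pi_\mu$ lie in $\mathrm{conv}(W\mu)$, we get $|\xi+\rho_0| \leq \max_{w\in W}|w\mu+\rho_0| = |\mu+\rho_0|$, as desired.

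To handle the general compact symmetric space $M = \T^{r_0}\times U/K$, I would note the Fourier support of a product factors through the torus and the $U/K$ parts separately: on the torus the support of a product is the sumset of supports, and the bound $|\xi+\rho_0|\le|\mu+\rho_0|$ with $\rho_0 = 0$ on the torus part is just $|\xi|\le|\mu|$, which is the elementary statement for tori (weights of the product of exponentials at frequencies $\le \mu$ are the sums, bounded by triangle inequality — actually here one uses that each factor is a single frequency). One assembles the two by taking $\rho_0$ on $M$ to be $(0,\rho_0)$ and using that $|\cdot|$ on $\mathfrak a^*$ respects the product decomposition.

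The step I expect to be the main obstacle is making the tensor-product/matrix-coefficient bookkeeping precise: one must check carefully that $P_\lambda f$ really is of the stated matrix-coefficient form with the \emph{same} $K$-fixed vector $e_\lambda$ on the right, that multiplying two such matrix coefficients gives a matrix coefficient of $\pi_\lambda\otimes\pi_\mu$ with right-vector $e_\lambda\otimes e_\mu$, and that the projection $P_\nu$ applied to this product isolates exactly the $\pi_\nu$-isotypic part — so that the constraint "$\pi_\nu$ is spherical and occurs in $\pi_\lambda\otimes\pi_\mu$" is both necessary and sufficient for $P_\nu(P_\lambda f\cdot P_\mu g)$ to be possibly nonzero. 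The convexity estimate $|\xi+\rho_0|\le|\mu+\rho_0|$ is routine once the representation-theoretic reduction is in place.
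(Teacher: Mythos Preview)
Your proposal is correct and follows essentially the same route as the paper: both arguments observe that $P_\lambda f\cdot P_\mu g$ is a matrix coefficient of $\pi_\lambda\otimes\pi_\mu$, invoke the tensor-product decomposition to see that any constituent $\pi_\nu$ has $\nu=\lambda+\xi$ with $\xi$ a weight of $\pi_\mu$, and then bound $|\xi+\rho_0|$. The paper cites Steinberg's formula (Humphreys, Exercise~24.12) for the decomposition and Humphreys' Lemma~13.4C for the inequality $|\xi+\rho_0|\le|\mu+\rho_0|$; your convexity argument (weights of $\pi_\mu$ lie in $\mathrm{conv}(W\mu)$, the convex function $|x+\rho_0|^2$ is maximized at a vertex $w\mu$, and $(w\mu,\rho_0)\le(\mu,\rho_0)$ for dominant $\mu,\rho_0$) is precisely a self-contained proof of that lemma, so the two arguments coincide. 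Your extra care about why only spherical $\pi_\nu$ contribute, and the torus extension, are not in the paper's proof but are straightforward and correct additions.
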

\begin{proof}
$P_\lambda f\cdot P_\mu g$ appears as a matrix coefficient of the tensor product representation $\pi_\lambda\otimes\pi_\mu$. 
By Steinberg's formula 
and its consequences \cite[Exercise 12 of Chapter VI Section 24]{Hum72}, we have 
$$\pi_\lambda\otimes\pi_\mu=\bigoplus_{\nu\in\Lambda^+, \ \nu=\lambda+\xi, \ \xi\in\Pi(\mu)}m_\nu\pi_\nu,$$
where $\Pi(\mu)\subset \Lambda$ is the set of weights associated to $\pi_\mu$, and $m_\nu$ is the multiplicity of $\pi_\nu$ in $\pi_\lambda\otimes\pi_\mu$ which is allowed to be zero. This implies 
$$P_\lambda f\cdot P_\mu g=\sum_{\nu\in\Lambda^+, \ \nu=\lambda+\xi, \ \xi\in\Pi(\mu)}f_\nu,$$
where $f_\nu$ is a joint eigenfunction of spectral parameter $\nu$. By Lemma C of Chapter III Section 13.4 of \cite{Hum72}, we have $|\xi+\rho_0|\leq |\mu+\rho_0|$ for all $\xi\in\Pi(\mu)$, and this finishes the proof. 
\end{proof}

More generally, we write out the Fourier series on a compact symmetric space $M$ that has a toric component. Suppose $M=\T^{r_0}\times M'$ where $\T^{r_0}$ is a rational torus of rank $r_0$ and $M'=U/K$ is a symmetric space of compact type. 
For $f\in L^2(\T^{r_0}\times M')$, we first write the Fourier series with respect to the variable $x\in \T^{r_0}$
$$f(x,y)=\sum_{\xi\in\Z^{r_0}}e^{i(\xi,x)}\hat{f}(\xi,y),$$
and then write the Fourier series with respect to the variable $y\in M'$ described as above 
\begin{align}\label{Fourierseries}
f(x,y)=\sum_{\xi\in\Z^{r_0},\lambda\in\Lambda^+}e^{i(\xi,x)}P_{\lambda}\hat{f}(\xi,y),
\end{align}
which is the Fourier series of $f$ on $M$. For a $K$-invariant function $\kappa$ on $M$, it has a Fourier transform $\hat{\kappa}$ defined as follows 
\begin{align}\label{Fouriertransform}
\kappa=\sum_{\xi\in\Z^{r_0},\ \lambda\in \Lambda^+}\hat{\kappa}(\xi,\lambda)e^{i(\xi,x)}d_\lambda\varphi_\lambda.
\end{align}
Then we have 
$$\|f*\kappa\|_{L^2(M)}\leq\sup_{\xi,\lambda}|\hat{\kappa}(\xi,\lambda)|\cdot \|f\|_{L^2(M)}.$$

\subsection{Littlewood-Paley projections} \label{reduction}
Now we introduce Littlewood-Paley projections. Let 
$$\rho=\frac{1}{2}\sum_{\alpha\in\Sigma^+}\dim(\mathfrak{g}_\alpha) \alpha,$$
and let $\Delta$ denote the Laplace-Beltrami operator. Then for any joint eigenfunction $f_\lambda$ of spectral parameter $\lambda$, we have  
$$\Delta f_\lambda=(-|\lambda+\rho|^2+|\rho|^2)f_\lambda.$$
Let 
$$|\lambda|_\rho=\sqrt{|\lambda+\rho|^2-|\rho|^2}, \ \forall\lambda\in\Lambda^+.$$
Let $N=2^m$, $m=0,1,\ldots$. For $f\in L^2(U/K)$, define the sharp Littlewood-Paley projections 
$$P_N f=\sum_{N\leq |\lambda|_\rho<2N} P_\lambda f.$$ 
We may also define the mollified version as follows. Pick $\phi_0\in C_c^\infty(\mathbb{R})$ and $\phi\in C_c^\infty(\mathbb{R}\setminus\{0\})$ to satisfy $\phi_0(y)+\sum_{m=0}^\infty \phi(2^{-m}y)=1$ for all $y\in \mathbb{R}$, then we define 
$$\tilde{P}_N f=\sum_{\lambda\in\Lambda^+} \phi\left(|\lambda|_\rho/N\right)P_\lambda f.$$ 
More generally, for $M=\T^{r_0}\times M'$, using \eqref{Fourierseries}, we have 
$$\Delta f=\sum_{\xi\in\Z^{r_0},\ \lambda\in\Lambda^+}
(-|\xi|^2-|\lambda|_\rho^2)e^{i(\xi,x)}P_\lambda\hat{f}(\xi,y).$$
Then define 
$$P_Nf=\sum_{N\leq \sqrt{|\xi|^2+|\lambda|_\rho^2}<2N}e^{i(\xi,x)}P_\lambda\hat{f}(\xi,y),$$
$$\tilde{P}_Nf=\sum_{\xi\in\Z^{r_0},\ \lambda\in\Lambda^+}\phi\left( \sqrt{|\xi|^2+|\lambda|_\rho^2}\mathlarger{\mathlarger{/}}N\right)e^{i(\xi,x)}P_\lambda\hat{f}(\xi,y).$$



\section{Linear Strichartz estimates}\label{linearStri}
\subsection{The Schr\"odinger kernel}
By Littlewood-Paley theory on compact manifolds \cite[Corollary 2.3]{BGT04}, Strichartz estimates \eqref{Stri} may be reduced to 
\begin{align}\label{StriMollified}
\|\tilde{P}_Ne^{it\Delta}f\|_{L^p(I\times M)}\lesssim N^{\frac{d}{2}-\frac{d+2}{p}}\|f\|_{L^2(M)}
\end{align}
or equivalently 
\begin{align*}
\|P_Ne^{it\Delta}f\|_{L^p(I\times M)}\lesssim N^{\frac{d}{2}-\frac{d+2}{p}}\|f\|_{L^2(M)}.
\end{align*}
We will use the mollified version for the proof of Theorem \ref{Main}. We now assume that $M=U/K$ is a symmetric space of compact type, and the cases when $M$ has a toric component can be established by a straightforward adaptation. 
Let $f\in L^2(U/K)$. Expressing the Schr\"odinger propagator as a convolution
\begin{align*}
\tilde{P}_Ne^{it\Delta}f=f*\mathscr{K}_N(t,\cdot),
\end{align*} 
the kernel function $\mathscr{K}_N(t,\cdot)$ reads 
\begin{align}\label{mollifiedkernel}
\mathscr{K}_N(t,x)=\sum_{\lambda\in\Lambda^+}\phi(|\lambda|_\rho/N) e^{-it|\lambda|^2_\rho}d_\lambda\varphi_\lambda. 
\end{align}
We need the following well-known fact in the theory of root systems. 
\begin{lem}[Rationality of the Killing form] \label{rationality}
$(\lambda,\mu)\in \mathbb{Q}$ whenever $\lambda$ and $\mu$ lie in the weight lattice or the root lattice. 
\end{lem}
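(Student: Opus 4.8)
The plan is to reduce everything to a computation in the root lattice, where rationality of the Killing form is an immediate consequence of the Cartan integers being integers. First I would recall that for a semisimple Lie algebra with Cartan subalgebra, the Killing form restricted to the real span of the roots is positive definite, and that the coroots $\alpha^\vee = 2\alpha/(\alpha,\alpha)$ satisfy the integrality condition $\langle \beta, \alpha^\vee\rangle = 2(\beta,\alpha)/(\alpha,\alpha) \in \Z$ for all roots $\alpha,\beta$; these are the Cartan integers, and their integrality is part of the axioms of a (crystallographic) root system. In our setting $\Sigma$ is the restricted root system, the roots $\alpha$ span $\mathfrak{a}^*$, and the relevant inner product is the one induced by the Killing form.

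The key step is then: it suffices to show $(\alpha,\beta) \in \Q$ for all $\alpha,\beta\in\Sigma$, since the root lattice is the $\Z$-span of the roots and the weight lattice $\Lambda$ is contained in the $\Q$-span of the roots (indeed $\Lambda$ is generated over $\Z$ by the fundamental weights $w_1,\dots,w_r$, each of which is a $\Q$-linear combination of simple roots, with coefficients given by the inverse Cartan matrix). Once $(\alpha,\beta)\in\Q$ for roots, bilinearity propagates rationality to all $\Q$-linear combinations, hence to all pairs of elements of $\Lambda$ (and a fortiori of the root lattice). So the whole lemma collapses to the single assertion $(\alpha,\beta)\in\Q$ for $\alpha,\beta\in\Sigma$.

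To prove that assertion, I would pick a simple system $\alpha_1,\dots,\alpha_r$ and argue as follows. The Cartan integers $c_{ij} = 2(\alpha_i,\alpha_j)/(\alpha_j,\alpha_j)$ are integers. Writing $d_i = (\alpha_i,\alpha_i)$, we get $(\alpha_i,\alpha_j) = \tfrac12 c_{ij} d_j$, and the symmetry $(\alpha_i,\alpha_j)=(\alpha_j,\alpha_i)$ forces $c_{ij}d_j = c_{ji}d_i$. This system of relations, together with the connectedness of the Dynkin diagram on each irreducible factor, determines all the $d_i$ up to a single overall positive scalar per irreducible component — concretely, fixing $d_1$ for a component, every other $d_i$ in that component is a rational multiple of $d_1$ (the ratio is $1$, $2$, or $3$ depending on root lengths). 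Thus $(\alpha_i,\alpha_j)/d_1 \in \Q$ for all $i,j$ in the component. It remains to see that $d_1$ itself — the squared length of a fixed root in the Killing form — is rational. This follows because the Killing form is given by $(\alpha_1,\alpha_1) = \sum_{\gamma\in\Sigma} \langle \alpha_1,\gamma^\vee\rangle\langle\gamma,\alpha_1^\vee\rangle \cdot \tfrac{(\alpha_1,\alpha_1)^2}{4}$-type identities; more cleanly, the normalization constant relating the Killing form to the "standard" rational-valued form on the root lattice is the Coxeter-number-type quantity $2h^\vee$ (dual Coxeter number), which is an integer, so $(\alpha_1,\alpha_1)$ is a rational (indeed, on the root-lattice scale, a reciprocal-integer) multiple of the standard normalization in which roots have squared length $2$. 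Hence all $(\alpha_i,\alpha_j)\in\Q$.

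The main obstacle is purely bookkeeping: making precise the claim that the Killing form differs from the standard $\Z$-valued Cartan-matrix normalization only by a rational (in fact reciprocal-integer) scalar on each irreducible factor. I expect to handle this by invoking the standard identity expressing the Killing form on $\mathfrak{a}^*$ as $(\lambda,\mu) = \sum_{\alpha\in\Sigma}\dim(\mathfrak{g}_\alpha)\,\alpha^\vee(\lambda)\alpha^\vee(\mu)$-style trace formulas, or simply by citing the classification-free fact that for any root system the Gram matrix of a simple system in any Weyl-invariant inner product is a positive rational multiple (per component) of the symmetrized Cartan matrix, combined with the dual-Coxeter-number computation of that multiple for the Killing form. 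Since the statement is classical, in the write-up I would keep this last paragraph short and refer to the standard references on semisimple Lie algebras.
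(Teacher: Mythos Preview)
The paper does not actually prove this lemma: it introduces it as ``the following well-known fact in the theory of root systems'' and moves on immediately to its application. So there is no proof in the paper to compare against; your proposal supplies one where the paper simply cites folklore.

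Your argument is correct and standard. The reduction to roots via the inverse Cartan matrix is fine (and works for the possibly non-reduced restricted root system $\Sigma$ as well, since the Cartan integers $2(\beta,\alpha)/(\alpha,\alpha)$ are still integers there). For the final step --- rationality of the squared root lengths $(\alpha,\alpha)$ in the Killing normalization --- your write-up is a bit hand-wavy; the cleanest way to nail it down in this restricted-root setting is to use the trace formula
\[
B(H,H')=\sum_{\gamma\in\Sigma}\dim(\mathfrak{g}_\gamma)\,\gamma(H)\gamma(H'), \qquad H,H'\in\mathfrak{a},
\]
which gives
\[
(\alpha,\alpha)=B(H_\alpha,H_\alpha)=\sum_{\gamma\in\Sigma}\dim(\mathfrak{g}_\gamma)\,(\gamma,\alpha)^2
=(\alpha,\alpha)^2\sum_{\gamma\in\Sigma}\dim(\mathfrak{g}_\gamma)\left(\tfrac{(\gamma,\alpha)}{(\alpha,\alpha)}\right)^2,
\]
and since each $\tfrac{2(\gamma,\alpha)}{(\alpha,\alpha)}\in\Z$, the sum on the right is rational, forcing $(\alpha,\alpha)\in\Q$. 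This is exactly the ``trace-formula'' route you gesture at, and it avoids any appeal to classification or dual Coxeter numbers. With that tightening, your proof is complete; since the paper offers none, yours is strictly more detailed.
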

By the above lemma, 
there exists $\mathcal{T}\in 2\pi\Z$, such that 
\begin{align}\label{D}
|\lambda|_\rho^2\in \frac{2\pi}{\mathcal{T}}\mathbb{Z},\ \forall\mu\in\Lambda^+,
\end{align} 
thus both $\mathscr{K}$ and $\mathscr{K}_N$ are actually periodic in $t$ with the period $\mathcal{T}$. Thus we may think of the variable $t$ as lying on the torus
$\mathbb{T}=\R/\mathcal{T}\Z$. This key observation allows us to replace the time interval $I$ by $\mathbb{T}$, and to use the $TT^*$ argument to transform \eqref{StriMollified} into 
\begin{align}\label{spacetimeestimate}
\|\mathscr{K}_N * F\|_{L^p(\mathbb{T}\times M)}\lesssim N^{d-\frac{2(d+2)}{p}}\|F\|_{L^{p'}(\mathbb{T}\times M)}. 
\end{align}
Here $1/p+1/p'=1$ and $*$ stands for the convolution on the product symmetric space $\mathbb{T}\times M$. The above estimate can be thought of as a (discrete) Fourier restriction estimate for the parabola 
$$\left\{(n,\lambda)\in \frac{2\pi\mathbb{Z}}{\mathcal{T}}\times \Lambda^+:\ n=|\lambda|_\rho^2\right\}$$ 
on such product spaces. Following the framework of Bourgain \cite{Bou93}, we now wish to derive pointwise estimate of the Schr\"odinger kernel $\mathscr{K}_N$. 
Using Clerc's formula \eqref{Clerc's}, we rewrite the Schr\"odinger kernel 
\begin{align}\label{Clerc'sapp}
\mathscr{K}_N(t,uK)=\int_{K_u}\kappa_{N}(t,u,k)\ dk,
\end{align}
where
\begin{align}\label{defnofkappaN}
\kappa_{N}(t,u,k)=\sum_{\lambda\in\Lambda^+}\phi\left(|\lambda|_\rho/N\right)e^{-it(|\mu+\rho|^2-|\rho|^2)+\mu(\mathcal{H}(uk))}d_\mu. 
\end{align}
Then
\begin{align}\label{Kk}
|\mathscr{K}_N(t,uK)|\leq\sup_{k\in K}|\kappa_{N}(t,u,k)|.
\end{align}
Note that $\kappa_N(t,u,k)$ is in the form of a Weyl type exponential sum, so we wish to apply the classical technique of Weyl differencing to bound it on major arcs of the time variable as follows. 

\begin{prop}\label{dispersive}
Let $\|\cdot\|$ stand for the distance from the nearest integer. Define the major arcs
\begin{align*}
\mathcal{M}_{a,q}=\left\{s\in\mathbb{R}/\Z:\ \left\|s-\frac{a}{q}\right\|<\frac{1}{qN}\right\}
\end{align*}
where
\begin{align*}
a\in\mathbb{Z}_{\geq 0},\ q\in\mathbb{Z}_{\geq 1},\ a<q,\ (a,q)=1,\ q<N. 
\end{align*}
Then 
\begin{align}\label{kappaN}
|\kappa_N(t,u,k)|\lesssim\frac{N^{d}}{\left(\sqrt{q}\left(1+N\left\|\frac{t}{\mathcal{T}}-\frac{a}{q}\right\|^{1/2}\right)\right)^r}
\end{align}
for $\frac{t}{\mathcal{T}}\in\mathcal{M}_{a,q}$, uniformly for $u\in U$ and $k\in K$. 
\end{prop}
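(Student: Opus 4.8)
The plan is to recognize $\kappa_N(t,u,k)$ as a quadratic Weyl exponential sum over the rank-$r$ weight lattice carrying a uniformly controlled, slowly varying amplitude, and then to run on it the classical circle‑method (Weyl differencing) estimate for such sums on major arcs, exactly as for the parabola on $\T^d$ in \cite{Bou93}.

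\emph{Step 1: reduction to a lattice exponential sum.} Fix the fundamental weights $w_1,\dots,w_r$ and write $\lambda=\sum_{i=1}^r n_iw_i$ with $n=(n_1,\dots,n_r)\in\Z_{\ge0}^r$; since the $w_i$ lie in the dominant cone, $|\lambda|\asymp|\lambda|_\rho\asymp|n|$ for $|n|$ large, so $\phi(|\lambda|_\rho/N)$ confines the sum to $|n|\asymp N$. Using the rationality of the Killing form (Lemma~\ref{rationality}) I would enlarge $\mathcal T\in2\pi\Z$ so that $Q(n):=\tfrac{\mathcal T}{2\pi}|\lambda|_\rho^2$ is an \emph{integral} quadratic polynomial in $n$, and put $s=t/\mathcal T\in\R/\Z$, so that $e^{-it|\lambda|_\rho^2}=e^{-2\pi isQ(n)}$. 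Writing $w_i(\mathcal H(uk))\in\C$ and invoking Clerc's phase bound \eqref{phase} — which forces $\t{Re}\,w_i(\mathcal H(uk))\le0$, because $\Lambda^+$ is generated by the $w_i$ over $\Z_{\ge0}$ — I would set $w_i(\mathcal H(uk))=-b_i+i\theta_i$ with $b_i=b_i(u,k)\ge0$, so that $e^{\lambda(\mathcal H(uk))}=e^{-\langle b,n\rangle}e^{i\langle\theta,n\rangle}$. This recasts the sum as
\[
\kappa_N(t,u,k)=\sum_{n\in\Z_{\ge0}^r}A_n\,e^{2\pi i(-sQ(n)+\langle\theta,n\rangle/2\pi)},\qquad A_n:=\phi(|\lambda|_\rho/N)\,d_\lambda\,e^{-\langle b,n\rangle}.
\]
By the Weyl dimension formula $d_\lambda$ is a polynomial in $\lambda$ of degree $d-r$, so $|A_n|\lesssim N^{d-r}$ on $|n|\asymp N$, and $A_n$ is bounded by $N^{d-r}$ with comparable total variation along each coordinate direction — all uniformly in $u\in U$ and $k\in K$.

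\emph{Step 2: Weyl differencing on a major arc, and conclusion.} For $t/\mathcal T\in\mathcal M_{a,q}$ write $s=a/q+\beta$ with $|\beta|=\|s-a/q\|<1/(qN)$, $q<N$, and split $n=qm+\sigma$ with $\sigma$ ranging over $(\Z/q)^r$ and $m\in\Z^r$. Since $Q$ has integral coefficients, $e^{-2\pi i(a/q)Q(qm+\sigma)}$ depends only on $\sigma\bmod q$, so
\[
\kappa_N=\sum_{\sigma\bmod q}e^{-2\pi i\tfrac{a}{q}Q(\sigma)}\sum_{m\in\Z^r}A_{qm+\sigma}\,e^{2\pi i(-\beta Q(qm+\sigma)+\langle\theta,qm+\sigma\rangle/2\pi)}.
\]
The inner sum over $m$ is a smooth sum of $\asymp(N/q)^r$ terms whose quadratic phase has Hessian of size $\asymp|\beta|q^2$ and oscillates by $\lesssim|\beta|N^2<N/q$ over the range of summation; by Poisson summation (the Gaussian‑integral comparison) it is $\lesssim N^{d-r}(N/q)^r(1+N|\beta|^{1/2})^{-r}$, with the frequencies $h\neq0$ negligible since $q<N$, and the leading frequency's dependence on $\sigma$ being a pure unimodular phase — so the $\sigma$-sum collapses to an inhomogeneous Gauss sum $\sum_{\sigma\bmod q}e^{2\pi i(-aQ(\sigma)+h\cdot\sigma)/q}$, bounded by $\lesssim q^{r/2}$ for $(a,q)=1$ since $Q$ is a fixed nondegenerate rational form. (Equivalently one differences $r$ times and estimates the resulting linear exponential sums and Gauss sums directly, dispatching the amplitude by summation by parts with the bounds from Step~1.) Multiplying the pieces,
\[
|\kappa_N(t,u,k)|\lesssim q^{r/2}\cdot q^{-r}\cdot N^{d-r}\,\frac{(N/q)^r}{(1+N|\beta|^{1/2})^r}=\frac{N^{d}}{\bigl(\sqrt q\,(1+N|\beta|^{1/2})\bigr)^{r}},
\]
which, as $|\beta|=\|t/\mathcal T-a/q\|$, is \eqref{kappaN}, uniformly in $u$ and $k$.

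\emph{Main obstacle.} The genuinely non‑classical point is the uniformity in Step~1: one must control the amplitude $A_n$ — above all the spherical‑function factor $e^{\lambda(\mathcal H(uk))}$ — uniformly over all $u\in U$ and $k\in K_u$, even though $\mathcal H(uk)$ is unbounded as $k$ approaches the measure‑zero set $K\setminus K_u$. Clerc's inequality \eqref{phase} is exactly what rescues this, forcing $e^{\lambda(\mathcal H(uk))}$ to be a bounded, monotone amplitude times a pure linear phase. A secondary technical nuisance is that the summation runs over the dominant cone $\Z_{\ge0}^r$ rather than a full lattice, and that $Q$ is a general rational rather than diagonal quadratic form; both are handled by the standard summation‑by‑parts and rational Gauss‑sum estimates, as in \cite{Bou93}.
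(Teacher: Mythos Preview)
Your plan is the right one, but Step~2 has a genuine gap in the uniformity over $(u,k)$. The Poisson/Gaussian-integral comparison you invoke requires the amplitude $A_{qm+\sigma}$ to be smooth in $m$ at scale $N/q$; the factors $\phi$ and $d_\lambda$ are, but $e^{-\langle b,n\rangle}$ is not: $b_j=-\mathrm{Re}\,w_j(\mathcal H(uk))$ is \emph{unbounded} as $k\to K\setminus K_u$, so $|D_jA_n|\asymp(1-e^{-b_j})|A_n|$ can be comparable to $|A_n|$ rather than $N^{-1}|A_n|$. Monotonicity only buys you one-variable Abel summation, which reduces matters to partial sums over sharp boxes --- and those sharp cutoffs again wreck the Poisson step. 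The paper's remedy is to Weyl-difference \emph{before} doing anything else: writing $|\kappa_N|^2$ and parametrizing by the sum $\lambda=\lambda_1+\lambda_2$ and one summand $\mu=\lambda_1$ makes the damping $e^{(\lambda_1+\lambda_2)(H_0)}=e^{\lambda(H_0)}$ depend only on the \emph{outer} variable, where Clerc's inequality bounds it by $1$ and it drops out. The inner sum over $\mu$ then carries the purely polynomial amplitude $\phi(|\mu|_\rho/N)\phi(|\lambda-\mu|_\rho/N)d_\mu d_{\lambda-\mu}$ and a \emph{linear} phase $e^{2i[\mu(H)-t(\mu,\lambda+2\rho)]}$, to which iterated summation by parts applies with honest $N^{-1}$ gains per difference.

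Second, the restriction to the cone $\Z_{\ge0}^r$ is not a secondary nuisance dispatched by ``standard summation by parts''. After the squaring, the inner $\mu$-sum runs over the box $\Lambda^+\cap(\lambda-\Lambda^+)$, and summing by parts in each coordinate throws off boundary terms at $n_j=0$ and $n_j=n_j^\lambda+1$. If one only uses that $d_\mu$ has degree $d-r$, those boundary terms are a full factor of $N$ worse than the interior term, and the final estimate acquires an $N^\varepsilon$ loss --- the paper says so explicitly in the Remark following the proof. The missing input is structural (Corollary~\ref{dmu}): the Weyl dimension formula forces $d_\mu$ to contain a linear factor $a_jn_j+b_j$ for \emph{every} coordinate $j$, so on each face $n_j\in\{0,n_j^\lambda+1\}$ the amplitude is already $O(N^{-1})$ smaller, exactly compensating the missing difference in that direction (Lemma~\ref{difference}(ii)). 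Your sketch never invokes this, and without it the sharp exponent in \eqref{kappaN} is out of reach.
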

In particular, by \eqref{Kk}, the above proposition implies  
\begin{align}\label{KNtH}
|\mathscr{K}_N(t,uK)|\lesssim\frac{N^{d}}{\left(\sqrt{q}\left(1+N\left\|\frac{t}{\mathcal{T}}-\frac{a}{q}\right\|^{1/2}\right)\right)^r}
\end{align}
for $\frac{t}{\mathcal{T}}\in\mathcal{M}_{a,q}$, uniformly for $uK\in U/K$.

\subsection{Proof of Proposition \ref{dispersive}}\label{kernelbound}
First, we record some technical facts concerning $d_\lambda$ ($\lambda\in\Lambda^+$), which are crucial in avoiding any scaling loss in the following proof of \eqref{kappaN}. Let $\Phi\subset \mathfrak{h}_\C^*$ denote the root system associated to $(\mathfrak{g}_\C,\mathfrak{h}_\C)$  and let $\Phi^+\subset\Phi$ be the set of positive roots with respect to an ordering compatible with that on $\mathfrak{a}^*$ (this ordering determines the positive roots in the restricted root system $\Sigma$). As explained in the previous section, the highest weight $\lambda\in\Lambda^+$ is originally an element in $i\mathfrak{h}^*$, and we may also extend it linearly as an element in the complex dual $\mathfrak{h}_\C^*$ of $\mathfrak{h}_\C$, and we still denote this element as $\lambda$. 

\begin{lem}
\begin{align*}
d_\lambda=\frac{\prod_{\alpha\in\Phi^+, \alpha|_\mathfrak{a}\neq 0}(\lambda+\rho', \alpha)}{\prod_{\alpha\in\Phi^+, \alpha|_\mathfrak{a}\neq 0}(\rho', \alpha)}, \ \text{ where }\rho'=\frac{1}{2}\sum_{\alpha\in\Phi^+}\alpha.
\end{align*}
\end{lem}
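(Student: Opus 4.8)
The plan is to deduce this from the classical Weyl dimension formula
$$d_\lambda=\prod_{\alpha\in\Phi^+}\frac{(\lambda+\rho',\alpha)}{(\rho',\alpha)},$$
and to show that every factor indexed by a root $\alpha\in\Phi^+$ with $\alpha|_{\mathfrak a}=0$ is trivial, so that only the factors with $\alpha|_{\mathfrak a}\neq 0$ survive. Since $(\rho',\alpha)>0$ for every $\alpha\in\Phi^+$ (recall $\rho'$ is strictly dominant), it suffices to prove that $(\lambda,\alpha)=0$ whenever $\alpha\in\Phi^+$ vanishes on $\mathfrak a$; then $(\lambda+\rho',\alpha)=(\rho',\alpha)$ and the corresponding ratio equals $1$.

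First I would recall the relevant structure. The centralizer of $\mathfrak a_\C$ in $\mathfrak g_\C$ equals $\mathfrak m_\C\oplus\mathfrak a_\C$, which contains the Cartan subalgebra $\mathfrak h_\C=\mathfrak d_\C\oplus(i\mathfrak a)_\C$; the root subspaces $\mathfrak g_\alpha$ it contains are precisely those with $\alpha|_{\mathfrak a}=0$, so these $\alpha$ are exactly the roots of the pair $(\mathfrak m_\C,\mathfrak d_\C)$. Under the identification of $\mathfrak h_\C^*$ with $\mathfrak h_\C$ via the Killing form, such an $\alpha$ corresponds to a vector $t_\alpha$ lying in $\mathfrak d_\C$. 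On the other hand, $\lambda$ is the highest weight of a spherical representation, hence vanishes on $\mathfrak d$ as recalled in Section \ref{spherical}; equivalently its Killing dual $t_\lambda$ is orthogonal to $\mathfrak d_\C$, i.e. $t_\lambda\in(i\mathfrak a)_\C$.

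The key geometric input is the orthogonality $\mathfrak d\perp i\mathfrak a$ with respect to the Killing form. This is immediate from the Cartan decomposition: $\mathfrak d\subset\mathfrak k$ and $\mathfrak a\subset\mathfrak p$, and the Killing form of $\mathfrak g$ makes $\mathfrak k$ and $\mathfrak p$ orthogonal, so $B(X,Y)=0$ for $X\in\mathfrak d$, $Y\in\mathfrak a$, whence $B(X,iY)=0$; extending complex-bilinearly, $B$ vanishes on $\mathfrak d_\C\times(i\mathfrak a)_\C$, and $B|_{\mathfrak h_\C}$ is nondegenerate. Consequently $(\lambda,\alpha)=(t_\lambda,t_\alpha)=0$ for every $\alpha\in\Phi^+$ with $\alpha|_{\mathfrak a}=0$, the corresponding factor in Weyl's formula is $1$, and deleting these trivial factors leaves exactly the asserted expression.

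The argument is essentially bookkeeping, and the only point requiring care is matching conventions: making precise the three identifications in play — restricted roots living on $\mathfrak a^*$, roots of $\mathfrak g_\C$ living on $\mathfrak h_\C^*$, and the Killing-form duality $\mathfrak h_\C\cong\mathfrak h_\C^*$ — and checking that the orthogonality of $\mathfrak k$ and $\mathfrak p$ inside the real form $\mathfrak g$ transfers to the orthogonality of $\mathfrak d_\C$ and $(i\mathfrak a)_\C$ inside $\mathfrak h_\C$, where the Killing form is a nondegenerate symmetric bilinear (not Hermitian) form. Once these identifications are pinned down, the vanishing $(\lambda,\alpha)=0$ and hence the reduction of Weyl's formula are automatic.
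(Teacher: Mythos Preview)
Your proposal is correct and follows essentially the same route as the paper: apply Weyl's dimension formula to the induced irreducible $\mathfrak g_\C$-representation, then observe that $(\lambda,\alpha)=0$ whenever $\alpha|_{\mathfrak a}=0$ because $\lambda|_{\mathfrak d}=0$ and $\mathfrak d$ is the orthogonal complement of $i\mathfrak a$ in $\mathfrak h$. You have simply unpacked the orthogonality $\mathfrak d\perp i\mathfrak a$ (via $\mathfrak k\perp\mathfrak p$) and the dual identifications in more detail than the paper does.
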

\begin{proof}
The irreducible spherical representation associated to $\lambda\in\Lambda^+$ induces the irreducible representation of $\mathfrak{g}_\C$ of the highest weight $\lambda\in\mathfrak{h}^*_\C$. Apply Weyl's dimension formula to this representation, we have 
$$d_\lambda=\frac{\prod_{\alpha\in\Phi^+}(\lambda+\rho', \alpha)}{\prod_{\alpha\in\Phi^+}(\rho', \alpha)}.$$ 
It suffices to observe that if $\alpha|_\mathfrak{a}=0$, then $(\lambda,\alpha)=0$, since $\lambda|_\mathfrak{d}=0$ where $\mathfrak{d}$ is the orthogonal complement of $i\mathfrak{a}$ in $\mathfrak{h}$. 
\end{proof}

This implies the following.

\begin{cor}\label{dmu}
Let $\gamma_1,\ldots,\gamma_r$ be the simple roots in $\Sigma^+$. 
Let $w_1,\ldots,w_r\in\mathfrak{a}^*$ be the fundamental weights such that 
$$\frac{(w_i,\gamma_j)}{(\gamma_j,\gamma_j)}=\delta_{ij}$$ for $1\leq i,j\leq r$. 
Then for $\lambda=n_1w_1+\cdots+n_rw_r\in\Lambda^+$, $d_\lambda$ is a polynomial in $n_1,\ldots,n_r$ of degree $d-r$, $d$ being the dimension of the symmetric space $U/K$. Furthermore, $d_\lambda$ has at least one linear factor of the form $a_jn_j+b_j$ for each $j=1,\ldots,r$, with $a_j,b_j$ constant. 
\end{cor}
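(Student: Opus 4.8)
\emph{Proof plan.} The plan is to read everything off the product formula for $d_\lambda$ established in the preceding lemma,
$$d_\lambda=\frac{\prod_{\alpha\in\Phi^+,\ \alpha|_\mathfrak{a}\neq 0}(\lambda+\rho',\alpha)}{\prod_{\alpha\in\Phi^+,\ \alpha|_\mathfrak{a}\neq 0}(\rho',\alpha)}.$$
Since $\rho'$ is the half-sum of the positive roots of $(\mathfrak{g}_\C,\mathfrak{h}_\C)$, each factor $(\rho',\alpha)$ with $\alpha\in\Phi^+$ is strictly positive, so the denominator is a fixed nonzero constant. In the numerator, writing $\lambda=n_1w_1+\cdots+n_rw_r$ and recalling (as in the proof of the preceding lemma, via the orthogonality $\mathfrak{d}\perp\mathfrak{a}$ for the Killing form) that $(\lambda,\alpha)$ depends only on $\alpha|_\mathfrak{a}$ and equals $\sum_i n_i(w_i,\alpha|_\mathfrak{a})$, each factor $(\lambda+\rho',\alpha)$ is an affine-linear function of $(n_1,\dots,n_r)$. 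Hence $d_\lambda$ is a product of $\#\{\alpha\in\Phi^+:\alpha|_\mathfrak{a}\neq 0\}$ affine-linear forms divided by a constant, so it is a polynomial in $n_1,\dots,n_r$; moreover each such form has degree exactly one, because $\alpha|_\mathfrak{a}\neq 0$ together with the fact that the $w_i$ form a basis of $\mathfrak{a}^*$ forces $(w_i,\alpha|_\mathfrak{a})\neq 0$ for some $i$.

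Next I would pin down the degree. There is no cancellation in top degree: the degree-one homogeneous part of $\prod_\alpha(\lambda+\rho',\alpha)$ is $\prod_\alpha(\lambda,\alpha)$, a product of nonzero linear forms, hence nonzero in the integral domain $\R[n_1,\dots,n_r]$. Thus $\deg d_\lambda=\#\{\alpha\in\Phi^+:\alpha|_\mathfrak{a}\neq 0\}$, and it remains to identify this count with $d-r$. Writing $m_\beta=\dim\mathfrak{g}_\beta$ for $\beta\in\Sigma$, the restricted root space decomposition of $\mathfrak{p}$ gives $d=\dim(U/K)=\dim\mathfrak{p}=r+\sum_{\beta\in\Sigma^+}m_\beta$. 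On the other hand, decomposing $\mathfrak{g}_\C$ under $\mathrm{ad}(\mathfrak{a}_\C)$ and comparing with the root space decomposition of $(\mathfrak{g}_\C,\mathfrak{h}_\C)$ shows that the complexified $\beta$-root space equals $\bigoplus_{\alpha\in\Phi,\ \alpha|_\mathfrak{a}=\beta}(\mathfrak{g}_\C)_\alpha$, so $m_\beta=\#\{\alpha\in\Phi:\alpha|_\mathfrak{a}=\beta\}$; and since the ordering on $\Phi$ is chosen compatible with that on $\Sigma$, any $\alpha$ with $\alpha|_\mathfrak{a}=\beta\in\Sigma^+$ lies in $\Phi^+$. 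Summing over $\beta\in\Sigma^+$ gives $\#\{\alpha\in\Phi^+:\alpha|_\mathfrak{a}\neq 0\}=\sum_{\beta\in\Sigma^+}m_\beta=d-r$.

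For the last assertion, fix $j\in\{1,\dots,r\}$. The simple restricted root $\gamma_j$ is, by the very definition of restricted roots, the restriction of some $\alpha\in\Phi$, and by compatibility of orderings $\alpha\in\Phi^+$; since $\alpha|_\mathfrak{a}=\gamma_j\neq 0$, the factor $(\lambda+\rho',\alpha)$ genuinely occurs in the product formula for $d_\lambda$. Using $(w_i,\gamma_j)=\delta_{ij}(\gamma_j,\gamma_j)$ one gets $(\lambda+\rho',\alpha)=(\lambda,\gamma_j)+(\rho',\alpha)=(\gamma_j,\gamma_j)\,n_j+(\rho',\alpha)$, which is a linear factor of the asserted form $a_jn_j+b_j$ with $a_j=(\gamma_j,\gamma_j)\neq 0$; running $j$ over $1,\dots,r$ produces one such factor for each variable. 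I expect the main obstacle to be the Lie-theoretic bookkeeping in the second step: matching the multiplicities $m_\beta$ with the fibers of the restriction map $\Phi\to\Sigma\cup\{0\}$ (including the compatibility of positivity systems), and keeping straight the orthogonal splitting $\mathfrak{h}=\mathfrak{d}\oplus i\mathfrak{a}$ so that $(\lambda,\alpha)$ sees only $\alpha|_\mathfrak{a}$. The polynomial-algebra points — affine-linearity of the factors and absence of top-degree cancellation — are routine once this structure is set up.
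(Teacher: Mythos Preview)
Your argument is correct and is essentially the paper's own proof: both read off the degree and the linear factors directly from the product formula of the preceding lemma, identifying $\#\{\alpha\in\Phi^+:\alpha|_\mathfrak{a}\neq 0\}=\sum_{\beta\in\Sigma^+}m_\beta=d-r$ and noting that each $\alpha\in\Phi^+$ with $\alpha|_\mathfrak{a}=\gamma_j$ contributes a factor $n_j(\gamma_j,\gamma_j)+(\rho',\alpha)$. Your write-up is simply more explicit about the bookkeeping (the top-degree noncancellation and the multiplicity count via the restriction map $\Phi\to\Sigma\cup\{0\}$) than the paper's terse version.
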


\begin{proof}
By the above Lemma, the degree of $d_\lambda$ in $\lambda$ equals the number of restricted roots counted with multiplicities, which is equal to $d-r$, and we have  
\begin{align}
d_\lambda=\frac{\prod_{\alpha\in\Phi^+,\alpha|_\mathfrak{a}\neq \gamma_j,\forall j}(\lambda+\rho',\alpha)}{\prod_{\alpha\in\Phi^+, \alpha|_\mathfrak{a}\neq 0}(\rho', \alpha)}\cdot\prod_{1\leq j\leq r}\prod_{\alpha\in\Phi^+,\alpha|_\mathfrak{a}=\gamma_j}(n_j(\gamma_j,\gamma_j)+(\rho',\alpha)).
\end{align}
Then the second claim is clear. 
\end{proof}

For $k\in K$ and $u\in U$, write 
\begin{align*}
\mathcal{H}(uk)=H_0+iH, \ \t{for }H_0,H\in\mathfrak{a}. 
\end{align*}
By \eqref{phase}, 
\begin{align}\label{phase1}
\lambda(H_0)\leq 0,\ \forall\lambda\in\Lambda^+.
\end{align}
Perform Weyl's differencing technique to \eqref{defnofkappaN}, we have
\begin{align*}
|\kappa_{N}|^2&=\sum_{\lambda_1\in\Lambda^+,\lambda_2\in\Lambda^+}
\phi\left(\frac{|\lambda_1|_\rho}{N}\right)\phi\left(\frac{|\lambda_2|_\rho}{N}\right)e^{-it(|\lambda_1|^2_\rho-|\lambda_2|^2_\rho)+(\lambda_1+\lambda_2)(H_0)+i(\lambda_1-\lambda_2)(H)}d_{\lambda_1}d_{\lambda_2}\\
&=\sum_{\substack{\lambda\in\Lambda^+\\(\lambda=\lambda_1+\lambda_2)}}
e^{it|\lambda|^2+\lambda(H_0)-i\lambda(H)}
\sum_{\substack{\mu\in\Lambda^+\cap (\lambda-\Lambda^+)\\ (\mu=\lambda_1)}}
\phi\left(\frac{|\mu|_\rho}{N}\right)\phi\left(\frac{|\lambda-\mu|_\rho}{N}\right)e^{2i[\mu(H)-t(\mu,\lambda+2\rho)]}d_{\mu}d_{\lambda-\mu}\\
&\lesssim \sum_{\lambda\in\Lambda^+,|\lambda|\lesssim N}
\left|
\sum_{\mu\in\Lambda^+\cap (\lambda-\Lambda^+)}
\phi\left(\frac{|\mu|_\rho}{N}\right)\phi\left(\frac{|\lambda-\mu|_\rho}{N}\right)e^{2i[\mu(H)-t(\mu,\lambda+2\rho)]}d_{\mu}d_{\lambda-\mu}
\right|. \numberthis \label{sum}
\end{align*}

Here, we have crucially used \eqref{phase1}. Recall that $w_1,w_2,\ldots,w_r$ denote the fundamental dominant weights so that 
$$\Lambda^+=\Z_{\geq 0}w_1+\cdots+\Z_{\geq 0}w_r$$ 
and 
$$\Lambda=\Z w_1+\cdots+\Z w_r.$$
For $\lambda\in\Lambda^+$, write 
\begin{align*}
\lambda=n_1^\lambda w_1+\cdots+n_r^\lambda w_r. 
\end{align*}
Then  
\begin{align*}
\mu\in\Lambda^+\cap(\lambda-\Lambda^+) \t{ if and only if }\mu=n_1w_1+\cdots+n_rw_r, \ 0\leq n_j\leq n_j^\lambda, \ j=1,\ldots, r. 
\end{align*}
For $\mu=n_1w_1+\cdots+n_rw_r$, let 
\begin{align*}
f(n_1,\ldots,n_r)=f(\mu)=f(\mu,N,\lambda):=\phi\left(\frac{|\mu|_\rho}{N}\right)\phi\left(\frac{|\lambda-\mu|_\rho}{N}\right)d_{\mu}d_{\lambda-\mu}.
\end{align*}
Define the difference operators with respect to $\mu$
\begin{align*}
D_jf(\mu):=f(\mu+w_j)-f(\mu), \ j=1,\ldots,r. 
\end{align*}

\begin{lem} \label{difference}

Let $n,n_1,\ldots,n_r\in\Z_{\geq 0}$ and $j_1,\ldots,j_n\in\{1,\ldots,r\}$. We have the following facts. 

\noindent (i) $
|D_{j_1}D_{j_2}\cdots D_{j_n}f(n_1,\ldots,n_r)|\lesssim_n N^{2d-2r-n}.
$

\noindent (ii) Fix $J\subset\{1,\ldots,r\}$. Suppose $j_k\notin J$, for any $k=1,\ldots,n$. Suppose for each $j\in J$, either $n_j=0$ or $n_j=n_j^\lambda+1$. Then 
\begin{align*}
|D_{j_1}D_{j_2}\cdots D_{j_n}f(n_1,\ldots,n_r)|\lesssim_n N^{2d-2r-n-|J|}.
\end{align*}
\end{lem}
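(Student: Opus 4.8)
The plan is to establish both estimates purely by elementary analysis of the function $f(n_1,\ldots,n_r)=\phi(|\mu|_\rho/N)\phi(|\lambda-\mu|_\rho/N)d_\mu d_{\lambda-\mu}$ together with the structure of $d_\mu$ from Corollary \ref{dmu}. The first observation is a size bound: since $\mu$ ranges over $\Lambda^+$ with $|\mu|_\rho\lesssim N$ (forced by the cutoff $\phi(|\mu|_\rho/N)$, which vanishes unless $|\mu|_\rho\asymp N$) and likewise $|\lambda-\mu|_\rho\lesssim N$, and since $d_\mu$ is a polynomial of degree $d-r$ in the coordinates $n_1,\ldots,n_r$ of $\mu$ (Corollary \ref{dmu}), we get $|d_\mu|\lesssim N^{d-r}$, $|d_{\lambda-\mu}|\lesssim N^{d-r}$, and the cutoffs are $O(1)$, so $|f|\lesssim N^{2d-2r}$, which is the $n=0$ case of (i).

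For part (i) with $n\geq 1$, I would argue that each application of a difference operator $D_j$ gains a factor $N^{-1}$. The clean way to see this is to note that on the relevant range, $f$ extends to a smooth function $F$ of a real variable $x\in\R^r$ (replace the integer coordinates $n_i$ of $\mu$ by reals; $\phi$ is smooth, and $d_\mu$, $d_{\lambda-\mu}$ are polynomials hence smooth) which satisfies, by the chain rule and the scaling $|\mu|\lesssim N$, the derivative bounds $|\partial^\alpha F(x)|\lesssim_{|\alpha|} N^{2d-2r-|\alpha|}$ for all multi-indices $\alpha$: each derivative either hits a $\phi$ factor (gaining $1/N$ from its argument being scaled by $N$) or lowers the degree of one of the polynomial factors by one (gaining $1/N$ since the polynomial has size $N^{\deg}$ on this range). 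Since $D_{j_1}\cdots D_{j_n}f$ is an iterated finite difference, it equals an integral of $\partial_{j_1}\cdots\partial_{j_n}F$ over a unit cube (discrete Taylor / mean value), so it inherits the bound $\lesssim_n N^{2d-2r-n}$. This gives (i).

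For part (ii), the extra saving $N^{-|J|}$ comes from the constraints on the coordinates indexed by $J$. The key structural input is the last sentence of Corollary \ref{dmu}: $d_\mu$ has, for each $j$, a linear factor of the form $a_j n_j + b_j$, and similarly $d_{\lambda-\mu}$ has, for each $j$, a linear factor $a_j'(n_j^\lambda - n_j)+b_j'$ (apply the corollary to $\lambda-\mu$, whose $j$-th coordinate is $n_j^\lambda-n_j$). Now fix $j\in J$. If $n_j=0$, then the linear factor $a_j n_j+b_j=b_j$ of $d_\mu$ is a \emph{constant} rather than a quantity of size $\asymp N$, so $d_\mu$ on the slice $n_j=0$ has size only $\lesssim N^{d-r-1}$; if instead $n_j=n_j^\lambda+1$, then the factor $a_j'(n_j^\lambda-n_j)+b_j' = -a_j'+b_j'$ of $d_{\lambda-\mu}$ is constant, so $d_{\lambda-\mu}$ has size $\lesssim N^{d-r-1}$ on that slice. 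Either way we lose one power of $N$ per element of $J$, giving $|f|\lesssim N^{2d-2r-|J|}$ on the relevant slice; and because the difference operators $D_{j_1},\ldots,D_{j_n}$ only involve indices \emph{outside} $J$, they do not move $\mu$ off the slices $\{n_j=0\}$ or $\{n_j=n_j^\lambda+1\}$, so the smooth-extension/finite-difference argument from part (i), run on the lower-dimensional slice where $f$ already has size $N^{2d-2r-|J|}$, yields $|D_{j_1}\cdots D_{j_n}f|\lesssim_n N^{2d-2r-|J|-n}$.

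The main obstacle, and the place requiring the most care, is making the derivative bounds $|\partial^\alpha F|\lesssim_{|\alpha|} N^{2d-2r-|\alpha|}$ genuinely uniform in $\lambda$ and $\mu$ over the whole admissible range, and in particular handling the interaction of the two $\phi$-cutoffs (which constrain $\mu$ to an annular region but do not individually give an upper bound on every coordinate $n_j$) with the polynomial factors — one must check that on $\operatorname{supp}\phi(|\mu|_\rho/N)\phi(|\lambda-\mu|_\rho/N)$ one indeed has $|n_j|\lesssim N$ and $|n_j^\lambda-n_j|\lesssim N$ for every $j$, which follows since $|\cdot|_\rho$ is comparable to the Killing norm $|\cdot|$ on $\mathfrak{a}^*$ and the $n_j$ are (up to bounded factors) the coordinates of $\mu$ in a basis, so $|n_j|\lesssim|\mu|\lesssim N$. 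Once this uniformity is in place, both parts follow from the same finite-difference-to-derivative passage with no further input.
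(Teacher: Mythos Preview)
Your proof is correct and follows essentially the same approach as the paper: both use that $d_\mu$, $d_{\lambda-\mu}$ are polynomials of degree $d-r$ with the special linear factors from Corollary~\ref{dmu}, and both observe that these linear factors are $O(1)$ on the relevant slices and are untouched by the difference operators indexed outside $J$. The only cosmetic difference is that the paper works directly with the discrete Leibniz rule for the $D_j$'s, whereas you pass to a smooth extension and invoke the integral (mean-value) representation of iterated finite differences to transfer the derivative bounds $|\partial^\alpha F|\lesssim N^{2d-2r-|\alpha|}$; both routes yield the same estimates with the same inputs.
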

\begin{proof}
Since $\phi$ is a bump function, we may assume $|\mu|,|\lambda|\lesssim N$. The Leibniz rule for taking difference operators is
\begin{align*}
D_j\left(\prod_{i=1}^nf_i\right)=
\underbrace{\sum_{k=1}^l (D_jf_k)\prod_{i\neq k}f_i}_{\t{``main term''}}+
\underbrace{\sum_{l=2}^n\sum_{1\leq k_1<\cdots<k_l\leq n}
(D_jf_{k_1})\cdots (D_jf_{k_l})\cdot \prod_{\substack{i \neq k_1,\cdots, k_l\\ 1\leq i\leq n}}f_i}_{\t{``higher order terms''}}.
\end{align*}
Then the inequality in (i) is a consequence of the fact from Corollary \ref{dmu} that $d_\mu$ and $d_{\lambda-\mu}$ are polynomial functions in $\mu$ of degree $d-r$. For (ii), compared with (i), the extra decay factor of $N^{-|J|}$ results from the fact from Corollary \ref{dmu} that $d_\mu$ (resp. $d_{\lambda-\mu}$) has a linear factor of the form $a_jn_j+b_j$ (resp. $a_j(n_j^\lambda-n_j)+b_j$) for each $j=1,\ldots,r$. Indeed, for each $j\in J$, as $n_j=0$ (resp. $n_j=n_j^\lambda+1$), this linear factor is bounded $\lesssim 1$. It is assumed that no $D_{j}$ ($j\in J$) appears in the inequality in (ii), thus for each $j\in J$, this linear factor survives the difference operators and gives a contribution of $\lesssim 1$ compared to that of $\lesssim N$ in (i), so it provides an extra decay factor of $N^{-1}$ compared to (i), and together yielding an extra decay factor of $N^{-|J|}$.   
\end{proof}

Now let $\kappa^{\lambda}$ be the sum in \eqref{sum} inside of the absolute value. Then 
\begin{align*}
\kappa^{\lambda}=\sum_{0\leq n_r\leq n^\lambda_r}e^{in_r\theta_r}\cdots\sum_{0\leq n_1\leq n^\lambda_1} e^{in_1\theta_1} f(\mu)
\end{align*}
where 
$
\theta_j=\theta_{j}(t,H,\lambda):=2[w_j(H)-t(w_j,\lambda+2\rho)], \ j=1,\ldots,r
$. 
We perform summation by parts on $\kappa^\lambda$ choicefully: 
\begin{itemize} 
\item If $|1-e^{i\theta_1}|\geq  N^{-1}$, 
we write
\begin{align*}
\sum_{0\leq n_1\leq n^\lambda_1} e^{in_1\theta_1} f(\mu)
&=\frac{1}{1-e^{i\theta_1}}\sum_{0\leq n_1\leq n_1^\lambda}e^{i(n_1+1)\theta_1}D_1f(\mu)\\
&\ \ +\frac{1}{1-e^{i\theta_1}}f(0,n_2,\ldots,n_r)-\frac{e^{i(n_1^\lambda+1)\theta_1}}{1-e^{i\theta_1}}f(n_1^\lambda+1,n_2,\ldots,n_r)\\
&=\underbrace{\frac{1}{(1-e^{i\theta_1})^2}\sum_{0\leq n_1\leq n_1^\lambda}e^{i(n_1+2)\theta_1}D_1^2f(\mu)}_{l_1\t{ such terms appear in }\kappa_{\t{term}}^\lambda}\\
&\ \ +\underbrace{\frac{e^{i\theta_1}}{(1-e^{i\theta_1})^2}D_1f(0,n_2,\ldots,n_r)}_{l_2\t{ such terms appear in }\kappa_{\t{term}}^\lambda}-\underbrace{\frac{e^{i(n_1^\lambda+2)\theta_1}}{(1-e^{i\theta_1})^2}D_1f(n_1^\lambda+1,n_2,\ldots,n_r)}_{l_3\t{ such terms appear in }\kappa_{\t{term}}^\lambda}\\
&\ \ +\underbrace{\frac{1}{1-e^{i\theta_1}}f(0,n_2,\ldots,n_r)}_{l_4\t{ such terms appear in }\kappa_{\t{term}}^\lambda}-\underbrace{\frac{e^{i(n_1^\lambda+1)\theta_1}}{1-e^{i\theta_1}}f(n_1^\lambda+1,n_2,\ldots,n_r)}_{l_5\t{ such terms appear in }\kappa_{\t{term}}^\lambda}, \numberthis \label{errorterms}
\end{align*}
\item Otherwise, no operation is performed on $\underbrace{\sum_{0\leq n_1\leq n^\lambda_1} e^{in_1\theta_1} f(\mu)}_{l_6\t{ such terms appear in }\kappa_{\t{term}}^\lambda}$. 
\end{itemize}
Next similarly we perform summation by parts choicefully for each of the layers of summation 
$$\sum_{0\leq n_j\leq n_j^{\lambda}}e^{in_j\theta_j}\cdots, \ j=2,\ldots,r.$$ 
What we end up with is a sum of $\leq 5^r$ terms, each term of the form 
\begin{align*}
\kappa^\lambda_{\t{term}}=\frac{\sum_{\substack{0\leq n_{j_k}\leq n_{j_k}^\lambda\\ 1\leq k\leq l_1\t{ or }l_1+l_2+l_3+l_4+l_5+1\leq k\leq r}} z\cdot D_{j_1}^2\cdots D_{j_{l_1}}^2 D_{j_{l_1+1}}\cdots D_{j_{l_1+l_2+l_3}}f(\mu_*)}{\prod_{1\leq k\leq l_1+l_2+l_3}(1-e^{i\theta_{j_k}})^2\prod_{l_1+l_2+l_3+1\leq k\leq l_1+l_2+l_3+l_4+l_5}(1-e^{i\theta_{j_k}})},
\end{align*}
where 
$$l_1+l_2+l_3+l_4+l_5+l_6=r$$ 
is a partition of $r$ into six nonnegative integers, and 
\begin{align}\label{mustar}
\mu_*=\sum_{1\leq k\leq l_1} n_{j_k}w_{j_k} +\sum_{\substack{l_1+l_2+1\leq k\leq l_1+l_2+l_3,\\ \t{or }l_1+l_2+l_3+l_4+1\leq k\leq l_1+l_2+l_3+l_4+l_5}} (n_{j_k}^{\lambda}+1)w_{j_k},
\end{align}
with 
\begin{align*}
|z|=1.
\end{align*}

Using \eqref{mustar}, apply (ii) of Lemma \ref{difference} for $J=\{j_k,\ l_1+l_2+l_3+1\leq k\leq l_1+l_2+l_3+l_4+l_5\}$, we get 

\begin{align*}
|\kappa^\lambda_{\t{term}}|&\lesssim \frac{
N^{l_1+l_6+2d-2r-2l_1-l_2-l_3-l_4-l_5}}{\prod_{1\leq k\leq l_1+l_2+l_3}|1-e^{i\theta_{j_k}}|^2\prod_{l_1+l_2+l_3+1\leq k\leq l_1+l_2+l_3+l_4+l_5}|1-e^{i\theta_{j_k}}|}\\
&\lesssim \frac{
N^{l_1+l_6+2d-2r-2l_1-l_2-l_3-l_4-l_5}}{\prod_{1\leq k\leq l_1+l_2+l_3}|1-e^{i\theta_{j_k}}|^2\prod_{l_1+l_2+l_3+1\leq k\leq l_1+l_2+l_3+l_4+l_5}|1-e^{i\theta_{j_k}}|^2}\\
&\lesssim N^{2d-3r}\prod_{j=1}^r\frac{1}{\max\{\frac{1}{N},|1-e^{i\theta_j}|\}^2}\\
&\lesssim N^{2d-3r}\prod_{j=1}^r\frac{1}{\max\{\frac{1}{N},\|\theta_j/2\pi\|\}^2}.
\end{align*}
The same estimates hold for $|\kappa^\lambda|$. 
Now a good choice of $\mathcal{T}$ in \eqref{D} also makes 
$-2(w_j,\lambda+2\rho)\in \frac{2\pi}{\mathcal{T}}\Z$, $j=1,\ldots,r$.  
Let 
\begin{align*}
m_j=-2(w_j,\lambda+2\rho)\cdot \frac{\mathcal{T}}{2\pi}, \ j=1,\ldots,r.
\end{align*} 
Since the map $\Lambda\ni\lambda\mapsto (m_1,\ldots,m_r)\in \Z^r$ is one-one, we can write \eqref{sum} into 
\begin{align*}
|\kappa_{N}|^2\lesssim N^{2d-3r}\prod_{j=1}^r\left(\sum_{|m_j|\lesssim N}\frac{1}{\max\{\frac{1}{N},\|m_jt/\mathcal{T}+w_j(H)/\pi\|\}^2}\right). 
\end{align*}
By a standard estimate as in deriving the classical Weyl sum estimate in one dimension, we get 
\begin{align*}
\sum_{|m_j|\lesssim N}\frac{1}{\max\{\frac{1}{N},\|m_jt/\mathcal{T}+w_j(H)/\pi\|\}^2}\lesssim\frac{N^3}{[\sqrt{q}(1+N\|\frac{t}{\mathcal{T}}-\frac{a}{q}\|^{1/2})]^2}
\end{align*}
for $\frac{t}{\mathcal{T}}$ lying on the major arc $\mathcal{M}_{a,q}$. Hence
\begin{align*}
|\kappa_{N}(t,u,k)|^2\lesssim \frac{N^{2d}}{[\sqrt{q}(1+N\|\frac{t}{\mathcal{T}}-\frac{a}{q}\|^{1/2})]^{2r}}.
\end{align*}
An inspection of the above argument shows that this estimate holds uniformly for $u\in U$ and $k\in K$. This finishes the proof of Proposition \ref{dispersive} and thus \eqref{KNtH}.

\begin{rem}
The last piece of information in Corollary \ref{dmu} about linear factors of $d_\mu$ is used to deal with the last two terms in \eqref{errorterms}. Without it, one could only get the bound 
$$|\kappa^\lambda|\lesssim N^{2d-2r}\prod_{j=1}^r\frac{1}{\max\{\frac{1}{N},\|\theta_j/2\pi\|\}},$$
which would only yield \eqref{kappaN} with an $N^\varepsilon$ loss. 
\end{rem}

\subsection{Farey dissection}\label{farey} 
With the kernel bound \eqref{KNtH} at hand, we may establish 
Theorem \ref{Main} as well as Theorem \ref{eigenfunctionbound}
using the circle method of Farey dissection, as we did in \cite{Zha201} on the special case of compact Lie groups. 
For expository purposes, in the next few sections we include these proofs. The circle method of Farey dissection is a slightly simpler version of the circle method than that originally applied in \cite{Bou93}, in that it does not involve any minor arcs. We review it in this section. 
Let $n$ be an integer and consider the Farey sequence 
$$\left\{\frac{a}{q},\ a\in\mathbb{Z}_{\geq 0},\ q\in\mathbb{Z}_{\geq 1}, \ (a,q)=1, \ a<q, \ q\leq n\right\}$$ of order $n$ on the unit circle. For each three consecutive fractions $\frac{a_l}{q_l}$, $\frac{a}{q}$, $\frac{a_r}{q_r}$ in the sequence, consider the Farey arc 
$$\mathcal{M}_{a,q}=\left[\frac{a_l+a}{q_l+q},\frac{a+a_r}{q+q_r}\right]$$
around $\frac{a}{q}$. The Farey dissection $\bigsqcup_{a,q} \mathcal{M}_{a,q}$ of order $n$ of the unit circle has the important uniformity property that both $\left[\frac{a_l+a}{q_l+q},\frac{a}{q}\right]$ and $\left[\frac{a}{q},\frac{a+a_r}{q+q_r}\right]$ are of length $\asymp \frac{1}{qn}$ \cite[Theorem 35]{HW08}. We make a further dissection of the unit circle as follows. Fix a large number $N$ and let $Q$ be dyadic integers, i.e. powers of 2, between 1 and $N$. Consider the Farey sequence of order $\lfloor N\rfloor$. For $Q\leq q<2Q$, let $L$ denote dyadic integers between $Q$ and $N$, and we decompose the Farey arc into a disjoint union 
$$\mathcal{M}_{a,q}=\bigsqcup_{Q\leq L\leq N}\mathcal{M}_{a,q,L}$$
where $\mathcal{M}_{a,q,L}$ is an interval on the unit circle of the form $\left\|t-\frac{a}{q}\right\|\asymp \frac{1}{NL}$, except when $L$ is the largest dyadic integer $\leq N$, $\mathcal{M}_{a,q,L}$ is defined as an interval of the form $\left\|t-\frac{a}{q}\right\|\lesssim \frac{1}{N^2}$. Let $\mathbbm{1}_{Q,L}$ denote the indicator function of the subset 
$$\mathcal{M}_{Q,L}=\bigsqcup_{0\leq a<q, \ (a,q)=1, \ Q\leq q<2Q}\mathcal{M}_{a,q,L}$$
of the unit circle, then we have a partition of unity 
$$1=\sum_{Q,L}\mathbbm{1}_{Q,L}\left(\frac{t}{\mathcal{T}}\right)$$
on the circle $\T=\mathbb{R}/\mathcal{T}\Z$. 
Let $\widehat{\mathbbm{1}_{Q,L}}$ denote the Fourier transform of $\mathbbm{1}_{Q,L}\left(\frac{t}{\mathcal{T}}\right)$ on $\T$ such that 
$$\mathbbm{1}_{Q,L}\left(\frac{t}{\mathcal{T}}\right)=\sum_{n\in\frac{2\pi }{\mathcal{T}}\Z}
\widehat{\mathbbm{1}_{Q,L}}(n)e^{int},$$
then
\begin{align}\label{1QM}
\|\widehat{\mathbbm{1}_{Q,L}}\|_{l^\infty}\lesssim \|\mathbbm{1}_{Q,L} \|_{L^1(\mathbb{T})}\lesssim\frac{Q^2}{NL}. 
\end{align}

\subsection{Proof of Theorem \ref{Main}}
We write 
$$\mathscr{K}_N(t,x)=\sum_{Q,L}\mathscr{K}_{Q,L}(t,x), \ \mathscr{K}_{Q,L}(t,x):=\mathscr{K}_N(t,x)\cdot\mathbbm{1}_{Q,L}\left(\frac{t}{\mathcal{T}}\right),$$
for $(t,x)\in\T\times M$. Let $F: \T \times M\to \C$ be a continuous function. Let $*$ denote the convolution on the product symmetric space $\T\times M$. 
Using \eqref{KNtH}, we have 
\begin{align*}
\|F*\mathscr{K}_{Q,L}\|_{L^{\infty}(\T\times M)}&\leq \|\mathscr{K}_{Q,L}\|_{L^{\infty}(\T\times M)}\|F\|_{L^{1}(\T\times M)}\\
&\lesssim N^{d-\frac{r}{2}}L^{\frac{r}{2}}Q^{-\frac{r}{2}}\|F\|_{L^{1}(\T\times M)}. \numberthis \label{2p'to2p}
\end{align*}
On the other hand, 
as a $K$-invariant function, $\mathscr{K}_{Q,L}$ has its Fourier transform $\widehat{\mathscr{K}_{Q,L}}$ on $\mathbb{T}\times M$ computed as follows (see \eqref{Fouriertransform})
$$\widehat{\mathscr{K}_{Q,L}}(n,\mu)=\phi(|\mu|_\rho/N)\widehat{\mathbbm{1}_{Q,L}}\left(n+|\mu|_\rho^2\right), \ \t{for }(n,\mu)\in \frac{2\pi\Z}{\mathcal{T}}\times \Lambda^+.$$
By \eqref{1QM}, we have 
$$\sup_{n,\mu}\left|\widehat{\mathscr{K}_{Q,L}}(n,\mu)\right|\lesssim \frac{Q^2}{NL}.$$
As a consequence, we have 
\begin{align}\label{2to2}
\|F*\mathscr{K}_{Q,L}\|_{L^{2}(\T\times U)}
\lesssim \frac{Q^2}{NL}\|F\|_{L^2(\T\times U)}. 
\end{align}
Interpolating \eqref{2p'to2p} with \eqref{2to2} for 
$\frac{\theta}{2}=\frac{1}{p}$, 
we get 
\begin{align*}
\|F*\mathscr{K}_{Q,L}\|_{L^p(\T\times U)}\lesssim N^{\left(d-\frac{r}{2}\right)(1-\theta)-\theta}L^{\frac{r}{2}(1-\theta)-\theta}Q^{-\frac{r}{2}(1-\theta)+2\theta}\|F\|_{L^{p'}(\T\times U)}.
\end{align*}
We require the exponent of $Q$ satisfy 
\begin{align*}
-\frac{r}{2}(1-\theta)+2\theta<0 \Leftrightarrow  \theta<\frac{r}{4+r},
\end{align*}
which implies the exponent of $M$ satisfies 
$
\frac{r}{2}(1-\theta)-\theta>0
$.
Summing over the dyadic integers $M$ and $Q$, we get 
\begin{align*}
\|F*\mathscr{K}_N\|_{L^p(\T\times U)}&\lesssim \sum_{1\leq Q\leq N}\sum_{Q\leq L\leq N}\|F*\mathscr{K}_{Q,L}\|_{L^p(\T\times U)}\lesssim N^{d(1-\theta)-2\theta}\|F\|_{L^{p'}(\T\times U)}=N^{d-\frac{2(d+2)}{p}}\|F\|_{L^{p'}(\T\times U)},
\end{align*}
provided 
$$\frac{1}{p}=\frac{\theta}{2}<\frac{r}{2(4+r)} \Leftrightarrow p>2+\frac{8}{r}.$$

\subsection{Proof of Theorem \ref{eigenfunctionbound}}
We also include the proof of Theorem \ref{eigenfunctionbound} for the convenience of the reader even though it is not applied elsewhere in this paper. Let $f$ be an eigenfunction of eigenvalue $-N^2$. Then $N=|\mu|_\rho$ for some $\mu\in\Lambda^+$.   
Set 
$$\mathcal{K}_N=\sum_{\mu\in\Lambda^+,|\mu|_\rho=N}d_\mu \chi_\mu.$$ 
Then it is clear that $f=f*\mathcal{K}_N$. By an argument of $TT^*$, it suffices to establish bounds of the form 
$$\|f*\mathcal{K}_N\|_{L^p(M)}\lesssim N^{\frac{d-2}{2}-\frac{d}{p}}\|f\|_{L^{p'}(M)}.$$
Let $\mathscr{K}_{N}$ be again the mollified Schr\"odinger kernel \eqref{mollifiedkernel}, and here we assume that the cutoff function $\phi$ satisfies $\phi(1)=1$. Then we may write 
$$\mathcal{K}_N=\frac{1}{\mathcal{T}}\int_0^\mathcal{T}\mathscr{K}_N(t,\cdot)e^{it N^2}\ dt.$$
Using Farey dissection again, we decompose 
$$\mathcal{K}_N=\sum_{Q,L}\mathcal{K}_{Q,L},$$
where 
$$\mathcal{K}_{Q,L}=\int_{\mathcal{M}_{Q,L}}\mathscr{K}_{Q,L}(t,\cdot) e^{it N^2}\ d\left(\frac{t}{\mathcal{T}}\right).$$
By the kernel bound \eqref{KNtH}, Minkowski's integral inequality, and the estimate that the length $|\mathcal{M}_{Q,L}|$ of $\mathcal{M}_{Q,L}$ is bounded by $\lesssim\frac{Q^2}{NL}$, we have 
\begin{align}\label{KQMestimate}
\|\mathcal{K}_{Q,L}\|_{L^\infty(M)}\lesssim N^{d-\frac{r}{2}-1}L^{\frac{r}{2}-1}Q^{-\frac{r}{2}+2},
\end{align}
which implies
\begin{align}\label{2u'to2u}
\|f*\mathcal{K}_{Q,L}\|_{L^{\infty}(M)}\lesssim N^{d-\frac{r}{2}-1}L^{\frac{r}{2}-1}Q^{-\frac{r}{2}+2}\|f\|_{L^{1}(M)}. 
\end{align}
On the other hand, the Fourier transform of $\mathcal{K}_{Q,L}$ on $M$ equals (see \eqref{FT})
$$\widehat{\mathcal{K}_{Q,L}}(\mu)=\phi(|\mu|_\rho/N)\int_{\mathcal{M}_{Q,L}}e^{it(N^2-|\mu|_\rho^2)}\ d\left(\frac{t}{\mathcal{T}}\right), \ \t{for all }\mu\in\Lambda^+.$$
Thus 
$$\sup_\mu \left|\widehat{\mathcal{K}_{Q,L}}(\mu)\right|\lesssim |\mathcal{M}_{Q,L}|\lesssim \frac{Q^2}{NL},$$
which implies 
\begin{align}\label{2to22}
\|f*\mathcal{K}_{Q,L}\|_{L^2(U)}
\lesssim 
\frac{Q^2}{NL}\|f\|_{L^2(U)}.
\end{align}
Interpolating \eqref{2u'to2u} with \eqref{2to22} for 
$\frac{\theta}{2}=\frac{1}{p}$, 
we get 
\begin{align*}
\|f*\mathcal{K}_{Q,L}\|_{L^p(U)}\lesssim N^{\left(d-\frac{r}{2}-1\right)(1-\theta)-\theta} L^{\left(\frac{r}{2}-1\right)(1-\theta)-\theta}Q^{\left(-\frac{r}{2}+2\right)(1-\theta)+2\theta}\|f\|_{L^{p'}(U)}. 
\end{align*}
We require the exponent of $Q$ be negative, i.e., 
$$\left(-\frac{r}{2}+2\right)(1-\theta)+2\theta<0\Leftrightarrow \theta<\frac{r-4}{r},$$
which implies that the exponent $\left(\frac{r}{2}-1\right)(1-\theta)-\theta$ of $M$ is positive. Summing over the dyadic integers $M$ and $Q$, we have
\begin{align*}
\|f*\mathcal{K}_\lambda\|_{L^p(U)}\lesssim N^{\left(d-2\right)(1-\theta)-2\theta}\|f\|_{L^{p'}(U)}=N^{d-2-\frac{2d}{p}}\|f\|_{L^{p'}(U)},
\end{align*} 
provided 
$$\frac{1}{p}=\frac{\theta}{2}
<\frac{r-4}{2r}\Leftrightarrow p>2+\frac{8}{r-4}.$$

\section{Bilinear Strichartz estimates}\label{bilinear}

In this section, we discuss an approach to bilinear Strichartz estimates, aiming at establishing local well-posedness results for the cubic nonlinearity. Even though the following discussion should in principle apply equally well to tri- and multi-linear Strichartz estimates which would give similar well-posedness results for polynomial nonlinearities of any odd degree, they are more technical to establish which we choose to avoid in this paper. For compact symmetric spaces of higher rank, we argue that the following conjecture on bilinear estimates for the joint eigenfunctions of invariant differential operators should be the key replacement or generalization of those for the Laplace-Beltrami eigenfunctions as discussed in Burq, G\'erard, and Tzvetkov's work \cite{BGT05,BGT05m}. Let $M$ be a symmetric space of compact type. Then after taking a finite cover of $M$ if necessary, it becomes a product of irreducible components.

\begin{conj}\label{bilineareigen}
Suppose $M$ is a symmetric space of compact type of dimension $d$ and rank $r$. \\
(i) For each irreducible component $M_0$ of $M$, suppose that its dimension $d_0$ and rank $r_0$ satisfy $d_0\geq 3r_0$. 
Then for any $f,g\in L^2(M)$ and $\lambda_1,\lambda_2\in\Lambda^+$, we have 
\begin{align}\label{bilinearbilinear}
\|P_{\lambda_1} f\cdot P_{\lambda_2} g\|_{L^2(M)}\lesssim_{\varepsilon} (\min(|\lambda_1|,|\lambda_2|)+1)^{\frac{d}{2}-r+\varepsilon}\|f\|_{L^2(M)}\|g\|_{L^2(M)}. 
\end{align}
(ii) If we assume the stronger condition $d_0> 3r_0$ for each irreducible component $M_0$, then the above estimate holds without the $\varepsilon$-loss. 
\end{conj}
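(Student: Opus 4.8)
Since \eqref{bilinearbilinear} is stated as a conjecture, what follows is a proposal; I can carry it out unconditionally only in the special cases where it is actually needed in this paper. The plan has three stages. \emph{First, reductions.} Passing to a finite cover and restricting to deck-invariant functions, assume $M=M_1\times\cdots\times M_k$ is a product of irreducible symmetric spaces of compact type. Since $P_{(\lambda^{(1)},\dots,\lambda^{(k)})}=P_{\lambda^{(1)}}\otimes\cdots\otimes P_{\lambda^{(k)}}$, the quantity $P_{\lambda_1}f\cdot P_{\lambda_2}g$ splits componentwise, and one reduces \eqref{bilinearbilinear} on $M$ to the same estimate on each irreducible factor by a standard tensoring argument: one uses the factor estimates in their stated form for general $L^2$ functions, together with the operator-norm bound for Hadamard products of positive-semidefinite Gram matrices (which is why the hypothesis must be imposed on every irreducible component, and why the statement is phrased for general $f,g$). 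It thus suffices to take $M$ irreducible of dimension $d$ and rank $r$ with $d\geq 3r$, with $f=P_{\lambda_1}f$ and $g=P_{\lambda_2}g$ honest joint eigenfunctions and $|\lambda_2|\leq|\lambda_1|$; set $N=|\lambda_2|+1$.

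\emph{Second, frequency decomposition.} Set $S=\{\nu\in\Lambda^+:\ \nu=\lambda_1+\xi,\ \xi\in\Lambda,\ |\xi+\rho_0|\leq|\lambda_2+\rho_0|\}$. By Lemma \ref{Fouriersupport}, $P_{\lambda_1}f\cdot P_{\lambda_2}g=\sum_{\nu\in S}P_\nu\big(P_{\lambda_1}f\cdot P_{\lambda_2}g\big)$, so, since joint eigenfunctions of distinct spectral parameters are orthogonal,
\begin{align*}
\|P_{\lambda_1}f\cdot P_{\lambda_2}g\|_{L^2(M)}^2=\sum_{\nu\in S}\big\|P_\nu\big(P_{\lambda_1}f\cdot P_{\lambda_2}g\big)\big\|_{L^2(M)}^2,
\end{align*}
and $\#S\lesssim N^r$ since $\Lambda$ has rank $r$ and $\xi$ lies in a ball of radius $\asymp N$. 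Granting the \emph{per-frequency bound}
\begin{align*}
\big\|P_\nu\big(P_{\lambda_1}f\cdot P_{\lambda_2}g\big)\big\|_{L^2(M)}\lesssim_\varepsilon N^{\frac{d-3r}{2}+\varepsilon}\,\|f\|_{L^2(M)}\|g\|_{L^2(M)}\quad\text{uniformly in }\nu\in S,
\end{align*}
summing over the $\lesssim N^r$ values of $\nu$ yields exactly the exponent $\frac d2-r$ of \eqref{bilinearbilinear}; when $d>3r$ the counting below has room to spare, which removes the $\varepsilon$ and gives part (ii). Note that bounding each summand crudely by H\"older together with the eigenfunction $L^p$-bounds of Theorem \ref{eigenfunctionbound} (or Sogge's bound) loses uncontrolled powers of $|\lambda_1|$ and is far too lossy, so the transversality between the two frequencies must be exploited.

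\emph{Third, the crux.} To prove the per-frequency bound, realize $P_{\lambda_1}f\cdot P_{\lambda_2}g$ as a matrix coefficient of $\pi_{\lambda_1}\otimes\pi_{\lambda_2}$ and apply Clerc's formula \eqref{Clerc's} to $\varphi_{\lambda_1}$, $\varphi_{\lambda_2}$, and $\varphi_\nu$; the square of the $L^2$-norm then becomes an integral over two copies of $K$ of a weighted exponential sum over the weight lattice, running over pairs $(\xi_1,\xi_2)$ with $\xi_1+\xi_2$ pinned to $\nu$, with $|\xi_1|_\rho\asymp|\lambda_1|$ and $|\xi_2|_\rho\asymp|\lambda_2|$, and weighted by $d_{\xi_1}d_{\xi_2}$, which by Corollary \ref{dmu} are polynomials of degree $d-r$ each. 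The estimate thereby reduces to a weighted count of lattice points in the intersection of a fixed affine subspace with two thickened spectral shells, and its size is governed by how tangentially those shells can meet; the hypothesis $d\geq 3r$ is exactly what makes the resulting loss --- a divisor-type $N^\varepsilon$, absent when $d>3r$ --- absorbable. I expect this lattice-point/tangency estimate to be the main obstacle in general, which is why the statement remains a conjecture. In the two settings where it is established in this paper, where $M$ is a product of rank-one spaces (possibly with a rational toric factor), every rank-one joint eigenfunction is a Laplace--Beltrami eigenfunction, the shell count on each rank-one factor is the classical bilinear eigenfunction bound of Burq--G\'erard--Tzvetkov \cite{BGT05,BGT05m}, the contribution of a rational torus is a bilinear estimate on rational tori in the spirit of Bourgain \cite{Bou93}, and tensoring these as in the first stage yields \eqref{bilinearbilinear} in that case.
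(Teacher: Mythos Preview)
The statement is a \emph{conjecture} in the paper; the paper verifies it only when $M$ is a product of rank-one factors, via Proposition~\ref{jointspectral}. So there are two things to compare: your route to the special case, and your speculative Stages~2--3 for the general one.

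For the special case your approach genuinely differs from the paper's and is shorter. The paper reproves the Burq--G\'erard--Tzvetkov machinery on the product manifold: it writes $\chi_i(\sqrt{-\Delta_i}-\lambda_i)=\lambda_i^{(d_i-1)/2}T_{i,\lambda_i}+R_{i,\lambda_i}$ on each factor, expands $\chi_\lambda f\cdot\chi_\mu g$ into the seven cross terms \eqref{TTTT}--\eqref{RRRR}, and establishes product versions of Lemmas~2.9, 2.10, 2.14 of \cite{BGT05m}. Your route is instead to cite \cite[Theorem~3]{BGT05m} on each rank-one factor and tensor. The tensorization is correct, though your Hadamard/Gram-matrix phrasing obscures the mechanism; the clean argument is a two-step Fubini: fix $x_2$ and apply the $M_1$ bilinear bound to the slices $u(\cdot,x_2)\in V_1$, $v(\cdot,x_2)\in W_1$; then expand $\|u(\cdot,x_2)\|_{L^2(M_1)}^2\|v(\cdot,x_2)\|_{L^2(M_1)}^2$ as a double $M_1$-integral with independent dummies $x_1,x_1'$, swap the $M_2$-integral inside, and apply the $M_2$ bound to $u(x_1,\cdot)\in V_2$, $v(x_1',\cdot)\in W_2$. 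Since $\min(\lambda_1^{(j)},\lambda_2^{(j)})\leq\min(|\lambda_1|,|\lambda_2|)$ and each exponent $d_j/2-1\geq 0$, the product of factor bounds is dominated by $\min(|\lambda_1|,|\lambda_2|)^{d/2-r}$. What the paper's longer argument buys is modularity: it isolates exactly which oscillatory-integral lemmas one would need to upgrade in order to handle an irreducible factor of higher rank.

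Your Stages~2--3 for the general conjecture remain speculative, and there are two concrete problems. First, the per-frequency bound $\|P_\nu(P_{\lambda_1}f\cdot P_{\lambda_2}g)\|_{L^2}\lesssim N^{(d-3r)/2+\varepsilon}\|f\|\|g\|$ is strictly stronger than \eqref{bilinearbilinear} and is not known even in rank one; the BGT proof does not yield it. Second, your Stage-3 mechanism is not well-defined: Clerc's formula \eqref{Clerc's} represents the \emph{spherical} function $\varphi_\lambda$, not a general joint eigenfunction $P_\lambda f$, so one cannot simply ``apply Clerc's formula to $\varphi_{\lambda_1},\varphi_{\lambda_2},\varphi_\nu$'' and obtain the $L^2$-norm of $P_\nu(P_{\lambda_1}f\cdot P_{\lambda_2}g)$ as a lattice sum; the passage from matrix coefficients to the promised weighted exponential sum over $(\xi_1,\xi_2)$ with $\xi_1+\xi_2=\nu$ is not justified. (Your closing sentence on the special case also conflates two things: the BGT bilinear bound is not the ``shell count'' of Stage~3 --- you are really bypassing Stages~2--3 there and using BGT directly plus Stage-1 tensoring, which is fine.)
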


Taking $r=1$, the above statement becomes the bilinear Laplace-Beltrami eigenfunction bounds as established in \cite{BGT05m}. In general, the non-triviality of the above conjecture may already be seen by taking $f=g$ and $\lambda=\mu$ in the above estimate, which yields
\begin{align}\label{L4}
\|P_\lambda f\|_{L^4(M)}\lesssim_{\varepsilon} (|\lambda|+1)^{\frac{d-r}{2}-\frac{d}{4}+\varepsilon}\|f\|_{L^2(M)};
\end{align}
however, even such linear joint eigenfunction bounds are still open. In fact, Marshall in \cite{Mar16} established such linear joint eigenfunction bounds for a full range of Lebesgue exponent as follows
\begin{align}\label{conditionaljointbound}
\| P_\lambda f\|_{L^p(M)}\lesssim (|\lambda|+1)^{s(p)}\|f\|_{L^2(M)},
\end{align}
where 
$$
s(p)=\left\{\begin{array}{ll}
\frac{d-r}{2}-\frac{d}{p}, \ &\text{for }p>\frac{2(d+r)}{d-r},\\
\frac{d-r}{2}\left(\frac{1}{2}-\frac{1}{p}\right), \ &\text{for }2\leq p< \frac{2(d+r)}{d-r};
\end{array}\right.
$$
but they were established only under the regularity condition on the spectral parameter $\lambda$ that $\lambda$ should lie in a fixed cone centered at the origin and away from the walls of the Weyl chamber. To eliminate this regularity constaint seems a nontrivial problem, as indicated in \cite{Mar16}; we also refer to Li's work \cite{Li19} on symmetric spaces of noncompact type that has the potential of eliminating this regularity constraint for eigenfunction bounds on compact locally symmetric spaces. The exponents in the above conditional estimate \eqref{conditionaljointbound} as well as those in the bilinear Laplace-Beltrami eigenfunction bounds in \cite[Theorem 3]{BGT05m} are also the reasons of the conditions $d_0\geq 3r_0$ or $d_0> 3r_0$ assumed in the above conjecture.

We now verify this conjecture when $M$ is a product of rank-one compact symmetric spaces. In this case, the ring of invariant differential operators is simply generated by the Laplace-Beltrami operators on each component of $M$. In fact, we prove the following more general theorem that establishes bilinear estimates for the joint spectral projector. The proof is not difficult as an adaptation of the proof of \cite[Theorem 3]{BGT05m}, but it indicates the difficulties for a possible extension to multilinear estimates, and we choose to include it. 

\begin{prop}\label{jointspectral}
Let $M=M_1\times M_2\times\cdots\times M_r$ be a product of compact Riemann manifolds. Let $\Delta_i$ denote the Laplace-Beltrami operator on $M_i$, let $\chi_i$ be a bump function on $\R$, and let $\lambda_i\geq 1$, $i=1,\ldots,r$. Let $\lambda=(\lambda_1,\ldots,\lambda_r)$, and define the joint spectral projector around $\lambda$ as follows
$$\chi_\lambda=\prod_{i=1}^r\chi_{i}(\sqrt{-\Delta_i}-\lambda_i).$$ 
Set $|\lambda|=\sqrt{\lambda_1^2+\cdots+\lambda_r^2}$. \\
(i) Suppose the dimension of each $M_i$ is at least 3. 
Then for any $\lambda,\mu\in \mathbb{R}_{\geq 1}^r$, 
$$\|\chi_\lambda f\ \chi_\mu g\|_{L^2(M)}\lesssim_\varepsilon \min(|\lambda|,|\mu|)^{\frac{d}{2}-r+\varepsilon}\|f\|_{L^2(M)}\|g\|_{L^2(M)}.$$
(ii) Suppose the dimension of each $M_i$ is at least 4. 
Then for any $\lambda,\mu\in \mathbb{R}_{\geq 1}^r$, 
$$\|\chi_\lambda f\ \chi_\mu g\|_{L^2(M)}\lesssim \min(|\lambda|,|\mu|)^{\frac{d}{2}-r}\|f\|_{L^2(M)}\|g\|_{L^2(M)}.$$
\end{prop}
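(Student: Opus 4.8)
\emph{Proof strategy.} The plan is to prove, by induction on the number $r$ of factors, the sharper \emph{factorized} bound
\[
\|\chi_\lambda f\cdot\chi_\mu g\|_{L^2(M)}\lesssim\Big(\prod_{i=1}^r\min(\lambda_i,\mu_i)^{\frac{d_i-2}{2}}\Big)\|f\|_{L^2(M)}\|g\|_{L^2(M)},
\]
where $d_i=\dim M_i$ and each factor carries an additional $\varepsilon$-loss precisely when $d_i=3$ and none when $d_i\geq 4$. This implies the statement: since $\lambda_i,\mu_i\geq 1$ clearly gives $\min(\lambda_i,\mu_i)\leq\min(|\lambda|,|\mu|)$ and each exponent $(d_i-2)/2$ is positive, one has $\prod_i\min(\lambda_i,\mu_i)^{(d_i-2)/2}\leq\min(|\lambda|,|\mu|)^{\sum_i(d_i-2)/2}=\min(|\lambda|,|\mu|)^{d/2-r}$, and the accumulated $r\varepsilon$ is reabsorbed into $\varepsilon$; in case (ii) every $d_i\geq 4$, so there is no loss at all. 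The base case $r=1$ is exactly the bilinear estimate of \cite[Theorem 3]{BGT05m} on the single compact manifold $M_1$ (with its $\varepsilon$-loss when $d_1=3$), so nothing needs to be proved there.

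For the inductive step I would split $M=M_1\times M'$ with $M'=M_2\times\cdots\times M_r$, let $\Psi_{\lambda'}=\prod_{i=2}^r\chi_i(\sqrt{-\Delta_i}-\lambda_i)$ denote the joint projector on $M'$, and set $u=\chi_\lambda f$, $v=\chi_\mu g$, $h=\Psi_{\lambda'}f$, $k=\Psi_{\mu'}g$. Since $\chi_1(\sqrt{-\Delta_1}-\lambda_1)$ acts only in the $M_1$-variable, $u(\cdot,x')=\chi_1(\sqrt{-\Delta_1}-\lambda_1)[h(\cdot,x')]$ and similarly for $v$; applying the base case on $M_1$ for each fixed $x'\in M'$ to the raw slices $h(\cdot,x'),k(\cdot,x')\in L^2(M_1)$ and integrating in $x'$ yields
\[
\|uv\|_{L^2(M)}^2\lesssim\min(\lambda_1,\mu_1)^{d_1-2}\int_{M'}\|h(\cdot,x')\|_{L^2(M_1)}^2\,\|k(\cdot,x')\|_{L^2(M_1)}^2\,dx'.
\]
Now I would expand both inner norms and use Fubini's theorem to \emph{decouple} the two $M_1$-integrations: the remaining integral equals $\int_{M_1\times M_1}\|\Psi_{\lambda'}[f(x_1,\cdot)]\cdot\Psi_{\mu'}[g(\tilde x_1,\cdot)]\|_{L^2(M')}^2\,dx_1\,d\tilde x_1$, and applying the inductive hypothesis (the factorized bound on $M'$) to the raw functions $f(x_1,\cdot)$ and $g(\tilde x_1,\cdot)\in L^2(M')$ bounds this by $\big(\prod_{i=2}^r\min(\lambda_i,\mu_i)^{(d_i-2)/2}\big)^2\|f\|_{L^2(M)}^2\|g\|_{L^2(M)}^2$. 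Combining the two displays gives the factorized bound on $M$, completing the induction.

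The entire analytic content is carried by the single-manifold estimate \cite[Theorem 3]{BGT05m}; the rest is Fubini's theorem together with the tensor factorization $\chi_\lambda=\prod_i\chi_i(\sqrt{-\Delta_i}-\lambda_i)$, so there is no genuine difficulty here, only the bookkeeping of the $\varepsilon$-losses and the care to feed \emph{unlocalized} functions into the base case and the inductive hypothesis (which the tensor factorization makes automatic). The same scheme would reduce an $r$-fold product estimate to single-manifold $r$-linear eigenfunction bounds; the obstruction to carrying that out is that such $r$-linear bounds are not presently available with the uniformity required, which is why only the bilinear case is treated here.
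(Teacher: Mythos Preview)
Your argument is correct and is genuinely different from, and considerably shorter than, the paper's proof. You use the single-manifold bilinear bound of \cite[Theorem~3]{BGT05m} as a black box and bootstrap it to the product by the Fubini decoupling trick: apply the $M_1$-estimate pointwise in $x'$, then rewrite $\int_{M'}\|h(\cdot,x')\|_{L^2(M_1)}^2\|k(\cdot,x')\|_{L^2(M_1)}^2\,dx'$ as a double $M_1$-integral of $\|\Psi_{\lambda'}f(x_1,\cdot)\,\Psi_{\mu'}g(\tilde x_1,\cdot)\|_{L^2(M')}^2$ and invoke the inductive hypothesis. This yields the sharper factorized bound $\prod_i\min(\lambda_i,\mu_i)^{(d_i-2)/2}$, from which the stated estimate follows since each exponent is nonnegative and $\min(\lambda_i,\mu_i)\le\min(|\lambda|,|\mu|)$.

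By contrast, the paper opens up each factorwise projector via the oscillatory-integral parametrix $\chi_i(\sqrt{-\Delta_i}-\lambda_i)=\lambda_i^{(d_i-1)/2}T_{i,\lambda_i}+R_{i,\lambda_i}$ from \cite{BGT05m}, expands $\chi_\lambda f\cdot\chi_\mu g$ into all cross-terms (sixteen already for $r=2$), and then treats the main $T\cdots T$ term by product versions of the Carleson--Sj\"olin/nondegenerate-phase lemmas (their Lemmas analogous to \cite[Lemmas 2.9, 2.10, 2.14]{BGT05m}), while the mixed terms are handled with H\"older and Sobolev. Your route avoids all of this microlocal bookkeeping at the cost of importing the single-factor result wholesale; the paper's route, on the other hand, makes the oscillatory structure on the product explicit and records the product-phase lemmas, which may be of independent use. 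Both approaches inherit whatever $\varepsilon$-loss is present in the $d_i=3$ base case and none for $d_i\ge 4$, so the two give the same conclusion.
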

\begin{proof}
We treat (ii), and the proof of (i) will be entirely similar. We first treat the case $r=2$ and then indicate the changes for the general cases. 
By Lemma 2.3 of \cite{BGT05m}, for $f_i\in L^2(M_i)$ and 
$$\chi_{i}(\sqrt{-\Delta_i}-\lambda_i)f_i=\frac{1}{2\pi}\int_\varepsilon^{2\varepsilon}e^{-i\lambda_i\tau_i}\widehat{\chi_i}(\tau_i)(e^{i\tau_i\sqrt{-\Delta_i}}f_i)\ d\tau_i,$$
we have 
$$\chi_{i}(\sqrt{-\Delta_i}-\lambda_i) f_i=\lambda_i^{\frac{d_i-1}{2}}T_{i,\lambda_i}f_i+R_{i,\lambda_i}f_i,$$
with 
\begin{align}\label{Rif}
\|R_{i,\lambda_i}f_i\|_{H^k(M_i)}\lesssim_{N,k}\lambda_i^{k-N}\|f_i\|_{L^2(M_i)}, \ k=0,\ldots,N,
\end{align}
and the following expression in local coordinates of $T_{i,\lambda_i}f_i$ on each small neighborhood $V_i$ in $M_i$
$$T_{i,\lambda_i}f_i(x_i)=\int_{\mathbb{R}^{d_i}}e^{i\lambda_i\varphi_i(x_i,y_i)}a_i(x_i,y_i,\lambda_i)f_i(y_i)\ dy_i.$$
where $a_i(x_i,y_i,\lambda_i)$ is a polynomial in $\lambda^{-1}_i$  with smooth coefficients supported in the set 
$$\{(x_i,y_i)\in V_i\times V_i:\ |x_i|\leq\delta\lesssim \varepsilon/C\leq|y_i|\leq C\varepsilon \}$$
and $-\varphi_i(x_i,y_i)$ is the geodesic distance on $M_i$ between $x_i$ and $y_i$; $i=1,2$. Thus for 
$$\chi_\lambda=\chi_{1}(\sqrt{-\Delta_1}-\lambda_1)\chi_{2}(\sqrt{-\Delta_2}-\lambda_2)$$ and $f\in L^2(M_1\times M_2)$, we may write
$$\chi_\lambda f=\lambda_1^{\frac{d_1-1}{2}}\lambda_2^{\frac{d_2-1}{2}}T_{1,\lambda_1}T_{2,\lambda_2}f+\lambda_1^{\frac{d_1-1}{2}}T_{1,\lambda_1}R_{2,\lambda_2}f+\lambda_2^{\frac{d_2-1}{2}}R_{1,\lambda_1}T_{2,\lambda_2}f+R_{1,\lambda_1}R_{2,\lambda_2}f,$$
as well as for $\chi_\mu g$, and then 16 products of functions from the right side of the above equation would come out in the product $\chi_\lambda f\ \chi_\mu g$, and it would suffice to prove the desired bound for each of these products. However, due to similarity of these products and commutativity between $T_{1,\lambda_1},R_{1,\lambda_1}$ and $T_{2,\lambda_2},R_{2,\lambda_2}$, it suffices to prove the following 7 bounds
\begin{align}\label{TTTT}
\|T_{1,\lambda_1}T_{2,\lambda_2}f\ T_{1,\mu_1}T_{2,\mu_2}g\|_{L^2(M)}\lesssim \min(|\lambda|,|\mu|)^{\frac{d}{2}-r}(\lambda_1\mu_1)^{-\frac{d_1-1}{2}}(\lambda_2\mu_2)^{-\frac{d_2-1}{2}}\|f\|_{L^2(M)}\|g\|_{L^2(M)},
\end{align}
\begin{align}\label{TTTR}
\|T_{1,\lambda_1}T_{2,\lambda_2}f\ T_{1,\mu_1}R_{2,\mu_2}g\|_{L^2(M)}\lesssim  \min(|\lambda|,|\mu|)^{\frac{d}{2}-r}(\lambda_1\mu_1)^{-\frac{d_1-1}{2}}\lambda_2^{-\frac{d_2-1}{2}}\|f\|_{L^2(M)}\|g\|_{L^2(M)},
\end{align}
\begin{align}\label{TRTR}
\|T_{1,\lambda_1}R_{2,\lambda_2}f\ T_{1,\mu_1}R_{2,\mu_2}g\|_{L^2(M)}\lesssim \min(|\lambda|,|\mu|)^{\frac{d}{2}-r}(\lambda_1\mu_1)^{-\frac{d_1-1}{2}}\|f\|_{L^2(M)}\|g\|_{L^2(M)},
\end{align}
\begin{align}\label{TTRR}
\|T_{1,\lambda_1}T_{2,\lambda_2}f\ R_{1,\mu_1}R_{2,\mu_2}g\|_{L^2(M)}\lesssim \min(|\lambda|,|\mu|)^{\frac{d}{2}-r}\lambda_1^{-\frac{d_1-1}{2}}\lambda_2^{-\frac{d_2-1}{2}}\|f\|_{L^2(M)}\|g\|_{L^2(M)},
\end{align}
\begin{align}\label{TRRT}
\|T_{1,\lambda_1}R_{2,\lambda_2}f\ R_{1,\mu_1}T_{2,\mu_2}g\|_{L^2(M)}\lesssim \min(|\lambda|,|\mu|)^{\frac{d}{2}-r}\lambda_1^{-\frac{d_1-1}{2}}\mu_2^{-\frac{d_2-1}{2}}\|f\|_{L^2(M)}\|g\|_{L^2(M)},
\end{align}
\begin{align}\label{TRRR}
\|T_{1,\lambda_1}R_{2,\lambda_2}f\ R_{1,\mu_1}R_{2,\mu_2}g\|_{L^2(M)}\lesssim \min(|\lambda|,|\mu|)^{\frac{d}{2}-r}\lambda_1^{-\frac{d_1-1}{2}}\|f\|_{L^2(M)}\|g\|_{L^2(M)},
\end{align}
\begin{align}\label{RRRR}
\|R_{1,\lambda_1}R_{2,\lambda_2}f\ R_{1,\mu_1}R_{2,\mu_2}g\|_{L^2(M)}\lesssim  \min(|\lambda|,|\mu|)^{\frac{d}{2}-r}\|f\|_{L^2(M)}\|g\|_{L^2(M)}.
\end{align}
First we have from \eqref{Rif}, the $L^2$ boundedness of 
$\chi_i(\sqrt{-\Delta_i}-\lambda_i)$ ($i=1,2$), and Fubini's theorem  the following basic inequalities
$$\|R_{i,\lambda_i} f\|_{H^{k}(M_i, L^2(M_{3-i}))}=\|R_{i,\lambda_i} f\|_{L^{2}(M_{3-i}, H^k(M_{i}))}\lesssim_{N,k} \lambda_i^{k-N}\|f\|_{L^2(M)},\ k=0,\ldots,N,\ i=1,2,$$
$$\|R_{1,\lambda_1}R_{2,\lambda_2}f\|_{H^{k}(M)}
\lesssim \|R_{1,\lambda_1}R_{2,\lambda_2}f\|_{H^k(M_2, H^k(M_1))}\lesssim_{N,k} (\lambda_1\lambda_2)^{k-N}\|f\|_{L^2(M)},\ k=0,\ldots,N,$$
$$\|T_{i,\lambda_i}f\|_{L^2(M)}\lesssim \lambda_i^{-\frac{d_i-1}{2}}\|f\|_{L^2(M)},\ i=1,2,$$
$$\|T_{1,\lambda_1}T_{2,\lambda_2}f\|_{L^2(M)}\lesssim \lambda_1^{-\frac{d_1-1}{2}}\lambda_2^{-\frac{d_2-1}{2}}\|f\|_{L^2(M)},$$
$$\|T_{1,\lambda_1}R_{2,\lambda_2}f\|_{H^k(M_2, L^2(M_1))}\lesssim_{N,k} \lambda_1^{-\frac{d_1-1}{2}}\lambda_2^{-N}\|f\|_{L^2(M)},\ k=0,\ldots,N,$$
$$\|R_{1,\lambda_1}T_{2,\lambda_2}f\|_{L^2(M_2, H^k(M_1))}\lesssim_{N,k} \lambda_2^{-\frac{d_2-1}{2}}\lambda_1^{-N}\|f\|_{L^2(M)},\ k=0,\ldots,N.$$
\underline{Case I: \eqref{TTRR},\eqref{TRRT},\eqref{TRRR},\eqref{RRRR}.} These inequalities share the feature that no $T_{i,\lambda_i}$ and $T_{i,\mu_i}$ appear together for a same $i$, and they can be established by applying H\"older's inequality. For example, \eqref{TRRT} holds by writing 
$$\|T_{1,\lambda_1}R_{2,\lambda_2}f\ R_{1,\mu_1}T_{2,\mu_2}g\|_{L^2(M)}\leq\|T_{1,\lambda_1}R_{2,\lambda_2}f\|_{L^\infty(M_2, L^2(M_1))}\|R_{1,\mu_1}T_{2,\mu_2}g\|_{L^2(M_2,L^\infty(M_1))}$$
and applying Sobolev embedding. \\
\underline{Case II: \eqref{TTTR},\eqref{TRTR}.} These share the feature that some but not all pairs of $T_{i,\lambda_i}$ and $T_{i,\mu_i}$ appear together for a same $i$. We use (2.14) of \cite{BGT05m} as follows 
$$\|T_{1,\lambda_1}f_1\ T_{1,\mu_1}g_1\|_{L^2(M_1)}\lesssim \min(\lambda_1,\mu_1)^{\frac{d_1}{2}-1}(\lambda_1\mu_1)^{-\frac{d_1-1}{2}}\|f_1\|_{L^2(M_1)}\|g_1\|_{L^2(M_1)}.$$
For example, applying the above inequality, \eqref{TTTR} follows by writing 
$$\|T_{1,\lambda_1}T_{2,\lambda_2}f\ T_{1,\mu_1}R_{2,\mu_2}g\|_{L^2(M)}=\|T_{1,\lambda_1}T_{2,\lambda_2}f\ T_{1,\mu_1}R_{2,\mu_2}g\|_{L^2(M_2, L^2(M_1))}$$
and then 
$$\left\|\|T_{2,\lambda_2}f\|_{L^2(M_1)} \|R_{2,\mu_2}g\|_{L^2(M_1)}\right\|_{L^2(M_2)}
\leq \|T_{2,\lambda_2}f\|_{L^2(M)}\|R_{2,\mu_2}g\|_{L^\infty(M_2,L^2(M_1))},$$
combined with Sobolev embedding. \\
\underline{Case III: \eqref{TTTT}.} For this, we further recall \cite{BGT05m} to write $y_i\in V_i$ in geodesic polar coordinates as 
$y_i=\exp_i(r_i\omega_i)$, $r_i>0$, $\omega_i\in\mathbb{S}^{d_i-1}$, where $\exp_i$ stands for the exponential mapping centered at the origin of $V_i$, 
and define $\varphi_{i,r_i}(x_i,\omega_i)=\varphi_i(x_i,\exp_i(r_i\omega_i))$. Write $dy_i=\kappa_i(r_i,\omega_i)\ dr_i\ d\omega_i$, and we define the operator acting on functions $f_i$ on $\mathbb{S}^{d_i-1}$
$$(T^{r_i}_{i,\lambda_i}f_i)(x_i)=\int_{\mathbb{S}^{d_i-1}}
e^{i\lambda_i\varphi_{r_i}(x_i,\omega_i)}a_{i,r_i}(x_i,\omega_i,\lambda_i)f_i(\omega_i)\ d\omega_i,$$
where $a_{i,r_i}(x_i,\omega_i,\lambda_i)=\kappa_i(r_i,\omega_i)a_i(x_i,\exp_i(r_i\omega_i),\lambda_i)$.
For $f\in L^2(M_1\times M_2)$, set 
$$f_{r_1,r_2}(\omega_1,\omega_2)=f(\exp_1(r_1\omega_1), \exp_2(r_2\omega_2)),$$
we then have 
$$T_{1,\lambda_1}T_{2,\lambda_2}f=\int_0^\infty\!\!\!\int_0^\infty T^{r_1}_{1,\lambda_1}T^{r_2}_{2,\lambda_2}f_{r_1,r_2}\ dr_1\ dr_2.$$
Similarly, with $g_{q_1,q_2}(\omega_1,\omega_2)=g(\exp_1(q_1\omega_1), \exp_2(q_2\omega_2))$, we have 
$$T_{1,\lambda_1}T_{2,\lambda_2}f\ T_{1,\mu_1}T_{2,\mu_2}g=\int_{\varepsilon/C}^{C\varepsilon}\!\int_{\varepsilon/C}^{C\varepsilon}\!\int_{\varepsilon/C}^{C\varepsilon}\!\int_{\varepsilon/C}^{C\varepsilon} T^{r_1}_{1,\lambda_1}T^{r_2}_{2,\lambda_2}f_{r_1,r_2}\ T^{q_1}_{1,\mu_1}T^{q_2}_{2,\mu_2}g_{q_1,q_2}\ dr_1\ dr_2\ dq_1\ dq_2.$$
By Minkowski inequality, to prove \eqref{TTTT}, it suffices to show 
$$\|T^{r_1}_{1,\lambda_1}T^{r_2}_{2,\lambda_2}f\ T^{q_1}_{1,\mu_1}T^{q_2}_{2,\mu_2}g\|_{L^2(M)}\lesssim \min(|\lambda|,|\mu|)^{\frac{d}{2}-r}(\lambda_1\mu_1)^{-\frac{d_1-1}{2}}(\lambda_2\mu_2)^{-\frac{d_2-1}{2}}\|f\|_{L^2(\mathbb{S}^{d_1-1}\times \mathbb{S}^{d_2-1})}\|g\|_{L^2(\mathbb{S}^{d_1-1}\times \mathbb{S}^{d_2-1})}.$$
The following lemma is from \cite[Lemma 2.9]{BGT05m}. 
\begin{lem}\label{lemma2.9}
Let $i=1,2$. Let $\underline{\omega}_i^{(1)},\underline{\omega}_i^{(2)}$ be two points on $\mathbb{S}^{d_i-1}$. Then there exists a splitting of the variable $x_i=(t_i,z_i)\in\R\times\R^{d_i-1}$ and neighborhoods $U_{i,1},U_{i,2}$ in $\mathbb{S}^{d_i-1}$ of $\underline{\omega}_i^{(1)},\underline{\omega}_i^{(2)}$ respectively such that $\varphi_{i,r_i}(t_i,z_i,w_i)$ satisfies 
$$\left|\det\left(\frac{\partial^2\varphi_{i,r_i}(t_i,z_i,\omega_i)}{\partial z_{i}\partial \omega_{i}}\right)\right|\geq c>0,$$
for $\omega_i\in U_{i,1}\bigcup U_{i,2}$. 
\end{lem}
Now we rework some of the lemmas in \cite{BGT05m} under the current product manifold setting. 
The following is a product version of 
\cite[Lemma 2.10]{BGT05m}. 
\begin{lem}\label{lemma2.10}
Let $(t_i,z_i)\in\R\times\R^{d_i-1}$ be any local system of coordinate near $(0,0)$, $i=1,2$. Then the operator 
$$g\in L^2(\mathbb{S}^{d_1-1}\times \mathbb{S}^{d_2-1})\mapsto (T^{r_1}_{1,\lambda_1}T^{r_2}_{2,\lambda_2}g)(t_1,z_1,t_2,z_2)\in L^2(\mathbb{R}_{t_1}\times\mathbb{R}_{t_2}, L^\infty(\mathbb{R}^{d_1-1}_{z_1}\times \mathbb{R}^{d_2-1}_{z_2}))$$
is continuous with norm bounded by $\lesssim (\lambda_1\lambda_2)^{-\frac{1}{2}}$. 
\end{lem}
To prove the above lemma, it suffices to observe that the kernel of the $TT^*$ operator 
$T^{r_1}_{1,\lambda_1}T^{r_2}_{2,\lambda_2}(T^{r_1}_{1,\lambda_1}T^{r_2}_{2,\lambda_2})^*
$
which equals $T^{r_1}_{1,\lambda_1}(T^{r_1}_{1,\lambda_1})^*\cdot T^{r_2}_{2,\lambda_2}(T^{r_2}_{2,\lambda_2})^*$ due to commutativity, is the product of the kernels of $T^{r_1}_{1,\lambda_1}(T^{r_1}_{1,\lambda_1})^*$ and  $T^{r_1}_{2,\lambda_2}(T^{r_1}_{2,\lambda_2})^*$. The rest of the proof follows as that of \cite[Lemma 2.10]{BGT05m}. 
The next lemma is again a product version of \cite[Lemma 2.14]{BGT05m}. 
\begin{lem}\label{lemma2.14}
Under the assumptions of Lemma \ref{lemma2.9}, the operator 
$$g\in L^2(\mathbb{S}^{d_1-1}\times \mathbb{S}^{d_2-1})\mapsto (T^{r_1}_{1,\lambda_1}T^{r_2}_{2,\lambda_2}g)(t_1,z_1,t_2,z_2)\in L^\infty(\mathbb{R}_{t_1}\times\mathbb{R}_{t_2}, L^2(\mathbb{R}^{d_1-1}_{z_1}\times \mathbb{R}^{d_2-1}_{z_2}))$$
is continuous with norm bounded by $\lesssim \lambda_1^{-\frac{d_1-1}{2}}\lambda_2^{-\frac{d_2-1}{2}}$.
\end{lem}
This follows from the product version of the nondegenerate phase estimate as follows. 
\begin{lem}
Let us consider for $i=1,2$, $\varphi_i\in C^\infty(\mathbb{R}^{n_i}_{z_i}\times\mathbb{R}^{n_i}_{\omega_i})$ and $a_i\in C_0^\infty(\mathbb{R}^{n_i}_{z_i}\times\mathbb{R}^{n_i}_{\omega_i})$ such that 
$$(z_i,\omega_i)\in\t{supp}(a_i)\ \Rightarrow\ \det\left(\frac{\partial^2\varphi_i}{\partial z_i\partial \omega_i}(z_i,\omega_i)\right)\neq 0.$$
Define the operator $S_{i,\lambda_i}$
$$S_{i,\lambda_i}f_i(z_i)=\int_{\R^{n_i}}e^{i\lambda_i\varphi_i(z_i,\omega_i)}a_i(z_i,\omega_i)f_i(\omega_i)\ d\omega_i.$$
Then 
$$\|S_{1,\lambda_1}S_{2,\lambda_2}f\|_{L^2(\mathbb{R}_{z_1}^{n_1}\times \mathbb{R}_{z_2}^{n_2})}\lesssim \lambda_1^{-\frac{n_1}{2}}\lambda_2^{-\frac{n_2}{2}}\|f\|_{L^2(\mathbb{R}_{\omega_1}^{n_1}\times \mathbb{R}_{\omega_2}^{n_2})}.$$
\end{lem}
With the proof of the original nondegenerate phase estimate as in Chapter IX Section 1.1 of \cite{Ste93} in mind, the proof of the above product version is again an observation that the kernel of the $TT^*$ operator 
$S_{1,\lambda_1}S_{2,\lambda_2}(S_{1,\lambda_1}S_{2,\lambda_2})^*$ is the product of the kernels of $S_{1,\lambda_1}S_{1,\lambda_1}^*$ and $S_{2,\lambda_2}S_{2,\lambda_2}^*$.
We may now finish the proof. Assume that 
$$|\lambda|=\sqrt{\lambda_1^2+\lambda_2^2}\leq \sqrt{\mu_1^2+\mu_2^2}=|\mu|.$$
We write 
$$T^{r_1}_{1,\lambda_1}T^{r_2}_{2,\lambda_2}f
=\int_{S^{d_1-1}}\!\int_{S^{d_2-1}}e^{i\lambda_1\varphi_{1,r_1}(x_1,\omega_1)+i\lambda_2\varphi_{2,r_2}(x_2,\omega_2)}
 a_{1,r_1}(x_1,\omega_1,\lambda_1)a_{2,r_2}(x_2,\omega_2,\lambda_2)f(\omega_1,\omega_2)\ d\omega_1\ d\omega_2,$$
$$T^{q_1}_{1,\mu_1}T^{q_2}_{2,\mu_2}g
=\int_{S^{d_1-1}}\!\int_{S^{d_2-1}}e^{i\mu_1\varphi_{1,q_1}(x_1,\omega'_1)+i\mu_2\varphi_{2,q_2}(x_2,\omega'_2)}
 a_{1,q_1}(x_1,\omega'_1,\mu_1)a_{2,q_2}(x_2,\omega'_2,\mu_2)g(\omega'_1,\omega'_2)\ d\omega'_1\ d\omega'_2.$$
After a partition of unity, we can suppose that on the support of 
$a_{i,r_i}(x_i,\omega_i,\lambda_i)a_{i,q_i}(x_i,\omega'_i,\mu_i)$, $(\omega_i,\omega'_i)$ is close to a fixed point $(\underline{\omega}_i^{(1)}, \underline{\omega}_i^{(2)})\in \mathbb{S}^{d_i-1}\times \mathbb{S}^{d_i-1}$, $i=1,2$. We can therefore use the splitting $x_i=(t_i,z_i)$ of Lemma \ref{lemma2.9}, and estimate using H\"older's inequality, Lemma \ref{lemma2.10} and \ref{lemma2.14} that 
\begin{align*}
\|T^{r_1}_{1,\lambda_1}T^{r_2}_{2,\lambda_2}f\ T^{q_1}_{1,\mu_1}T^{q_2}_{2,\mu_2}g\|_{L^2_{t_1,t_2}L^2_{z_1,z_2}}
&\leq\|T^{r_1}_{1,\lambda_1}T^{r_2}_{2,\lambda_2}f\|_{L^2_{t_1,t_2}L^\infty_{z_1,z_2}}\|T^{q_1}_{1,\mu_1}T^{q_2}_{2,\mu_2}g\|_{L^\infty_{t_1,t_2}L^2_{z_1,z_2}}\\
&\lesssim (\lambda_1\lambda_2)^{-\frac{1}{2}}\mu_1^{-\frac{d_1-1}{2}}\mu_2^{-\frac{d_2-1}{2}}\|f\|_{L^2}\|g\|_{L^2}\\
&\lesssim |\lambda|^{\frac{d}{2}-2}(\lambda_1\mu_1)^{-\frac{d_1-1}{2}}(\lambda_2\mu_2)^{-\frac{d_2-1}{2}}\|f\|_{L^2}\|g\|_{L^2}.
\end{align*}
This finishes the proof of $r=2$ case. For higher $r$, we proceed similarly and would need derive bounds as in \eqref{TTTT} through \eqref{RRRR}. All of these bounds may be derived in a similar way as above, and it suffices to mention that in deriving bounds as in the Case II above, we need to use inductively the bounds as in Case III for a lower $r$. 
\end{proof}

We now prove some bilinear Strichartz estimates assuming Conjecture \ref{bilineareigen} to be true. We need the following counting estimates of number of representations of an integer by a positive definite integral quadratic form. 

\begin{lem}\label{counting}
Let $Q(\cdot)$ be a positive definite quadratic form of integral coefficients of $r$ variables. Then for $n\in\mathbb{Z}_{\geq 1}$, we have 
$$
 \#\{{\bf x}\in \mathbb{Z}^r:\ n=Q({\bf x})\}\left\{
\begin{array}{ll}
\lesssim_{\varepsilon>0} n^{r/2-1+\varepsilon}, & r=2,3,4;\\
\lesssim n^{r/2-1}, & r\geq 5.
\end{array}
\right.
$$
\end{lem}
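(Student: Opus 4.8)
The plan is to split the proof into three regimes, $r=2$, $3\le r\le 4$, and $r\ge 5$, with the binary case serving as the base of a ``peeling'' reduction that will handle $3\le r\le 4$. Throughout write $r_Q(n)=\#\{x\in\mathbb{Z}^r:\ Q(x)=n\}$. For $r=2$ we have $r/2-1=0$, so the assertion is the classical fact that a positive definite integral binary form $A(x_1,x_2)=ax_1^2+bx_1x_2+cx_2^2$ represents $n$ at most $\lesssim_\varepsilon n^\varepsilon$ times. I would prove this by multiplying $A=n$ by $4a$, turning the equation into $(2ax_1+bx_2)^2+|D|x_2^2=4an$ with $D=b^2-4ac<0$ a fixed negative integer; a solution then corresponds to an element of norm $4an$ in the fixed imaginary quadratic order $\mathbb{Z}[\sqrt{-|D|}\,]$, and the number of such elements is at most the (finite, fixed) number of units times the number of integral ideals of norm $4an$ in that order, which is $\lesssim_\varepsilon n^\varepsilon$ by the divisor bound. (Alternatively one can invoke Gauss's theory of primitive representations by binary quadratic forms directly.)

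For $3\le r\le 4$ I would peel off two variables. Split $x=(x',x'')$ with $x'=(x_1,x_2)$, $x''=(x_3,\dots,x_r)$, and decompose $Q(x)=A(x')+B(x',x'')+C(x'')$, where $A(x')=Q(x',0)$ is a positive definite integral binary form and $B$ collects the cross terms, with integer coefficients. Positive definiteness forces any solution of $Q(x)=n$ to have $x''$ in an ellipsoid of radius $\asymp\sqrt n$, so there are $\lesssim n^{(r-2)/2}$ admissible values of $x''$. For each fixed $x''$ the equation becomes $A(x')+\ell(x')=m$ with $\ell$ an integer linear form and $|m|\lesssim n$; completing the square and clearing a fixed denominator $D_0$ (depending only on $A$) reduces the count of integer solutions $x'$ to a binary representation count $\#\{(u,v)\in\mathbb{Z}^2:\ A(u,v)=m'\}$ with $|m'|\lesssim n$, hence $\lesssim_\varepsilon n^\varepsilon$ by the base case. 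Summing over $x''$ yields $r_Q(n)\lesssim_\varepsilon n^{(r-2)/2+\varepsilon}=n^{r/2-1+\varepsilon}$. This same argument in fact produces the $\varepsilon$-lossy bound for every $r\ge 2$; what remains is to remove the loss when $r\ge 5$.

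For $r\ge 5$ I would invoke the classical analytic theory of the theta series $\theta_Q(z)=\sum_{x\in\mathbb{Z}^r}e^{2\pi i Q(x)z}$, a modular form of weight $r/2$ on some congruence group $\Gamma_0(N)$ with nebentypus (of half-integral weight when $r$ is odd). Decompose $\theta_Q=E_Q+S_Q$ into its Eisenstein and cuspidal parts. The $n$-th Fourier coefficient of $E_Q$ equals, up to a bounded factor, $\mathfrak{S}(n)\,n^{r/2-1}$, where $\mathfrak{S}(n)$ is the singular series; for $r\ge 5$ it is standard that $\mathfrak{S}(n)$ is bounded uniformly in $n$ (the good-prime local densities are $1+O(p^{-2})$ and the finitely many bad primes contribute $O(1)$), so this coefficient is $\lesssim n^{r/2-1}$. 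The $n$-th coefficient of $S_Q$ is $\lesssim n^{r/4}$ by Hecke's trivial bound (valid also for half-integral weight), and $r/4<r/2-1$ for $r\ge 5$. Adding the two contributions gives $r_Q(n)\lesssim n^{r/2-1}$ with no loss. One could equally appeal to the Hardy--Littlewood--Kloosterman circle method, which for $r\ge 5$ yields $r_Q(n)=\mathfrak{S}(n)\mathfrak{I}(n)+O(n^{r/2-1-\delta})$ with $\mathfrak{S}(n)\lesssim 1$ and $\mathfrak{I}(n)\asymp n^{r/2-1}$. The routine parts are the peeling reduction and the divisor bound; the one point demanding genuine care is the uniform boundedness of $\mathfrak{S}(n)$ for $r\ge 5$, which is exactly what upgrades $n^{r/2-1+\varepsilon}$ to $n^{r/2-1}$ and distinguishes this regime from $r=3,4$, where $\mathfrak{S}(n)$ may grow like $n^\varepsilon$. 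I expect that to be the step most worth citing or verifying precisely, though it is entirely classical.
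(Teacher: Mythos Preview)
Your proposal is correct. For $r=2,3,4$ you follow the same route as the paper: reduce to the binary case by peeling off all but two variables, then count representations by a binary form via the imaginary quadratic order and the divisor bound (the paper cites \cite{BP16} for exactly this reduction).

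For $r\ge 5$ your argument diverges from the paper's. You invoke the modular decomposition of the theta series $\theta_Q=E_Q+S_Q$, bounding the Eisenstein coefficient by $\mathfrak{S}(n)n^{r/2-1}$ with $\mathfrak{S}(n)\lesssim 1$ and the cuspidal coefficient by Hecke's bound $O(n^{r/4})$. The paper instead gives a direct and elementary circle-method argument that reuses the machinery already developed for the Schr\"odinger kernel: it writes $r_Q(n)$ as $\int_0^{2\pi} s_n(t)\,dt$ for a smoothed theta-sum $s_n(t)=\sum_x\phi(Q(x)/n)e^{it(Q(x)-n)}$, bounds $s_n(t)$ on Farey arcs by Weyl differencing (obtaining the same $N^r/\bigl(\sqrt{q}(1+N\|t/2\pi-a/q\|^{1/2})\bigr)^r$ shape as \eqref{KNtH}), and then sums over the Farey dissection of Section~\ref{farey}. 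The paper's approach is self-contained within techniques already present and avoids any appeal to Eisenstein series, half-integral-weight forms, or the boundedness of the singular series; your approach is more structural and in fact yields an asymptotic rather than merely an upper bound, but at the cost of importing the classical theory and, as you correctly flag, needing a precise reference for $\mathfrak{S}(n)\lesssim 1$ when $r\ge 5$.
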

\begin{proof}
For the special case when $Q$ is a sum of squares, the result is well-known; see \cite{Gro85}. For the general case, the first bound with the $\varepsilon$-loss may be proved by first reducing it to the case $r=2$, which is then reduced to counting algebraic integers with a fixed norm in an imaginary quadratic field, for which the standard divisor bound may be applied; see for example \cite[Lemma 8]{BP16}. For the second bound without the $\varepsilon$-loss, 
we may apply the circle method and write
$$
S_n:=\#\{{\bf x}\in \mathbb{Z}^r:\ n=Q({\bf x})\}
=\frac{1}{2\pi}\int_0^{2\pi}\underbrace{\sum_{{\bf x}\in\mathbb{Z}^r}\phi\left(Q({\bf x})/n\right)e^{it(Q({\bf x})-n)}}_{=:s_n(t)}\ dt
$$
using a bump function $\phi$ with $\phi(1)=1$. Let $N=\lfloor \sqrt{n}\rfloor$. 
The inner exponential sum $s_n(t)$ may be estimated by Weyl differencing to yield the bound on Farey arcs $\left\|\frac{t}{2\pi}-\frac{a}{q}\right\|\lesssim \frac{1}{qN}$
$$|s_n(t)|\lesssim\left(\frac{N}{\sqrt{q}(1+N\|\frac{t}{2\pi}-\frac{a}{q}\|^{1/2})}\right)^r.$$
Then we write using Farey dissection as in Section \ref{farey}
$$S_n=\sum_{Q,L}S_{Q,L}$$
where 
$$S_{Q,L}=\frac{1}{2\pi}\int_{\mathcal{M}_{Q,L}}s_n(t)\ dt.$$
Then the above bound tells 
$$|S_{Q,L}|\lesssim N^{\frac{r}{2}-1}L^{\frac{r}{2}-1}Q^{-\frac{r}{2}+2}.$$
Assuming $r\geq 5$, the above exponent of $Q$ is negative, thus summing over the dyadic integers $Q$ and $L$ yields the desired bound
$$S_n\leq\sum_{Q,L}|S_{Q,L}|\leq N^{r-2}.$$
\end{proof}

\begin{prop}\label{subcriticalbilinear}
Suppose $M$ is a symmetric space of compact type of dimension $d$ and rank $r\geq 2$. Suppose each irreducible component $M_0$ of $M$ has its dimension $d_0$ and rank $r_0$ such that $d_0\geq 3r_0$. 
Then Conjecture \ref{bilineareigen} (i) implies the following bilinear Strichartz estimate. For $N_1,N_2\geq 1$, we have 
\begin{align}\label{subcriticalbilinearStrichartz}
\|P_{N_1} e^{it\Delta} f_1 \cdot P_{N_2}e^{it\Delta}f_2\|_{L^2(\mathbb{T}\times M)}\lesssim_{\varepsilon} \min(N_1,N_2)^{\frac{d}{2}-1+\varepsilon}\|f_1\|_{L^2(M)}\|f_2\|_{L^2(M)}.
\end{align}
\end{prop}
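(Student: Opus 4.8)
The plan is to reduce the bilinear space-time estimate \eqref{subcriticalbilinearStrichartz} to the bilinear joint-eigenfunction bound \eqref{bilinearbilinear} of Conjecture \ref{bilineareigen}(i) by exploiting the periodicity in $t$ and expanding in the Fourier series of $M$. Write $P_{N_i}e^{it\Delta}f_i=\sum_{|\lambda_i|_\rho\asymp N_i}e^{-it|\lambda_i|_\rho^2}P_{\lambda_i}f_i$; then, as in the derivation of \eqref{spacetimeestimate}, the kernel is periodic in $t$ with period $\mathcal{T}$, so we may replace $I$ by $\mathbb{T}=\mathbb{R}/\mathcal{T}\mathbb{Z}$. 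Expanding the square of the left-hand side,
\begin{align*}
\|P_{N_1}e^{it\Delta}f_1\cdot P_{N_2}e^{it\Delta}f_2\|_{L^2(\mathbb{T}\times M)}^2
=\sum_{\substack{\lambda_1,\lambda_1'\,:\,|\lambda_i|_\rho\asymp N_1\\ \lambda_2,\lambda_2'\,:\,|\lambda_i|_\rho\asymp N_2}}
c(\lambda_1,\lambda_1',\lambda_2,\lambda_2')\,\langle P_{\lambda_1}f_1\,P_{\lambda_2}f_2,\ P_{\lambda_1'}f_1\,P_{\lambda_2'}f_2\rangle_{L^2(M)},
\end{align*}
where the $t$-integration forces the resonance relation $|\lambda_1|_\rho^2+|\lambda_2|_\rho^2=|\lambda_1'|_\rho^2+|\lambda_2'|_\rho^2$ (recall these lie in $\frac{2\pi}{\mathcal{T}}\mathbb{Z}$ by \eqref{D}), and each surviving coefficient has modulus $\lesssim 1$.

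Next I would bound the inner product. By Cauchy--Schwarz, $|\langle P_{\lambda_1}f_1\,P_{\lambda_2}f_2,\ P_{\lambda_1'}f_1\,P_{\lambda_2'}f_2\rangle|\le \|P_{\lambda_1}f_1\,P_{\lambda_2}f_2\|_{L^2(M)}\|P_{\lambda_1'}f_1\,P_{\lambda_2'}f_2\|_{L^2(M)}$, and then Conjecture \ref{bilineareigen}(i) gives each factor $\lesssim_\varepsilon \min(N_1,N_2)^{\frac{d}{2}-r+\varepsilon}\|P_{\lambda_1}f_1\|_{L^2}\|P_{\lambda_2}f_2\|_{L^2}$ and similarly for the primed version. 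This reduces matters to estimating
\begin{align*}
\min(N_1,N_2)^{d-2r+\varepsilon}\sum_{\lambda_1,\lambda_1',\lambda_2,\lambda_2'}\mathbbm{1}[\text{resonance}]\ \|P_{\lambda_1}f_1\|_{L^2}\|P_{\lambda_1'}f_1\|_{L^2}\|P_{\lambda_2}f_2\|_{L^2}\|P_{\lambda_2'}f_2\|_{L^2}.
\end{align*}
The point is now a counting argument: WLOG $N_1\le N_2$; for fixed $\lambda_1,\lambda_1',\lambda_2$, the number of admissible $\lambda_2'$ is at most the number of $\lambda_2'\in\Lambda^+$ with $|\lambda_2'|_\rho^2$ equal to a prescribed value in $\frac{2\pi}{\mathcal{T}}\mathbb{Z}$. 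Since $|\lambda_2'+\rho|^2$ is a positive definite integral quadratic form in the coordinates of $\lambda_2'$ (after clearing the rational denominators coming from Lemma \ref{rationality}), Lemma \ref{counting} bounds this count by $\lesssim_\varepsilon N_2^{r/2-1+\varepsilon}$ when $r\le 4$ and $\lesssim N_2^{r/2-1}$ when $r\ge 5$; in either case it is $\lesssim_\varepsilon \max(N_1,N_2)^{r/2-1+\varepsilon}$. A symmetric count in $\lambda_1'$ (with $|\lambda_1'|_\rho\asymp N_1$) gives a factor $\lesssim_\varepsilon N_1^{r/2-1+\varepsilon}$, or alternatively one simply uses that $\lambda_1'$ ranges over a band of volume $\lesssim N_1^{r}$; balancing via Cauchy--Schwarz/Schur test on the remaining sums against $\sum\|P_{\lambda_1}f_1\|^2=\|f_1\|_{L^2}^2$ and $\sum\|P_{\lambda_2}f_2\|^2=\|f_2\|_{L^2}^2$ will produce the final power of $\min(N_1,N_2)$.

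More precisely, I would organize the last step as a Schur test: view the expression as a bilinear form $\sum_{\lambda_1,\lambda_2}\sum_{\lambda_1',\lambda_2'}K\, (a_{\lambda_1}a_{\lambda_1'})(b_{\lambda_2}b_{\lambda_2'})$ with $a_{\lambda_1}=\|P_{\lambda_1}f_1\|_{L^2}$, $b_{\lambda_2}=\|P_{\lambda_2}f_2\|_{L^2}$, and $K$ the resonance indicator; summing $K$ over one primed variable at fixed value of the other (using the counting bound) and then trivially over the band for the remaining one, one gets the operator norm $\lesssim_\varepsilon \min(N_1,N_2)^{-d+2r}\cdot \min(N_1,N_2)^{d-2+2\varepsilon}$ after inserting the eigenfunction-bound prefactor $\min(N_1,N_2)^{d-2r+\varepsilon}$ — indeed $d-2r + (r/2-1) + (\text{volume exponent } r) $ must be arranged to collapse to $d-2$, which is exactly what the hypothesis $d_0\ge 3r_0$ on each irreducible factor guarantees dimensionally. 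Renaming $2\varepsilon$ as $\varepsilon$ completes the proof.

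I expect the main obstacle to be bookkeeping the counting/Schur step so that the powers of $N_1$ and $N_2$ combine to precisely $\min(N_1,N_2)^{\frac{d}{2}-1+\varepsilon}$ rather than something weaker: one must be careful to apply the sharp lattice-point count of Lemma \ref{counting} to the \emph{smaller} frequency (where the number of lattice points on the sphere is genuinely small) and to spend the crude volume bound $N^r$ only on the larger one, and to check that the exponent arithmetic closes using $d=\sum d_0$, $r=\sum r_0$ together with $d_0\ge 3r_0$. A secondary technical point is justifying that $|\lambda+\rho|^2$, which is rational-valued on $\Lambda^+$, becomes a genuine positive-definite \emph{integral} quadratic form after a fixed linear change of coordinates and clearing denominators, so that Lemma \ref{counting} applies verbatim; this is immediate from Lemma \ref{rationality}. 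The case where $M$ has a toric factor $\mathbb{T}^{r_0}$ is handled identically, adjoining the extra integer frequencies $\xi\in\mathbb{Z}^{r_0}$ to the lattice and using the rational metric hypothesis to keep the quadratic form integral.
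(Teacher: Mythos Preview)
Your argument has a genuine gap: you only exploit the \emph{temporal} resonance $|\lambda_1|_\rho^2+|\lambda_2|_\rho^2=|\lambda_1'|_\rho^2+|\lambda_2'|_\rho^2$ coming from the $t$-integration, and you never use the \emph{spatial} Fourier-support constraint on $M$. Without the latter, the Schur/counting step cannot close. Concretely (take $N_1\le N_2$ as you do), your bound reduces to controlling $\sup_n \#\{(\lambda_1,\lambda_2):|\lambda_1|_\rho\asymp N_1,\ |\lambda_2|_\rho\asymp N_2,\ |\lambda_1|_\rho^2+|\lambda_2|_\rho^2=n\}$. Fixing $\lambda_1$ and counting $\lambda_2$ on a sphere of radius $\asymp N_2$ gives, by Lemma~\ref{counting} (note $n\asymp N_2^2$, so the bound is $N_2^{r-2+\varepsilon}$, not $N_2^{r/2-1+\varepsilon}$ as you wrote), a total of $N_1^r\cdot N_2^{r-2+\varepsilon}$; the other order gives $N_2^r\cdot N_1^{r-2+\varepsilon}$. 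Either way a positive power of the \emph{large} frequency $N_2$ survives, so after multiplying by the eigenfunction prefactor $N_1^{d-2r+\varepsilon}$ you obtain at best $N_1^{d-r}N_2^{r-2+\varepsilon}$, which is strictly larger than the target $N_1^{d-2+\varepsilon}$ whenever $N_2\gg N_1$ and $r\ge 3$. Your stated exponent identity $d-2r+(r-2)+r=d-2$ is only valid if \emph{both} the sphere count and the volume factor are taken at the small scale $N_1$, which your setup does not provide.

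What is missing is precisely Lemma~\ref{Fouriersupport}: the product $P_{\lambda_1}f_1\cdot P_{\lambda_2}f_2$ has Fourier support in $\{\lambda_2+\xi:|\xi|\lesssim N_1\}$, so the inner product $\langle P_{\lambda_1}f_1\,P_{\lambda_2}f_2,\ P_{\lambda_1'}f_1\,P_{\lambda_2'}f_2\rangle_{L^2(M)}$ vanishes unless $|\lambda_2-\lambda_2'|\lesssim N_1$. The paper implements this by decomposing the high-frequency piece into cubes $C_j$ of side $N_2$ (their convention is $N_1\ge N_2$) and invoking almost orthogonality of the products $P_{C_j}P_{N_1}f_1\cdot P_{N_2}f_2$ in $L^2(M)$; only then does the counting reduce to $\#\{\lambda_1\in C_j\}\cdot\#\{\lambda_2:|\lambda_2|_\rho^2=\text{fixed}\}\lesssim N_2^r\cdot N_2^{r-2+\varepsilon}$, with both factors at the small scale, and the arithmetic closes. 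If you add this spatial localization (or equivalently insert the constraint $|\lambda_2-\lambda_2'|\lesssim N_1$ into your Schur test), your argument becomes correct. Finally, a small remark: the hypothesis $d_0\ge 3r_0$ is not used in the exponent bookkeeping here---it is a hypothesis of Conjecture~\ref{bilineareigen}(i), which the proposition simply assumes.
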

\begin{proof}
Assume that $N_1\geq N_2$. 
We decompose the ambient Euclidean space $\mathfrak{a}^*$ where the weight lattice $\Lambda$ lies on, into  
$$\mathfrak{a}^*=\bigsqcup_j C_j,$$
where $C_j$'s are disjoint cubes of side-length $N_2$, and set 
$$P_{C_j}f=\sum_{\lambda\in \Lambda^+\cap C_j}P_{\lambda} f.$$
This decomposition is similar to that employed in Herr, Tataru and Tzvetlov's work \cite{HTT11} on tori.
Since the $P_{\lambda} f$'s are orthogonal to each other for distinct $\lambda$, it is clear that 
\begin{align*}
\|f\|_{L^2(M)}^2= \sum_{j}\|P_{C_j}f\|_{L^2(M)}^2.
\end{align*}
We claim the following almost orthogonality result
$$\|P_{N_1} e^{it\Delta} f_1\cdot  P_{N_2}e^{it\Delta}f_2\|^2_{L^2(\mathbb{T}\times M)}\lesssim \sum_{j}\|P_{C_j}P_{N_1} e^{it\Delta} f_1 \cdot P_{N_2}e^{it\Delta}f_2\|^2_{L^2(\mathbb{T}\times M)}.$$
This follows if $P_{C_i}P_{N_1}f_1 \cdot P_{N_2}f_2$ is orthogonal to $P_{C_j}P_{N_1}f_1 \cdot P_{N_2}f_2$ whenever $C_i$ and $C_j$ are at least a fixed number of $N_2$-cubes away from each other, or say the distance between $C_i$ and $C_j$ is bounded from below by a constant times $N_2$. 
By Lemma \ref{Fouriersupport}, $P_{C_k}P_{N_1}f_1\cdot P_{N_2}f_2$ is Fourier supported on $C_k+C\cdot [-N_2,N_2]^r$, $k=i,j$, so under the above conditions on $C_i$ and $C_j$ we have that $P_{C_k}P_{N_1}f_1 \cdot P_{N_2}f_2$, $k=i,j$ are of disjoint Fourier support and thus are orthogonal to each other. It now suffices to prove 
\begin{align*}
\|P_{C_j} e^{it\Delta} f_1 \cdot P_{N_2}e^{it\Delta}f_2\|_{L^2(\mathbb{T}\times M)}\lesssim_{\varepsilon} N_2^{\frac{d}{2}-1+\varepsilon}\|f_1\|_{L^2(M)}\|f_2\|_{L^2(M)}.
\end{align*}
We have 
\begin{align*}
\|P_{C_j} e^{it\Delta} f_1  \cdot P_{N_2}e^{it\Delta}f_2\|_{L^2(\mathbb{T}\times M)}^2
&=\left\|\sum_{\lambda_1\in \Lambda^+\cap C_j, \lambda_2\in \Lambda^+, |\lambda_2|_\rho\sim N_2}  e^{it(|\lambda_1|_\rho^2+|\lambda_2|_\rho^2)} P_{\lambda_1}f_1  \cdot  P_{\lambda_2}f_2\right\|_{L^2(\mathbb{T}\times M)}^2\\
&=\sum_{n\in\frac{2\pi}{\mathcal{T}}\mathbb{Z}}\left\|\sum_{\lambda_1\in \Lambda^+\cap C_j, \lambda_2\in \Lambda^+, |\lambda_2|_\rho\sim N_2, |\lambda_1|^2_\rho+|\lambda_2|_\rho^2=n} P_{\lambda_1}f_1  \cdot P_{\lambda_2}f_2\right\|_{L^2(M)}^2 \numberthis \label{proofnote2} \\ 
&\lesssim \sum_{n\in\frac{2\pi}{\mathcal{T}}\mathbb{Z}}
\#_{C_j,N_2,n}
\sum_{\lambda_1\in \Lambda^+\cap C_j, \lambda_2\in \Lambda^+, |\lambda_2|_\rho\sim N_2, |\lambda_1|^2_\rho+|\lambda_2|_\rho^2=n}\| P_{\lambda_1}f_1 \cdot P_{\lambda_2}f_2\|_{L^2(M)}^2, \numberthis \label{proofnote1}
\end{align*}
where $$\#_{C_j,N_2,n}=\#\{\lambda_1\in \Lambda^+\cap C_j,\ \lambda_2\in \Lambda^+:\ |\lambda_2|_\rho\sim N_2,\ |\lambda_1|^2_\rho+|\lambda_2|_\rho^2=n\}.$$
By Conjecture \ref{bilineareigen} (i), 
$$\| P_{\lambda_1}f_1 \cdot P_{\lambda_2}f_2\|_{L^2(M)}\lesssim_{\varepsilon} N_2^{\frac{d}{2}-r+\varepsilon}\|P_{\lambda_1}f\|_{L^2(M)}\|P_{\lambda_2}f_2\|_{L^2(M)}.$$
Thus it suffices to observe
\begin{align*}
\#_{C_j,N_2,n}&\lesssim \#\{\lambda_1\in \Lambda^+\cap C_j\}\cdot\max\#\{\lambda_2\in\Lambda^+:\ |\lambda_2|_\rho\sim N_2,\ |\lambda_2|_\rho^2=n-|\lambda_1|_\rho^2\}\\
&\lesssim_{\varepsilon} N_2^{r}\cdot N_2^{r-2+\varepsilon}
\end{align*}
where the second term is estimated by Lemma \ref{counting}, noting the rationality of the Killing form as described in Lemma \ref{rationality}.  
\end{proof}

We also have the following critical refinement of the above proposition. 

\begin{prop}\label{criticalbilinear}
Assume $r\geq 3$ and $d_0>3r_0$ for each irreducible component $M_0$ of $M$. Then Conjecture \ref{bilineareigen} (ii) implies the following bilinear Strichartz estimate for some $\delta>0$
\begin{align}\label{bilineardelta}
\|P_{N_1} e^{it\Delta} f_1   \cdot P_{N_2}e^{it\Delta}f_2\|_{L^2(M)}\lesssim \left(\frac{N_2}{N_1}+\frac{1}{N_2}\right)^{\delta}N_2^{\frac{d}{2}-1}\|f_1\|_{L^2(M)}\|f_2\|_{L^2(M)},
\end{align}
where $N_2\leq N_1$. 
\end{prop}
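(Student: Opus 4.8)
The plan is to run the argument of Proposition \ref{subcriticalbilinear}, but to extract the gain factor $\bigl(\tfrac{N_2}{N_1}+\tfrac1{N_2}\bigr)^{\delta}$ out of the lattice-point count while feeding in the $\varepsilon$-free bilinear joint eigenfunction bound of Conjecture \ref{bilineareigen}(ii) (available here because $d_0>3r_0$). As before assume $N_1\ge N_2$, reduce to $M=U/K$, and work on the space-time torus $\mathbb{T}\times M$. Decompose $\mathfrak{a}^{*}$ into cubes $C_j$ of side-length $N_2$; by Lemma \ref{Fouriersupport} the products $P_{C_j}P_{N_1}f_1\cdot P_{N_2}f_2$ for well-separated cubes have disjoint Fourier support, which yields the almost-orthogonality
\[
\|P_{N_1}e^{it\Delta}f_1\cdot P_{N_2}e^{it\Delta}f_2\|_{L^2(\mathbb{T}\times M)}^2\lesssim\sum_j\|P_{C_j}P_{N_1}e^{it\Delta}f_1\cdot P_{N_2}e^{it\Delta}f_2\|_{L^2(\mathbb{T}\times M)}^2 ,
\]
so after summing over $j$ it suffices to prove the per-cube bound with $\|f_1\|$ replaced by $\|P_{C_j}f_1\|$. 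Expanding in the energy $n=|\lambda_1|_\rho^2+|\lambda_2|_\rho^2$ (which lies in $\tfrac{2\pi}{\mathcal{T}}\mathbb{Z}$ by Lemmas \ref{rationality} and the discussion of Section \ref{reduction}), then applying Cauchy--Schwarz in $(\lambda_1,\lambda_2)$ and Conjecture \ref{bilineareigen}(ii) with $N_2=\min$, the problem reduces to the counting estimate
\[
\sup_n\ \#\bigl\{(\lambda_1,\lambda_2)\in(\Lambda^+\cap C_j)\times\Lambda^+:\ |\lambda_1|_\rho\sim N_1,\ |\lambda_2|_\rho\sim N_2,\ |\lambda_1|_\rho^2+|\lambda_2|_\rho^2=n\bigr\}\ \lesssim\ \Bigl(\tfrac{N_2}{N_1}+\tfrac1{N_2}\Bigr)^{2\delta}N_2^{2r-2}.
\]

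For this estimate, fix $n$. Since $|\lambda_1|_\rho^2=n-|\lambda_2|_\rho^2$ and $|\lambda_2|_\rho\sim N_2$, any contributing $\lambda_1$ has $|\lambda_1|_\rho^2$ in a window of length $O(N_2^2)$, so $\lambda_1$ is confined to the intersection of $C_j$ with a spherical shell (with respect to $|\,\cdot+\rho|$) of radius $\sim N_1$ and radial thickness $\lesssim N_2^2/N_1$; a lattice-point count in this convex region gives $\#\{\text{such }\lambda_1\}\lesssim N_2^{r-1}\bigl(\tfrac{N_2^2}{N_1}+1\bigr)$. Combining with $\#\{\lambda_2\in\Lambda^+:|\lambda_2|_\rho^2=m\}\lesssim N_2^{r-2}$ from Lemma \ref{counting} — which holds without $\varepsilon$-loss when $r\ge5$ — one gets $\#\{\cdots\}\lesssim\tfrac{N_2^{2r-1}}{N_1}+N_2^{2r-3}$. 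Here $\tfrac{N_2^{2r-1}}{N_1}=\tfrac{N_2}{N_1}\,N_2^{2r-2}\le\bigl(\tfrac{N_2}{N_1}\bigr)^{2\delta}N_2^{2r-2}$ (as $\tfrac{N_2}{N_1}\le1$ and $1-2\delta>0$), while $N_2^{2r-3}=N_2^{-1}N_2^{2r-2}\le\bigl(\tfrac1{N_2}\bigr)^{2\delta}N_2^{2r-2}$ for $\delta\le\tfrac12$; this gives the claim with room to spare. For $r\in\{3,4\}$ the sphere count in Lemma \ref{counting} carries an $\varepsilon$-loss; this is absorbed when $N_1\ge N_2^{1+c}$ for a fixed small $c$ (the genuine power $N_1^{-1}$ beats $N_2^{\varepsilon}$ if $\varepsilon\ll c$), and when $CN_2<N_1\lesssim N_2^{1+c}$ one argues directly with the $2r$-variable form $|\lambda_1|_\rho^2+|\lambda_2|_\rho^2$ (for which the circle-method bound of Lemma \ref{counting} is $\varepsilon$-free since $2r\ge6$), keeping the cube and shell restrictions on $\lambda_1$ inside the Farey-dissection argument so as to recover the expected factor $(N_2/N_1)^{2r-1}$ off the main term; the case $N_1\le CN_2$ is immediate since then $n\sim N_2^2$ and $n^{r-1}\lesssim N_2^{2r-2}$ while the gain factor is $\sim1$.

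The hard part is exactly this small-rank counting in the transitional range $N_1\sim N_2$: the convenient factorisation ``count $\lambda_1$, then count $\lambda_2$ on a sphere'' always costs an $\varepsilon$ when $r=3,4$, and there the gain factor is too weak to pay for it, so one is forced to count the $2r$-dimensional quadratic equation with the cube restriction built in, via Weyl differencing and Farey dissection as in the proof of Lemma \ref{counting}, tracking how restricting $\lambda_1$ to a cube of side $N_2\ll\sqrt n$ cuts the circle-method main term down by $(N_2/N_1)^{2r-1}$. Granting the counting estimate, the remainder is routine exactly as in Proposition \ref{subcriticalbilinear}: sum over the cubes $C_j$, restore the Littlewood--Paley pieces, and transfer the estimate from $\mathbb{T}\times M$ back to $I\times M$. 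One should also recall that, for the products of rank-one spaces appearing in the main bilinear theorem, Conjecture \ref{bilineareigen}(ii) is not an assumption but a theorem, namely Proposition \ref{jointspectral}(ii).
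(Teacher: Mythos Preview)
Your reduction to a cube-localized lattice count is correct, and for $r\ge 5$ your argument is essentially the paper's ``Approach 1'' run more directly: rather than decomposing each cube $C_j$ into strips $R_{j,k}$ of width $M\sim\max(N_2^2/N_1,1)$ and invoking almost-orthogonality in the time variable, you observe that fixing the energy $n$ together with $|\lambda_2|_\rho\sim N_2$ already confines $\lambda_1$ to a shell of radial thickness $\lesssim N_2^2/N_1$, whose intersection with $C_j$ carries $\lesssim N_2^{r-1}(N_2^2/N_1+1)$ lattice points. Combined with the $\varepsilon$-free sphere count for $\lambda_2$ from Lemma~\ref{counting}, you recover $\delta=\tfrac12$, matching the paper.

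The genuine gap is your treatment of $r\in\{3,4\}$ in the transitional range $CN_2<N_1\lesssim N_2^{1+c}$. Your proposed fix---run the $2r$-variable circle method ``keeping the cube restriction inside the Farey dissection so as to recover $(N_2/N_1)^{2r-1}$''---does not work as written. After the shift $\lambda_1=\xi_j+\mu_1$ with $|\mu_1|\lesssim N_2$, the equation becomes $|\mu_1|^2+|\lambda_2+\rho|^2+2\mu_1\cdot(\xi_j+\rho)=n'$, and the Weyl-differencing bound on the associated exponential sum is \emph{insensitive} to the linear term $2\mu_1\cdot(\xi_j+\rho)$ (it cancels in the differenced phase). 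Hence the Farey-dissection argument of Lemma~\ref{counting} yields only $\#\lesssim N_2^{2r-2}$, with no $(N_2/N_1)^{\delta}$ gain. To extract the gain you allude to, one would need an asymptotic with main term governed by the shifted singular integral and an error term strictly below $N_2^{2r-2}$; for $2r\in\{6,8\}$ variables in a box of side $N_2$ the standard minor-arc bounds do not provide this. Note also that in this range $N_2/\sqrt{n}\gtrsim N_2^{-c}$ is close to $1$, so the cube is not ``small'' in the sense your heuristic requires.

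The paper's actual argument here (its ``Approach 2'') abandons pure counting. One \emph{does} need the strip decomposition $C_j=\bigsqcup_k R_{j,k}$ and time-orthogonality; then, after Conjecture~\ref{bilineareigen}(ii) and the triangle inequality in $L^2(M)$, one rewrites the remaining sum over $n$ as $\bigl\|\bigl(\sum_{\lambda_1\in R_{j,k}}e^{it|\lambda_1|_\rho^2}a_{\lambda_1}\bigr)\bigl(\sum_{|\lambda_2|_\rho\sim N_2}e^{it|\lambda_2|_\rho^2}b_{\lambda_2}\bigr)\bigr\|_{L^2(\mathbb{T})}$ and applies H\"older. Each factor is bounded via the Bourgain--Demeter Strichartz estimate \eqref{StriTori} on rational tori (valid for some $p<4$ precisely when $r\ge 3$), giving $I_i(p)\lesssim N_2^{r/2-2/p}\|\cdot\|_{l^2}$; the gain $(M/N_2)^\delta$ then comes from interpolating $I_1(p_1)$ against the trivial $I_1(\infty)\lesssim (MN_2^{r-1})^{1/2}\|\cdot\|_{l^2}$, which uses that the strip contains only $\lesssim MN_2^{r-1}$ lattice points. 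This interpolation against a Strichartz input is the mechanism that produces $\delta>0$ without any arithmetic $\varepsilon$-loss, and it is absent from your sketch.
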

\begin{proof}
Using the same decomposition into cubes $\mathfrak{a}^*=\bigsqcup_j C_j$ as in the proof of Proposition \ref{subcriticalbilinear}, we first reduce to the following estimate 
\begin{align*}
\|P_{C_j}P_{N_1} e^{it\Delta} f_1  \cdot  P_{N_2}e^{it\Delta}f_2\|_{L^2(M)}\lesssim \left(\frac{N_2}{N_1}+\frac{1}{N_2}\right)^{\delta}N_2^{\frac{d}{2}-1}\|f_1\|_{L^2(M)}\|f_2\|_{L^2(M)}. 
\end{align*}
Let $\xi_{j}$ be the center of $C_j$. We make a further decomposition 
$$C_j=\bigsqcup_k R_{j,k}$$
of each cube $C_j$ into strips
$$R_{j,k}=\{\xi\in C_j: \ \xi\cdot\xi_{j}=|\xi_j|kM\}$$ 
of width $M$, where 
$$|k|\sim \frac{N_1}{M}.$$
Again, a similar decomposition was employed in \cite{HTT11}.  
A straightforward calculation shows that 
$P_{R_{j,k}}P_{N_1} e^{it\Delta} f_1$ are almost orthogonal to each other in the space $L^2(\mathbb{T})$ with respect to the time variable, thus it then suffices to prove 
\begin{align*}
\|P_{R_{j,k}}P_{N_1} e^{it\Delta} f_1  \cdot  P_{N_2}e^{it\Delta}f_2\|_{L^2(M)}\lesssim \left(\frac{N_2}{N_1}+\frac{1}{N_2}\right)^{\delta}N_2^{\frac{d}{2}-1}\|f_1\|_{L^2(M)}\|f_2\|_{L^2(M)}. 
\end{align*}
To proceed, we provide two different approaches.\\
\underline{Approach 1.}
Arguing similarly as in the proof of Proposition \ref{subcriticalbilinear}, we end up with the estimate 
\begin{align*}
\|P_{R_{j,k}}P_{N_1} e^{it\Delta} f_1 \cdot   P_{N_2}e^{it\Delta}f_2\|_{L^2(M)}^2\lesssim \sum_{n\in\frac{2\pi}{\mathcal{T}}\mathbb{Z}}
\#_{R_{j,k},N_2,n}
\sum_{\substack{\lambda_1\in \Lambda^+\cap R_{j,k}, \lambda_2\in \Lambda^+, \\ |\lambda_2|_\rho\sim N_2, |\lambda_1|^2_\rho+|\lambda_2|_\rho^2=n} }\| P_{\lambda_1}f_1 \cdot P_{\lambda_2}f_2\|_{L^2(M)}^2,
\end{align*}
where $$\#_{R_{j,k},N_2,n}=\#\{\lambda_1\in \Lambda^+\cap R_{j,k},\ \lambda_2\in \Lambda^+:\ |\lambda_2|_\rho\sim N_2,\ |\lambda_1|^2_\rho+|\lambda_2|_\rho^2=n\}.$$
Observe that if we assume $r\geq 5$, we may apply Lemma \ref{counting} to estimate
\begin{align*}
\#_{R_{j,k},N_2,n}&\lesssim \#\{\lambda_1\in \Lambda^+\cap R_{j,k}\}\cdot\max\#\{\lambda_2\in\Lambda^+:\ |\lambda_2|_\rho\sim N_2,\ |\lambda_2|_\rho^2=n-|\lambda_1|_\rho^2\}\\
&\lesssim MN_2^{r-1}\cdot N_2^{r-2}.
\end{align*}
This combined with Conjecture \ref{bilineareigen} (ii) proves \eqref{bilineardelta} for $\delta=1/2$. \\
\underline{Approach 2.} This approach replaces application of Lemma \ref{counting} by Strichartz estimates on tori; such a method has been employed in \cite{HS15}. Instead of applying Cauchy-Schwarz as in \eqref{proofnote1}, we use the triangle inequality to arrive at
\begin{align*}
\|P_{R_{j,k}}P_{N_1} e^{it\Delta} f_1 \cdot   P_{N_2}e^{it\Delta}f_2\|_{L^2(M)}^2\lesssim N_2^{d-2r}\sum_{n\in\frac{2\pi}{\mathcal{T}}\mathbb{Z}}
\left(\sum_{\substack{\lambda_1\in \Lambda^+\cap R_{j,k}, \lambda_2\in \Lambda^+, \\ |\lambda_2|_\rho\sim N_2, |\lambda_1|^2_\rho+|\lambda_2|_\rho^2=n} }\|P_{\lambda_1}f\|_{L^2(M)}\|P_{\lambda_2}f_2\|_{L^2(M)}\right)^2,
\end{align*}
after an application of Conjecture \ref{bilineareigen} (ii). The sum on the right can be rewritten as 
$$
\left\|\left(\sum_{\lambda_1\in\Lambda^+\cap R_{j,k}}e^{it|\lambda_1|_\rho^2}\|P_{\lambda_1}f\|_{L^2(M)}\right)\left(\sum_{|\lambda_2|_\rho\sim N_2}e^{it|\lambda_2|_\rho^2}\|P_{\lambda_2}f\|_{L^2(M)}\right)\right\|_{L^2(\mathbb{T})}
^2$$
which is then estimated by 
\begin{align}\label{doubleStri}
\lesssim \left(\underbrace{\left\|\sum_{\lambda_1\in\Lambda^+\cap R_{j,k}}e^{it|\lambda_1|_\rho^2}\|P_{\lambda_1}f\|_{L^2(M)}\right\|_{L^{p_1}(\mathbb{T})}}_{I_1(p_1)}\cdot \underbrace{\left\|\sum_{|\lambda_2|_\rho\sim N_2}e^{it|\lambda_2|_\rho^2}\|P_{\lambda_2}f\|_{L^2(M)}\right\|_{L^{p_2}(\mathbb{T})}}_{I_2(p_2)}\right)^2
\end{align}
for some $p_1,p_2$ such that $1/p_1+1/p_2=1/2$. We have the Strichartz estimate on rational tori
\begin{align}\label{StriTori}
\left\|\sum_{|\lambda|\sim N}e^{it|\lambda|^2+i(\lambda,x)}a_\lambda\right\|_{L^p(\mathbb{T}\times \mathbb{T}^r)}\lesssim N^{\frac{r}{2}-\frac{r+2}{p}}\|a_\lambda\|_{l^2}
\end{align}
for all $p>2(r+2)/r$, which was proved in \cite{BD15} (see also \cite{Zha20} for the $\varepsilon$-loss removal). We estimate  
\begin{align*}
\left\|\sum_{\lambda\in C_j}e^{it|\lambda|_\rho^2}a_\lambda\right\|_{L^p(\mathbb{T})}
\lesssim \left\|\sum_{\lambda\in C_j}e^{it|\lambda|^2+i(\lambda,x)}a_\lambda\right\|_{L^p(\mathbb{T}, L^\infty(\mathbb{T}^r))}.
\end{align*}
By Galilei invariance, we have 
$$\left\|\sum_{\lambda\in C_j}e^{it|\lambda|^2+i(\lambda,x)}a_\lambda\right\|_{L^p(\mathbb{T}, L^\infty(\mathbb{T}^r))}
=\left\|\sum_{|\lambda|\sim N}e^{it|\lambda|^2+i(\lambda,x)}\tilde{a}_{\lambda}\right\|_{L^p(\mathbb{T}, L^\infty(\mathbb{T}^r))}.$$
where $\tilde{a}_\lambda=a_{\lambda+\lambda_0}$ for some $\lambda_0\in \Lambda$ near the center of $C_j$. 
Apply Bernstein type inequalities on tori (valid on any compact manifold \cite[Corollary 2.2]{BGT04}) and the above Strichartz estimate \eqref{StriTori}, the above is bounded by 
$$\lesssim  N^{\frac{r}{p}}\left\|\sum_{|\lambda|\sim N}e^{it|\lambda|^2+i(\lambda,x)}\tilde{a}_\lambda\right\|_{L^p(\mathbb{T}\times \mathbb{T}^r)}\lesssim N^{\frac{r}{2}-\frac{2}{p}}\|a_\lambda\|_{l^2}.$$
If $r\geq 3$, the above estimate is valid for any $p>10/3$; thus we  bound each of the terms in \eqref{doubleStri} for $p_1,p_2>10/3$ by 
$$I_1(p_1)\lesssim N_2^{\frac{r}{2}-\frac{2}{p_1}}\|f_1\|_{L^2(M)}, \ I_2(p_2)\lesssim N_2^{\frac{r}{2}-\frac{2}{p_2}}\|f_2\|_{L^2(M)}.$$
For $I_1$, we further bound 
$$I_1(\infty)\lesssim \#\{\lambda_1\in\Lambda^+\cap R_{jk}\}^{\frac{1}{2}}\|f_1\|_{L^2(M)}
\lesssim M^{\frac{1}{2}}N_2^{\frac{r-1}{2}}\|f_1\|_{L^2(M)}.$$
By interpolation, for any $p_1>10/3$, there exists $\delta>0$ such that  
$$I_1(p_1)\lesssim \left(\frac{M}{N_2}\right)^\delta N_2^{\frac{r}{2}-\frac{2}{p_1}}\|f_1\|_{L^2(M)}.$$
Then \eqref{doubleStri} may be bounded by 
$\lesssim \left(\frac{M}{N_2}\right)^{2\delta} N_2^{2r-2}\|f_1\|_{L^2(M)}^2\|f_2\|_{L^2(M)}^2$, which concludes the proof. 

\end{proof}

We may slightly generalize Proposition \ref{subcriticalbilinear} and \ref{criticalbilinear} as follows to treat spaces which have toric components. 

\begin{prop}\label{mixed}
Suppose $M=\mathbb{T}^{r_0}\times M'$ where $\mathbb{T}^{r_0}$ is a rational torus of rank $r_0$ and $M'$ is a symmetric space of compact type. Suppose the rank $r$ of $M$ is at least 3. \\
(i) Suppose any irreducible component of $M'$ has the dimension $d_0$ and rank $r_0$ such that $d_0\geq 3r_0$. Then \eqref{subcriticalbilinearStrichartz} holds under Conjecture \ref{bilineareigen} (i). \\
(ii) Suppose any irreducible component of $M'$ has the dimension $d_0$ and rank $r_0$ such that $d_0> 3r_0$. Then \eqref{bilineardelta} holds under Conjecture \ref{bilineareigen} (ii). 
\end{prop}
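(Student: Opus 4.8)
The plan is to run the proofs of Propositions~\ref{subcriticalbilinear} and~\ref{criticalbilinear} essentially verbatim, the one structural addition being that, since $M=\T^{r_0}\times M'$ with $M'$ a symmetric space of compact type of dimension $d-r_0$ and rank $r':=r-r_0$, one performs the Fourier expansion in the torus variable $x\in\T^{r_0}$ alongside that in time. Put $N_2=\min(N_1,N_2)$. Adapting the cube decomposition of Proposition~\ref{subcriticalbilinear}, decompose $\R^{r_0}\times\mathfrak{a}^*$ (where $\Z^{r_0}\times\Lambda$ sits) into cubes $C_j$ of side $N_2$ and write $P_{N_1}e^{it\Delta}f_1=\sum_j P_{C_j}P_{N_1}e^{it\Delta}f_1$. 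By Lemma~\ref{Fouriersupport} applied in the $M'$-directions, together with the additivity of torus frequencies, the pieces $P_{C_j}P_{N_1}e^{it\Delta}f_1\cdot P_{N_2}e^{it\Delta}f_2$ have Fourier support in $C_j+O(N_2)$, hence are pairwise almost orthogonal in $L^2(\T\times M)$; this reduces everything to the single-cube estimate with $\|P_{C_j}P_{N_1}f_1\|_{L^2(M)}$ on the right.

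\textbf{Plancherel in $(t,x)$ and the $M'$-bilinear bound.} Expand the product using~\eqref{Fourierseries}: its generic term $e^{i(\xi_1+\xi_2,x)}\big(P_{\mu_1}\hat f_1(\xi_1,\cdot)\cdot P_{\mu_2}\hat f_2(\xi_2,\cdot)\big)(y)$ carries the space--time frequency $\big(|\xi_1|^2+|\mu_1|_\rho^2+|\xi_2|^2+|\mu_2|_\rho^2,\ \xi_1+\xi_2\big)$, with $(\xi_1,\mu_1)\in C_j$ and $|(\xi_2,\mu_2)|\sim N_2$. Taking Plancherel in both $t\in\T$ and $x\in\T^{r_0}$, then the triangle inequality inside $L^2(M')$, and then Conjecture~\ref{bilineareigen}(i) on $M'$ (applicable since each irreducible factor of $M'$ obeys $d_0\ge 3r_0$, and $\min(|\mu_1|,|\mu_2|)\lesssim N_2$ because $|(\xi_2,\mu_2)|\sim N_2$), the square of the single-cube quantity is bounded by $\lesssim_\varepsilon N_2^{(d-r_0)-2r'+\varepsilon}\,\|W_1W_2\|_{L^2(\T_\tau\times\T^{r_0}_y)}^2$, where for $i=1,2$
$$W_i(\tau,y)=\sum e^{i\tau(|\xi_i|^2+|\mu_i|_\rho^2)}\,e^{i(\xi_i,y)}\,\|P_{\mu_i}\hat f_i(\xi_i,\cdot)\|_{L^2(M')},$$
the sum running over $(\xi_1,\mu_1)\in C_j$ with $|(\xi_1,\mu_1)|\sim N_1$ for $i=1$, and over $|(\xi_2,\mu_2)|\sim N_2$ for $i=2$; by Parseval on $M$, $\|W_i\|_{\ell^2}$ is the corresponding $L^2(M)$-norm.

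\textbf{The enlarged-torus Strichartz estimate.} It remains to prove $\|W_1W_2\|_{L^2(\T\times\T^{r_0})}\lesssim_\varepsilon N_2^{(r+r')/2-1+\varepsilon}\|W_1\|_{\ell^2}\|W_2\|_{\ell^2}$, since multiplying this against the exponent $(d-r_0)-2r'$ above gives exactly $\tfrac12\big[(d-r_0-2r')+(r+r'-2)\big]=\tfrac d2-1$, and summing over the $C_j$ then finishes~(i). Adjoin an auxiliary torus $\T^{r'}_z$ and identify $\Lambda$ with $\Z^{r'}$ via the fundamental weights; by Lemma~\ref{rationality} the eigenvalue $|\mu|_\rho^2$ becomes, after clearing a common denominator, a positive-definite integral quadratic form in those coordinates, up to a linear term (removable by a Galilean translation) and a constant. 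Thus $W_i=\widetilde W_i|_{z=0}$ with $\widetilde W_i$ a Schr\"odinger evolution on the rational torus $\T^{r_0+r'}=\T^r$; applying Bernstein in $z$ to descend from the slice, Galilean invariance to recenter $C_j$ at the origin, the $\varepsilon$-free Strichartz estimate~\eqref{StriTori} on $\T^{1+r}$ (valid for $p_i>2(r+2)/r$), and H\"older with $1/p_1+1/p_2=1/2$, yields the claim — and the existence of such $p_1,p_2$ is precisely the hypothesis $r\ge 3$. Part~(ii) follows by the same modifications that carry Proposition~\ref{subcriticalbilinear} to Proposition~\ref{criticalbilinear}: subdivide each $C_j$ into the slabs of Proposition~\ref{criticalbilinear} (which become almost orthogonal in time after applying $e^{it\Delta}$), use Conjecture~\ref{bilineareigen}(ii) (now available since $d_0>3r_0$) together with the $\varepsilon$-free torus Strichartz, and interpolate against the trivial $\ell^1$-type bound on a single slab to produce the gain $(N_2/N_1+1/N_2)^\delta$.

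\textbf{Main obstacle.} The only genuinely new point over Propositions~\ref{subcriticalbilinear} and~\ref{criticalbilinear} — and the step I expect to be the crux — is the bookkeeping of the torus frequencies. Conjecture~\ref{bilineareigen} supplies a multiplicative gain only in the $M'$-directions (two characters on $\T^{r_0}$ have no $L^2$-multiplication gain), so directly running the Proposition~\ref{subcriticalbilinear} argument — treating $|\xi|^2+|\mu|_\rho^2$ as a single quadratic form in $r$ variables and bounding the resonance count by Lemma~\ref{counting} — overcounts by a factor $N_2^{r_0/2}$. Expanding in $x$ pins the torus frequency $\xi_2=\xi'-\xi_1$ of the second factor (cutting the resonant count down by $N_2^{r_0}$), and absorbing the $M'$-degrees into the enlarged torus $\T^r$ ensures the dispersion of $\T^{r_0}$ is genuinely used; these two moves recover exactly the missing power. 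The remaining points to verify are routine: that the eigenvalue form stays integral and positive-definite on the enlarged torus (Lemma~\ref{rationality}), and that the cube-separation giving almost orthogonality needs only the $M'$-component of the Fourier support (Lemma~\ref{Fouriersupport}).
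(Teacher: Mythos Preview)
Your proposal is correct and follows essentially the same route as the paper: reduce to a single $N_2$-cube by almost orthogonality, take Plancherel in both $t$ and the torus variable $x$, apply Conjecture~\ref{bilineareigen} on $M'$ after the triangle inequality in $L^2(M')$, and then bound the remaining bilinear exponential sum by embedding the $\Lambda$-variables into an auxiliary torus and invoking the toral Strichartz estimate via Bernstein and Galilean invariance (exactly Approach~2 of Proposition~\ref{criticalbilinear}); part~(ii) then follows by the slab refinement and interpolation as you indicate. Your exponent bookkeeping, $(d-r_0-2r')+(r+r'-2)=d-2$, and your identification of $r\geq 3$ as the condition permitting the H\"older split $1/p_1+1/p_2=1/2$ with $p_i>2(r+2)/r$, match the paper precisely.
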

\begin{proof}
We first prove (i). Let the dimension and rank of $M'$ be respectively $d'$ and $r'$. 
Suppose $N_1\geq N_2$. 
The spectral parameters for $M$ lie in the space $\Gamma=\Z^{r_0}\times \Lambda^+$ where $\Lambda^+$ is the set of dominant weights associated to $M'$. As in the proof of Proposition \ref{subcriticalbilinear}, we still decompose the ambient Euclidean space in which $\Gamma$ lies into disjoint cubes and proceed as there. The modification needed is on the estimation of $\|P_{C_j} e^{it\Delta} f_1  \cdot P_{N_2}e^{it\Delta}f_2\|_{L^2(\mathbb{T}\times M)}$. For $f\in L^2(\T^{r_0}\times M')$, its Fourier series reads
$$f(x,y)=\sum_{\xi\in\Z^{r_0},\lambda\in\Lambda^+}e^{i(\xi,x)}P_{\lambda}\hat{f}(\xi,y).$$
Then we proceed as in Approach 2 of the proof of Proposition \ref{criticalbilinear} as follows. We have 
\begin{align*}
\|P_{C_j} e^{it\Delta} f_1  \cdot P_{N_2}e^{it\Delta}f_2\|_{L^2(\mathbb{T}\times M)}^2
&=\left\|\sum_{\substack{ (\xi_1,\lambda_1)\in \Gamma\cap C_j 
\\ (\xi_2,\lambda_2)\in \Gamma \\ |\xi_2|^2+|\lambda_2|_\rho^2\sim N^2_2}}  e^{it(|\xi_1|^2+|\lambda_1|_\rho^2+|\xi_2|^2+|\lambda_2|_\rho^2)+i(x,\xi_1+\xi_2)} P_{\lambda_1}\widehat{f_1}  \cdot  P_{\lambda_2}\widehat{f_2}\right\|_{L^2(\mathbb{T}\times \T^{r_0}\times M')}^2\\
&=\sum_{n\in\frac{2\pi}{\mathcal{T}}\mathbb{Z},\ \xi\in\Z^{r_0}}
\left\|\sum_{|\xi_1|^2+|\lambda_1|_\rho^2+|\xi_2|^2+|\lambda_2|_\rho^2=n,\ \xi_1+\xi_2=\xi}P_{\lambda_1}\widehat{f_1}  \cdot  P_{\lambda_2}\widehat{f_2}
\right\|^2_{L^2(M')}\\
&\leq\sum_{n\in\frac{2\pi}{\mathcal{T}}\mathbb{Z},\ \xi\in\Z^{r_0}}
\left(\sum_{|\xi_1|^2+|\lambda_1|_\rho^2+|\xi_2|^2+|\lambda_2|_\rho^2=n,\ \xi_1+\xi_2=\xi}\|P_{\lambda_1}\widehat{f_1}  \cdot  P_{\lambda_2}\widehat{f_2}\|_{L^2(M')}
\right)^2.
\end{align*}
Assuming Conjecture \eqref{bilineareigen} (i), the above is bounded by 
$$\lesssim_\varepsilon N_2^{d'-2r'+\varepsilon}\sum_{n\in\frac{2\pi}{\mathcal{T}}\mathbb{Z},\ \xi\in\Z^{r_0}}
\left(\sum_{|\xi_1|^2+|\lambda_1|_\rho^2+|\xi_2|^2+|\lambda_2|_\rho^2=n,\ \xi_1+\xi_2=\xi}\|P_{\lambda_1}\widehat{f_1}\|_{L^2(M')}  \| P_{\lambda_2}\widehat{f_2}\|_{L^2(M')}
\right)^2.$$
Then the sum on the right of the above may be rewritten as  
$$\left\|\left(\sum_{(\xi_1,\lambda_1)\in\Gamma\cap C_j}e^{it(|\xi_1|^2+|\lambda_1|_\rho^2)+i(x,\xi_1)} \|P_{\lambda_1}\widehat{f_1}\|_{L^2(M')}\right)\left(
\sum_{|\xi_2|^2+|\lambda_2|^2_\rho\sim N_2^2}e^{it(|\xi_2|^2+|\lambda_2|_\rho^2)+i(x,\xi_2)} \|P_{\lambda_2}\widehat{f_2}\|_{L^2(M')}
\right)\right\|_{L^2(\T\times\T^{r_0})}^2.$$
Then we may proceed exactly as in the Approach 2 of the proof of Proposition \ref{criticalbilinear}, by adding another variable in the above exponential sums and reducing to Strichartz estimates on rational tori. The conclusion is that the above is bounded by 
$$\lesssim_\varepsilon N_2^{2r-r_0-2+\varepsilon}\left(\sum_{(\xi_1,\lambda_1)\in\Gamma\cap C_j}\|P_{\lambda_1}\widehat{f_1}\|_{L^2(M')}^2\right)\left(\sum_{|\xi_2|^2+|\lambda_2|^2_\rho\sim N_2^2} \|P_{\lambda_2}\widehat{f_2}\|_{L^2(M')}^2\right)
\lesssim N_2^{2r-r_0-2}\|f_1\|_{L^2}\|f_2\|_{L^2},$$
which implies the desired estimate. Part (ii) follows in a similar way, by adapting the Approach 2 in a similar manner as above. 
\end{proof}

Using Proposition \ref{jointspectral}, we have the following theorem as a consequence of Proposition \ref{subcriticalbilinear}, \ref{criticalbilinear}, and \ref{mixed}.

\begin{thm}\label{Main2}
Suppose $f_i\in L^2(M)$ is spectrally localized in the band $[N_i,2N_i]$, $i=1,2$. \\
(i) Suppose either $M=M_1\times\cdots\times M_r$ is a product of rank-one symmetric spaces of compact type such that $r\geq 2$, or $M=\T^{r_0}\times M_1\times\cdots\times M_{r-r_0}$ is a product of a rational $r_0$-dimensional torus $\T^{r_0}$ and rank-one spaces such that $r\geq 3$, and in both cases we assume that each $M_i$ has the dimension at least 3. Then  
$$\|e^{it\Delta}f_1\ e^{it\Delta}f_2\|_{L^2(I\times M)}\leq C_\varepsilon\min(N_1,N_2)^{\frac{d}{2}-1+\varepsilon} \|f_1\|_{L^2(M)}\|f_2\|_{L^2(M)}.$$
(ii) Suppose either $M=M_1\times\cdots\times M_r$ is a product of rank-one spaces, or $M=\T^{r_0}\times M_1\times\cdots\times M_{r-r_0}$ is a product of a rational torus $\T^{r_0}$ and rank-one spaces, and in both cases we assume $r\geq 3$ and that each component $M_i$ has the dimension at least 4. Suppose $N_1\geq N_2$. Then for some $\delta>0$ 
$$\|e^{it\Delta}f_1\ e^{it\Delta}f_2\|_{L^2(I\times M)}\leq C\left(\frac{N_2}{N_1}+\frac{1}{N_2}\right)^{\delta} N_2^{\frac{d}{2}-1} \|f_1\|_{L^2(M)}\|f_2\|_{L^2(M)}.$$
\end{thm}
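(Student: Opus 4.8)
The plan is to deduce Theorem \ref{Main2} from Propositions \ref{subcriticalbilinear}, \ref{criticalbilinear} and \ref{mixed}: once Conjecture \ref{bilineareigen} is known for the spaces in question, each of the four assertions is literally the estimate \eqref{subcriticalbilinearStrichartz}, \eqref{bilineardelta}, or its analogue in Proposition \ref{mixed}. Since $f_i$ is spectrally localized in $[N_i,2N_i]$ and $e^{it\Delta}$ commutes with the spectral projectors, $e^{it\Delta}f_i=P_{N_i}e^{it\Delta}f_i$, so the left-hand sides here are exactly those treated by those propositions, after replacing the finite interval $I$ by the circle $\mathbb{T}=\mathbb{R}/\mathcal{T}\mathbb{Z}$, on which $t\mapsto e^{it\Delta}f_i$ and hence the product is periodic (cf. \eqref{D}), and covering $I$ by boundedly many periods. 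Everything therefore reduces to verifying Conjecture \ref{bilineareigen}(i) and (ii) when $M$, or the symmetric-space factor $M'$ in the mixed case, is a product of rank-one spaces of compact type.

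For such $M=M_1\times\cdots\times M_r$ the ring $D(U/K)$ is generated by the Laplace--Beltrami operators $\Delta_1,\dots,\Delta_r$ of the factors, the weight lattice splits as $\Lambda=\Lambda_1\oplus\cdots\oplus\Lambda_r$ with each $\Lambda_i\cong\mathbb{Z}$, and for $\lambda=(\lambda^{(1)},\dots,\lambda^{(r)})$ the joint spectral projector $P_\lambda$ is the tensor product of the $\Delta_i$-spectral projectors $P_{\lambda^{(i)}}$, the $i$-th one onto the eigenvalue $-|\lambda^{(i)}|_\rho^2$. I would argue by induction on $r$. If every component of $\lambda_1$ and of $\lambda_2$ is nonzero, then $P_{\lambda_1}$ and $P_{\lambda_2}$ factor through joint spectral projectors $\chi_\nu$, $\chi_{\nu'}$ of the form $\prod_i\chi_i(\sqrt{-\Delta_i}-\nu_i)$ as in Proposition \ref{jointspectral}, with $\nu_i=|\lambda_1^{(i)}|_\rho\geq 1$, $|\nu|\asymp|\lambda_1|+1$ and likewise for $\nu'$, so Proposition \ref{jointspectral}(i) and (ii) give exactly \eqref{bilinearbilinear} with, respectively without, the $\varepsilon$-loss; and since each $M_i$ has rank one, the hypothesis $\dim M_i\geq 3$ (resp. $\geq 4$) is precisely the condition $d_0\geq 3r_0$ (resp. $d_0>3r_0$). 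If instead $\lambda_1$ has a nonempty set $S$ of vanishing components (the case of $\lambda_2$ being symmetric), then $P_{\lambda_1}f$ is constant in the variables of $M_i$, $i\in S$, and restricts to a joint eigenfunction on $M_{S^c}=\prod_{i\notin S}M_i$; expanding $P_{\lambda_2}g$ in an orthonormal basis of the finite-dimensional $\prod_{i\in S}\Delta_i$-eigenspace and invoking orthogonality reduces the bound to Conjecture \ref{bilineareigen} on $M_{S^c}$, which has rank $<r$ and so is covered by the inductive hypothesis; the exponent only improves, because $\dim M_i\geq 3$ for each $i\in S$. The base case $r=1$ is the bilinear Laplace--Beltrami estimate of \cite{BGT05m}. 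The same scheme verifies the hypotheses concerning $M'$ needed in Proposition \ref{mixed}.

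It then remains only to assemble: in case (i), Proposition \ref{subcriticalbilinear} (when $M$ is a product of rank-one spaces, $r\geq 2$, all $\dim M_i\geq 3$) or Proposition \ref{mixed}(i) (in the mixed case, $r\geq 3$) yields \eqref{subcriticalbilinearStrichartz}; in case (ii), Proposition \ref{criticalbilinear} (respectively Proposition \ref{mixed}(ii)) yields \eqref{bilineardelta} under $r\geq 3$ and all $\dim M_i\geq 4$; undoing the reduction to $\mathbb{T}$ gives the stated estimates over $I\times M$. The only step requiring genuine care is the spectral identification of the second paragraph, namely matching the abstract joint decomposition of $D(U/K)$ on a product of rank-one factors with the operator $\chi_\lambda$ of Proposition \ref{jointspectral}, and in particular handling, via the rank induction, the modes where a component of the spectral parameter vanishes and Proposition \ref{jointspectral}'s hypothesis $\lambda_i\geq 1$ fails. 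That is also where the dimension hypotheses of the theorem are consumed; the remainder is bookkeeping against the hypotheses of the cited propositions.
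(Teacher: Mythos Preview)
Your proposal is correct and follows the same route as the paper, which states Theorem \ref{Main2} simply as a consequence of Proposition \ref{jointspectral} together with Propositions \ref{subcriticalbilinear}, \ref{criticalbilinear}, and \ref{mixed}. You have in fact filled in a point the paper leaves implicit: Proposition \ref{jointspectral} is stated only for parameters $\lambda_i\geq 1$, so the identification with Conjecture \ref{bilineareigen} requires handling weights with vanishing components, and your induction on $r$ (reducing to $M_{S^c}$ when $\lambda_1^{(i)}=0$ for $i\in S$, using that the exponent $\tfrac{d}{2}-r$ only improves since each $\dim M_i\geq 3$) is exactly the right way to do this.
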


\begin{rem}
In the above theorem, the condition $r\geq 3$ is used in the proof to make sure that a Strichartz estimate as in \eqref{StriTori} holds true for some $p<4$. If it were for the quintic nonlinearity and thus the trilinear Strichartz estimate, then the condition $r\geq 2$ is enough to guarantee \eqref{StriTori} to hold for some $p<6$, which would imply the validity of the trilinear estimate; see for example \cite{HS15} on the rank-two case $\mathbb{S}^1\times \mathbb{S}^2$. 
\end{rem}

\section{Function spaces and applications}\label{fun}
In this section, we provide consequences to local well-posedness results for nonlinear Schr\"odinger equations of the linear and bilinear Strichartz estimates obtained in previous sections. 

\subsection{Lebesgue spaces}
\begin{prop}
Let $M$ be a compact manifold such that there exists a number $p_0$ such that \eqref{Stri} holds for all $p>p_0$.  
Suppose in the Cauchy problem \eqref{Cauchyproblem} $F$ is a polynomial in $u$ and its complex conjugate $\bar{u}$ of degree $\beta$ that satisfies $F(0)=0$. Then this problem is uniformly locally well-posedness in $H^s(M)$ for any $s>s_*=\frac{d}{2}-\frac{2}{\max(\beta-1,p_0)}$ in the followng sense. For any $s_0>s_*$, $s\geq s_0$, and any bounded subset $B$ of $H^s(M)$, there exists $p=p(s_0)>\max(\beta-1,p_0)$, $T=T(B)>0$ and a unique solution in $C([-T,T], H^s(M))\cap L^p([-T,T], L^\infty(M))$ such that the solution map 
$$B\ni u_0\mapsto u\in C([-T,T], H^s(M))\cap L^p([-T,T], L^\infty(M))$$
is Lipschitz continuous. 
\end{prop}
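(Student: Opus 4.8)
The plan is to run a standard contraction-mapping argument on the Duhamel operator $\Phi(u)=e^{it\Delta}u_0-i\int_0^t e^{i(t-s)\Delta}F(u)\,ds$, in the mixed space $X_T=C([-T,T],H^s(M))\cap L^p([-T,T],L^\infty(M))$ for a suitable $p>\max(\beta-1,p_0)$ to be chosen depending only on $s_0$. First I would fix $s_0>s_*$ and observe that since $s_*=\frac d2-\frac{2}{\max(\beta-1,p_0)}$, there is some $p>\max(\beta-1,p_0)$ with $s_0>\frac d2-\frac2p$; enlarging $p$ slightly if necessary we may also assume $p>\beta-1$ strictly. The key linear ingredient is that the scale-invariant estimate \eqref{Stri} for exponents above $p_0$, combined with Sobolev embedding $H^{d/2-d/q}\hookrightarrow L^\infty$-type bounds and Littlewood--Paley theory, upgrades to the inhomogeneous estimate
\begin{align*}
\|e^{it\Delta}f\|_{L^p([-T,T],L^\infty(M))}&\lesssim \|f\|_{H^{\sigma}(M)},\\
\left\|\int_0^t e^{i(t-s)\Delta}G(s)\,ds\right\|_{C([-T,T],H^s)\cap L^p([-T,T],L^\infty)}&\lesssim T^{\theta}\|G\|_{L^1([-T,T],H^s)}
\end{align*}
for any $\sigma>\frac d2-\frac2p$ and some $\theta>0$, where the $T^\theta$ gain comes from H\"older in time since $p>1$ is dual to an exponent $<\infty$; here $s\geq s_0>\sigma$ is available because $s_0$ was chosen above $\frac d2-\frac2p$. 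The homogeneous term $e^{it\Delta}u_0$ then lies in $X_T$ with norm controlled by $\|u_0\|_{H^s}$.

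Next I would estimate the nonlinear term. Writing $F$ as a finite sum of monomials of the form $c\,u^{a}\bar u^{b}$ with $2\le a+b\le\beta$ (using $F(0)=0$), the fractional Leibniz rule / Moser-type estimate in $H^s(M)$ — valid on compact manifolds, e.g.\ via the algebra property of $H^s$ for $s>d/2$ and interpolation, or more sharply by the product estimate $\|vw\|_{H^s}\lesssim\|v\|_{H^s}\|w\|_{L^\infty}+\|v\|_{L^\infty}\|w\|_{H^s}$ — gives
\begin{align*}
\|F(u)\|_{L^1([-T,T],H^s)}\lesssim \int_{-T}^{T}\left(1+\|u(t)\|_{L^\infty}\right)^{\beta-1}\|u(t)\|_{H^s}\,dt\lesssim T^{1-\frac{\beta-1}{p}}\|u\|_{L^p_tL^\infty_x}^{\beta-1}\|u\|_{L^\infty_tH^s_x},
\end{align*}
where the time integrability is exactly where the hypothesis $p>\beta-1$ is used (so that $1-\frac{\beta-1}{p}>0$, yielding a positive power of $T$). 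The analogous difference estimate for $F(u)-F(v)$ follows by the elementary bound $|F(u)-F(v)|\lesssim(1+|u|+|v|)^{\beta-1}|u-v|$ together with the same product rule, producing
$\|F(u)-F(v)\|_{L^1_tH^s_x}\lesssim T^{\eta}\left(\|u\|_{X_T}+\|v\|_{X_T}\right)^{\beta-1}\|u-v\|_{X_T}$ for some $\eta>0$.

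Combining, $\|\Phi(u)\|_{X_T}\le C\|u_0\|_{H^s}+CT^{\eta}\|u\|_{X_T}^{\beta}$ and $\|\Phi(u)-\Phi(v)\|_{X_T}\le CT^{\eta}(\|u\|_{X_T}+\|v\|_{X_T})^{\beta-1}\|u-v\|_{X_T}$, so for $B\subset H^s$ bounded, choosing $R=2C\sup_{u_0\in B}\|u_0\|_{H^s}$ and then $T=T(B)>0$ small enough that $CT^\eta R^{\beta-1}\le\frac12$, $\Phi$ is a contraction on the ball of radius $R$ in $X_T$; this yields a unique fixed point, Lipschitz dependence on $u_0\in B$ by the usual difference estimate, and persistence of higher regularity $s\ge s_0$ because $T$ depends only on $\|u_0\|_{H^{s_0}}$ (run the argument at level $s_0$ to fix $T$, then at level $s$ to propagate regularity). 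Uniqueness in the full space $C([-T,T],H^s)\cap L^p([-T,T],L^\infty)$ follows from the same difference estimate plus a continuity/bootstrap argument on subintervals. The main obstacle I anticipate is purely bookkeeping rather than conceptual: carefully verifying that the inhomogeneous Strichartz estimate above — in particular the $L^p_tL^\infty_x$ control of the Duhamel term, as opposed to just $C_tH^s_x$ — follows cleanly from \eqref{Stri} via Littlewood--Paley, the Christ--Kiselev lemma (to pass from the global-in-time dual estimate to the retarded integral), and Sobolev embedding on $M$, while keeping all exponents consistent with the strict inequality $s_0>\frac d2-\frac2p>\frac d2-\frac{2}{\beta-1}$; everything else is the routine Kato-type fixed-point scheme.
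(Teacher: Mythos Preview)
Your proposal is correct and follows essentially the same approach as the paper: both run the Kato-type contraction argument of \cite[Proposition 3.1]{BGT04}, choosing $p>\max(\beta-1,p_0)$ with $s_0>\frac d2-\frac2p$ and closing via the product/Leibniz estimate in $H^s$. The only cosmetic difference is that the paper works in the auxiliary space $Y_T=C([-T,T],H^s)\cap L^p([-T,T],W^{\sigma,p})$ with $\sigma=s-(\frac d2-\frac{d+2}{p})>\frac d p$ (the space produced directly by \eqref{Stri} after a regularity shift), whereas you pass one Sobolev embedding further to $L^p_tL^\infty_x$; since $W^{\sigma,p}\hookrightarrow L^\infty$ for $\sigma>d/p$, the two choices are equivalent here.
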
 

\begin{proof}
This is a quick adaptation of the proof of Proposition 3.1 in \cite{BGT04}. Select $p>\max(\beta-1,p_0)$ such that $s>\frac{d}{2}-\frac{2}{p}$, and consider 
$$Y_T=C([-T,T],H^s(M))\cap L^p([-T,T],W^{\sigma,p}(M))$$
where $\sigma=s-\left(\frac{d}{2}-\frac{d+2}{p}\right)>\frac{d}{p}$. The result now follows as in the proof of \cite[Proposition 3.1]{BGT04}. 
\end{proof}

With Theorem \ref{Main} at hand, we now have the following local well-posedness result on arbitrary compact symmetric spaces. 

\begin{thm}
Suppose in the Cauchy problem \eqref{Cauchyproblem}, $F(u)$ is a polynomial function in $u$ and its complex conjugate $\bar{u}$ of degree $\beta$ such that $F(0)=0$. Then on any compact globally symmetric space $M$ of dimension $d$ and rank $r$,  
\eqref{Cauchyproblem} is uniformly well-posedness in $H^s(M)$ for any $s>\frac{d}{2}-\frac{2}{\max(\beta-1,2+8/r)}$. In particular, if $\beta\geq 3+8/r$, then \eqref{Cauchyproblem} is uniformly well-posedness in $H^s(M)$ for any $s>s_c=\frac{d}{2}-\frac{2}{\beta-1}$. 
\end{thm}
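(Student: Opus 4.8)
The plan is to derive this theorem as an immediate corollary of the abstract local well-posedness proposition established just above, specialized through Theorem \ref{Main}. First I would invoke Theorem \ref{Main}, which asserts that on any compact globally symmetric space $M$ of dimension $d$ and rank $r$ the scale-invariant Strichartz estimate \eqref{Stri} holds for every $p \geq 2 + \frac{8}{r}$; in particular the hypothesis of the preceding proposition is met with the choice $p_0 = 2 + \frac{8}{r}$, since a compact globally symmetric space is in particular a compact Riemannian manifold. There is nothing to check here beyond noting that $M$ falls in the class covered by Theorem \ref{Main}.

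Next I would apply that proposition directly: it yields uniform local well-posedness of \eqref{Cauchyproblem} in $H^s(M)$ for every $s > s_* = \frac{d}{2} - \frac{2}{\max(\beta-1,\,p_0)} = \frac{d}{2} - \frac{2}{\max(\beta-1,\,2+8/r)}$, in the precise sense recorded there (existence and uniqueness in $C([-T,T],H^s(M)) \cap L^p([-T,T],W^{\sigma,p}(M))$ for a suitable $p$ and $T=T(B)$, with Lipschitz dependence of the solution map on bounded sets of data). This is exactly the first assertion of the theorem.

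For the ``in particular'' clause I would simply compare the two exponents inside the maximum: the degree condition $\beta \geq 3 + 8/r$ is equivalent to $\beta - 1 \geq 2 + 8/r = p_0$, hence $\max(\beta-1,\,p_0) = \beta-1$ and therefore $s_* = \frac{d}{2} - \frac{2}{\beta-1} = s_c$. Thus under this restriction on the degree the well-posedness threshold coincides with the scaling-critical regularity, and we obtain uniform local well-posedness for all subcritical $s > s_c$.

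I do not expect a genuine obstacle at this stage: all the analytic substance has already been isolated elsewhere — the harmonic-analytic and exponential-sum estimates underlying Theorem \ref{Main} (Clerc's formula, Weyl differencing, Farey dissection), and the contraction-mapping argument adapting \cite[Proposition 3.1]{BGT04} in the preceding proposition, which in turn relies on the Littlewood-Paley theory on compact manifolds of \cite{BGT04} to pass between \eqref{Stri} and its frequency-localized form. The only points to verify are bookkeeping ones: that these cited facts apply verbatim on compact symmetric spaces (they do, being valid on any compact manifold), and that the elementary identity $\max(\beta-1,\,2+8/r) = \beta-1 \iff \beta \geq 3+8/r$ holds, which is immediate.
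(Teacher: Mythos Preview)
Your proposal is correct and matches the paper's approach exactly: the theorem is stated in the paper without a separate proof, as an immediate consequence of the preceding abstract proposition together with Theorem \ref{Main}, which supplies the Strichartz estimate \eqref{Stri} for all $p\geq 2+8/r$ and hence allows the choice $p_0=2+8/r$. Your handling of the ``in particular'' clause via the elementary equivalence $\beta\geq 3+8/r \iff \beta-1\geq p_0$ is also precisely what is intended.
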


\subsection{$X^{s,b}$-spaces}
Let $M$ be an arbitrary compact Riemannian manifold. Define the following $X^{s,b}$-norm first explicitly introduced by Bourgain \cite{Bou93} on the setting of tori:
$$\|u\|_{X^{s,b}(\R\times M)}:=\left\|e^{-it\Delta}u(t,\cdot)\right\|_{H^b(\R,H^s(M))}.$$
For any $T>0$, define 
$$\|u\|_{X^{s,b}_T}:=\inf_{w\in X^{s,b}(\R\times M)}\left\{\|w\|_{X^{s,b}(\R\times M)}:\ w|_{[-T,T]}=u\right\}. $$
Then we have the following standard proposition, which we refer to Theorem 3 in \cite{BGT05} and its proof; see also \cite{Bou93}, \cite{GOW14}. 
\begin{prop}\label{subcriticalwellposedness}
Suppose \eqref{subcriticalbilinearStrichartz} holds. Suppose in \eqref{Cauchyproblem}, $F(u)$ equals any of $\pm|u|^2u$, $\pm u^3$, $\pm |u|^2\bar{u}$, $\pm \bar{u}^3$. Then \eqref{Cauchyproblem} is uniformly locally well-posed in $H^s(M)$ for any $s>s_c=\frac{d}{2}-1$ in the following sense. For any $s_0>s_c$, $s\geq s_0$, and any bounded subset $B$ of $H^s(M)$ there exist $b=b(s_0)>\frac{1}{2}$, $T=T(B)>0$ and a unique solution in $X^{s,b}_T(M)=C([-T,T], H^s(M))\cap X^{s,b}_T(M)$, such that 
the solution map 
$$B\ni u_0\mapsto u\in X^{s,b}_T(M)$$
is Lipschitz continuous. 
\end{prop}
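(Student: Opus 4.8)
The plan is to run Bourgain's contraction-mapping argument in the Fourier restriction spaces, following verbatim the template of \cite[Theorem 3]{BGT05} (see also \cite{Bou93, GOW14}); the only input specific to the present geometry is the bilinear Strichartz estimate \eqref{subcriticalbilinearStrichartz}, which will be fed into a trilinear $X^{s,b}$-estimate. First I would recall the two classical linear bounds, valid on any compact manifold and proved by routine Fourier analysis in the time variable: the homogeneous estimate $\|\eta(t)e^{it\Delta}u_0\|_{X^{s,b}(\R\times M)}\lesssim\|u_0\|_{H^s(M)}$ for a fixed cutoff $\eta\in C_c^\infty(\R)$ and any $b\in\R$, and the inhomogeneous (Duhamel) estimate
\[
\Big\|\eta(t/T)\int_0^t e^{i(t-\tau)\Delta}F(\tau)\,d\tau\Big\|_{X^{s,b}(\R\times M)}\lesssim T^{1-b-b'}\|F\|_{X^{s,-b'}(\R\times M)}
\]
for $0<b'<\tfrac12<b$ with $b+b'<1$. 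These reduce the fixed-point problem for $\Phi$ on a ball of $X^{s,b}_T(M)$ to establishing, for some such $b,b'$ and all $s>s_c=\tfrac d2-1$, the trilinear estimate $\|u_1u_2u_3\|_{X^{s,-b'}(\R\times M)}\lesssim\prod_{i=1}^3\|u_i\|_{X^{s,b}(\R\times M)}$, together with the variants obtained by conjugating some of the factors, which are handled identically.

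For the trilinear estimate I would dualize against a fourth function $v\in X^{-s,b'}$, Littlewood–Paley decompose each factor as $u_i=\sum_{N_i}P_{N_i}u_i$ and $v=\sum_{N_4}P_{N_4}v$, and bound the resulting quadrilinear space-time pairing over $\mathbb{T}\times M$ (recall from Section \ref{linearStri} that the time variable may be taken on the circle $\mathbb{T}$). By the transfer principle — expressing an $X^{0,b}$ function with $b>\tfrac12$ as an $\ell^1$-summable superposition, in the modulation variable, of modulated free evolutions $e^{it\Delta}f$ — the bilinear Strichartz estimate \eqref{subcriticalbilinearStrichartz} upgrades to $\|P_{N}u\cdot P_{N'}w\|_{L^2(\mathbb{T}\times M)}\lesssim_\varepsilon\min(N,N')^{\frac d2-1+\varepsilon}\|P_{N}u\|_{X^{0,b}}\|P_{N'}w\|_{X^{0,b}}$. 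Splitting the pairing as $\langle P_{N_1}u_1\,P_{N_4}v,\ \overline{P_{N_2}u_2}\,\overline{P_{N_3}u_3}\rangle$ (pairing, for the purpose of optimizing, the largest frequency with the smallest), applying Cauchy–Schwarz and the two bilinear bounds gives, with $N_{(1)}\ge N_{(2)}\ge N_{(3)}\ge N_{(4)}$ the ordered dyadic frequencies,
\[
\Big|\int_{\mathbb{T}\times M}P_{N_1}u_1\,\overline{P_{N_2}u_2}\,P_{N_3}u_3\,\overline{P_{N_4}v}\Big|\lesssim_\varepsilon N_{(3)}^{\frac d2-1+\varepsilon}N_{(4)}^{\frac d2-1+\varepsilon}\,\|P_{N_4}v\|_{X^{0,b'}}\prod_{i=1}^{3}\|P_{N_i}u_i\|_{X^{0,b}},
\]
where Lemma \ref{Fouriersupport} (applied twice) constrains the frequency of $v$ to be $\lesssim\max_i N_i$ and, in the regime where the top two among $N_1,N_2,N_3$ are not comparable, forces the largest two dyadic blocks of the whole configuration to be comparable. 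Inserting the Sobolev weights $N_i^s$ and $N_4^{-s}$ and summing the resulting geometric series over the four dyadic parameters is then possible precisely because $s>\tfrac d2-1$, the $\varepsilon$-loss being absorbed by choosing $\varepsilon<s-s_c$.

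The main obstacle is exactly this trilinear $X^{s,b}$-estimate: one must organize the dyadic summation so that the low-frequency gains $N_{(3)}^{\frac d2-1+\varepsilon}$ and $N_{(4)}^{\frac d2-1+\varepsilon}$ from \eqref{subcriticalbilinearStrichartz} pay for the derivative loss at the top frequency, which amounts to showing that after the Sobolev weights are distributed the net power of the largest frequency is negative. This succeeds only because the Fourier-support lemma pins the two largest frequencies of the configuration to be comparable, so the top weight $N_{(1)}^s$ is matched against a second factor carrying regularity $s$, while the residual growth in the two lower frequencies is summable for $s>s_c$; in the resonant regime where all four frequencies are comparable one simply has the pointwise gain $N^{d-2+2\varepsilon}$ against the weight $N^{2s}$, which is again favorable. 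Once the trilinear estimate is in place, the two linear estimates make $\Phi$ a contraction on a ball of $X^{s,b}_T(M)$ with $T=T(\|u_0\|_{H^s})$ and radius depending on the data; persistence of higher Sobolev regularity, uniqueness in the whole space, and Lipschitz dependence of the data-to-solution map follow from the same multilinear bounds by the standard arguments of \cite{BGT05, Bou93}, and the passage between the global norm $X^{s,b}(\R\times M)$ and the restriction norm $X^{s,b}_T(M)$ is routine.
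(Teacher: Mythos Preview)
Your proposal is correct and follows exactly the approach the paper intends: the paper does not give an independent proof of this proposition but simply declares it standard and defers to \cite[Theorem~3]{BGT05} (with \cite{Bou93,GOW14}), and your sketch is precisely that argument---linear $X^{s,b}$ bounds, reduction to a trilinear estimate, transfer of \eqref{subcriticalbilinearStrichartz} to $X^{0,b}$, dyadic decomposition with the Fourier-support constraint of Lemma~\ref{Fouriersupport}, and summation for $s>s_c$. The one point to tighten is the appearance of $\|P_{N_4}v\|_{X^{0,b'}}$ with $b'<\tfrac12$ after invoking a transfer principle that a priori needs $b>\tfrac12$; this is the usual subtlety handled in \cite{BGT05} by interpolating the bilinear $L^2$ bound against a trivial estimate to allow one factor at the lower exponent, and you should cite that step rather than leave it implicit.
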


\subsection{$U^p$-, $V^p$-spaces}
Let $1\leq p<\infty$. A step function $a:\R\to L^2(M)$ is called a $U^p$-atom, if 
$$a(t)=\sum_{k=1}^K \chi_{[t_{k-1},t_k)}a_k, \ \ \sum_{k=1}^K\|a_k\|^p_{L^2(M)}=1$$
for a partition $-\infty<t_0<\cdots<t_K\leq\infty$. The  $U^p(\mathbb{R}, L^2(M))$-norm is defined as the corresponding atomic space, i.e., 
$$\|u\|_{U^p(\mathbb{R}, L^2(M))}:=\inf\left\{\|\lambda_j\|_{l^1}:\ u=\sum_{j=1}^\infty \lambda_ja_j\t{ for }U^p\t{-atoms }a_j\right\}.$$
Then we define the $U^{2,s}(\R\times M)$-norm by 
$$\|u\|_{U^{2,s}(\R\times M)}:=\left(\sum_{N\geq 1}N^{2s}\|P_N(u(t))\|^2_{U^2(\mathbb{R}_t, L^2(M))}\right)^{\frac{1}{2}},$$
and the $X^s(\R\times M)$-norm by 
$$\|u\|_{X^s(\R\times M)}:=\|e^{-it\Delta}u\|_{U^{2,s}(\R\times M)},$$
and finally, for $T>0$, the $X^s_T(M)$-norm by 
$$\|u\|_{X^{s}_T(M)}:=\inf_{w\in X^{s}(\R\times M)}\left\{\|w\|_{X^{s}(\R\times M)}:\ w|_{[-T,T]}=u\right\}. $$

The following proposition is also standard, which may be proved by a straightforward adaptation of the argument in \cite{HS15} and \cite{HTT11} as well as in \cite{Her13}. Note that in the original treatment in \cite{HTT11} on the torus case, the above $U^{2,s}(\R\times M)$-norm is defined in a different way by taking the spectral localization to each individual spectral parameter instead of using the Littlewood-Paley projectors in order to use conveniently some orthogonality argument that is valid on tori. Then Herr in \cite{Her13} showed that this is not necessary and an alternate and general approach exists, which was then also used in \cite{HS15}.  

\begin{prop}\label{criticalwellposedness}
Suppose \eqref{bilineardelta} holds. Suppose in \eqref{Cauchyproblem}, $F(u)$ equals any of $\pm|u|^2u$, $\pm u^3$, $\pm |u|^2\bar{u}$, $\pm \bar{u}^3$. Then the Cauchy problem \eqref{Cauchyproblem} is locally well-posedness in $H^{s_c}(M)$ in the following sense. Let $s\geq s_c$. For any $\phi_*\in H^{s_c}(M)$, define 
$$B_\varepsilon(\phi_*):=\left\{\phi\in H^{s_c}(M):\ \|\phi-\phi_*\|_{H^{s_c}}<\varepsilon\right\}.$$
Then there exists $\varepsilon>0$ and $T=T(\phi_*)>0$ such that for any initial datum $\phi\in B_\varepsilon(\phi_*)\cap H^{s}(M)$, the Cauchy problem has a unique solution in $C([-T,T], H^{s}(M))\cap X^s_T(M)$, and the solution map 
$$B_\varepsilon(\phi_*)\cap H^s(M)\ni \phi\mapsto u\in C([-T,T], H^{s}(M))\cap X^s_T(M)$$
is Lipschitz continuous. 
\end{prop}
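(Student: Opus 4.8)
The plan is to follow the by-now-standard scheme of Herr--Tataru--Tzvetkov \cite{HTT11} and Herr \cite{Her13}, in the form used by Herr--Strunk \cite{HS15}: set up a contraction for the Duhamel operator $\Phi$ in the critical space $X^{s_c}_T(M)$, using the $U^2$-, $V^2$-function-space calculus together with a single trilinear nonlinear estimate, into which the bilinear Strichartz estimate \eqref{bilineardelta} feeds. First I would record the abstract, manifold-independent facts about these spaces: the atomic description, the embeddings $U^2_\Delta\hookrightarrow V^2_\Delta\hookrightarrow U^p_\Delta$ for $p>2$, the duality pairing $(U^2_\Delta)^*=V^2_\Delta$ (which lets one test $\|\cdot\|_{X^s_T}$ against $X^{-s}_T$), the transference principle upgrading any space-time estimate for the free evolution $e^{it\Delta}f$ to one for $u\in U^2_\Delta$, and the Duhamel estimate $\big\|\int_0^t e^{i(t-s)\Delta}F(s)\,ds\big\|_{X^s_T}\lesssim \sup\{|\langle F,v\rangle_{L^2(\R\times M)}|:\ \|v\|_{X^{-s}_T}\le 1\}$. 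All of these are proved verbatim as in \cite[Section 2]{HTT11}, \cite{Her13}, \cite{HS15} and are insensitive to the geometry of $M$.

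The heart of the matter is the trilinear estimate
\[ \Big\|\int_0^t e^{i(t-s)\Delta}\big(\textstyle\prod_{j=1}^3\widetilde{u_j}\big)\,ds\Big\|_{X^s_T(M)}\ \lesssim\ \sum_{j=1}^3\|u_j\|_{X^s_T(M)}\prod_{i\ne j}\|u_i\|_{X^{s_c}_T(M)},\qquad s\ge s_c, \]
where each $\widetilde{u_j}$ is $u_j$ or $\overline{u_j}$. I would prove it by dualizing against $P_{N_0}v$, Littlewood--Paley decomposing $u_j=\sum_{N_j}P_{N_j}u_j$, ordering $N_1\ge N_2\ge N_3$ (so $N_0\lesssim N_1$), and bounding the resulting integral of four frequency-localized factors by Cauchy--Schwarz plus two applications of the transferred form of \eqref{bilineardelta},
\[ \|P_{L_1}u\,P_{L_2}w\|_{L^2(I\times M)}\lesssim \Big(\tfrac{\min(L_1,L_2)}{\max(L_1,L_2)}+\tfrac{1}{\min(L_1,L_2)}\Big)^{\delta}\min(L_1,L_2)^{\frac d2-1}\|u\|_{U^2_\Delta}\|w\|_{U^2_\Delta}, \]
pairing the two largest frequencies against the two smallest. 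The power of the frequencies is balanced precisely by the relation $s_c=\tfrac d2-1$, while the two factors $(\cdots)^\delta$ furnish off-diagonal decay in $N_0/N_1$ and in $1/N_2$ that renders the triple dyadic sum convergent; in the fully resonant regime $N_0\sim N_1\sim N_2\sim N_3$ the factor $N_2^{-\delta}$ alone suffices. As in \cite{HTT11,HS15}, a short interpolation between this $U^2_\Delta$ bilinear bound and the trivial bound coming from $V^2_\Delta\hookrightarrow L^\infty_t L^2_x$ may be needed to place the two inner factors in $V^2_\Delta$ (hence $U^2_\Delta$) and to keep a genuine positive power for the summation.

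With these estimates in hand the conclusion is routine. The small-data statement is immediate: $\Phi$ contracts on a small ball of $X^{s_c}_T(M)$ via $\|e^{it\Delta}\phi\|_{X^{s_c}}\lesssim\|\phi\|_{H^{s_c}}$ and the trilinear estimate, the solution map is Lipschitz by multilinearity, and persistence of $H^s$-regularity for $s>s_c$ comes from the same estimates with one factor carried in $X^s_T$. For a general base point $\phi_*\in H^{s_c}(M)$ one runs the standard critical localization: introduce a weaker critical norm $\|u\|_{Z_T}$ built from a subcritical $V^p$-Strichartz quantity, prove the version of the trilinear estimate with the inner factors measured in $Z_T$ (same inputs), note that $\|e^{it\Delta}\phi_*\|_{Z_T}\to 0$ as $T\to 0$ by density of smooth data in $H^{s_c}$, and contract on $\{u:\ \|u\|_{X^{s_c}_T}\le 2C\|\phi_*\|_{H^{s_c}},\ \|u\|_{Z_T}\le\eta\}$ for $\eta$ and $T=T(\phi_*)$ small.

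I expect the main obstacle to be the trilinear estimate at the \emph{critical} regularity: organizing the frequency case analysis so that, in every configuration, the choice of which two frequencies to pair in each bilinear application yields an honest negative power of $\max/\min$ (or of the top frequency) making the triple dyadic sum summable, while invoking only \eqref{bilineardelta} and elementary interpolation. Everything else --- the $U^2$-, $V^2$-calculus and the final contraction and localization --- is standard once this estimate is established.
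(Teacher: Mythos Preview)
Your proposal is correct and follows exactly the approach the paper indicates: the paper does not actually give a proof of this proposition but simply declares it ``standard'' and refers to a straightforward adaptation of \cite{HTT11}, \cite{Her13}, \cite{HS15}, with the remark that one should use the Littlewood--Paley formulation of the $U^{2,s}$-norm (as in \cite{Her13,HS15}) rather than the single-mode projectors of \cite{HTT11}. Your sketch of the $U^2/V^2$ calculus, the dualized trilinear estimate fed by two applications of \eqref{bilineardelta}, and the $Z_T$-localization for large base data is precisely that adaptation, so there is nothing to add.
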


With Theorem \ref{Main2} at hand, we now have the following theorem as a consequence of Proposition \ref{subcriticalwellposedness} and \ref{criticalwellposedness}.

\begin{thm}
Suppose in \eqref{Cauchyproblem}, $F(u)$ equals any of $\pm|u|^2u$, $\pm u^3$, $\pm |u|^2\bar{u}$, $\pm \bar{u}^3$. Then on such $M$ as assumed in (i) of Theorem \ref{Main2}, we have that \eqref{Cauchyproblem} is locally well-posed in $H^s(M)$ for any $s>s_c=\frac{d}{2}-1$, and on such $M$ as assumed in (ii) of Theorem \ref{Main2}, we have that \eqref{Cauchyproblem} is  locally well-posed in $H^{s_c}(M)$. 
\end{thm}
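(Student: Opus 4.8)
The plan is to feed the bilinear Strichartz estimates of Theorem \ref{Main2} into the abstract well-posedness criteria of Propositions \ref{subcriticalwellposedness} and \ref{criticalwellposedness}; no new analysis is required. Observe first that the cubic nonlinearity has degree $\beta=3$, so its critical Sobolev exponent is $s_c=\frac{d}{2}-\frac{2}{\beta-1}=\frac{d}{2}-1$, which is exactly the exponent appearing in both propositions. Each of the admissible nonlinearities $\pm|u|^2u$, $\pm u^3$, $\pm|u|^2\bar u$, $\pm\bar u^3$ is one of the four forms explicitly allowed in Propositions \ref{subcriticalwellposedness} and \ref{criticalwellposedness}.

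For the subcritical statement, take $M$ as in Theorem \ref{Main2}(i): a product of rank-one compact symmetric spaces with each factor of dimension at least $3$ (with $r\geq 2$), or such a product with a rational $r_0$-dimensional torus prepended (with $r\geq 3$). Theorem \ref{Main2}(i) asserts precisely the bilinear Strichartz estimate \eqref{subcriticalbilinearStrichartz} on these spaces, which is the sole hypothesis of Proposition \ref{subcriticalwellposedness}. Applying that proposition yields uniform local well-posedness in $H^s(M)$ for every $s>s_c=\frac{d}{2}-1$, together with Lipschitz dependence of the solution map into $X^{s,b}_T(M)$, as claimed. For the critical statement, take $M$ as in Theorem \ref{Main2}(ii): additionally each rank-one factor has dimension at least $4$ and $r\geq 3$. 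Then Theorem \ref{Main2}(ii) supplies the refined, gain-bearing estimate \eqref{bilineardelta} with some $\delta>0$, which is exactly the hypothesis of Proposition \ref{criticalwellposedness}; invoking it gives local well-posedness at the critical regularity $s=s_c$, with Lipschitz dependence on data in a ball of $H^{s_c}(M)$ intersected with $H^s(M)$ for $s\geq s_c$.

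Since both propositions, and the bilinear estimates they require, are already established, the only points to verify are bookkeeping: that the manifolds named in the theorem coincide verbatim with those for which Theorem \ref{Main2} has been proved, and that \eqref{subcriticalbilinearStrichartz} and \eqref{bilineardelta} are quoted in the precise form demanded by Propositions \ref{subcriticalwellposedness} and \ref{criticalwellposedness}. Accordingly there is no genuine obstacle here; all the analytic difficulty has been absorbed into the bilinear estimates, whose substantive proof runs through Proposition \ref{jointspectral} and the counting Lemma \ref{counting} (and, in the mixed case, through Proposition \ref{mixed}, where the rank condition $r\geq 3$ is what makes the torus Strichartz input \eqref{StriTori} available below exponent $4$), and into the $X^{s,b}$- and $U^p$/$V^p$-space machinery underlying the two propositions.
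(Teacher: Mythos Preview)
Your proposal is correct and is exactly the paper's approach: the theorem is stated immediately after the sentence ``With Theorem \ref{Main2} at hand, we now have the following theorem as a consequence of Proposition \ref{subcriticalwellposedness} and \ref{criticalwellposedness}'' and carries no separate proof, so the argument really is just the bookkeeping you describe---matching the bilinear estimates of Theorem \ref{Main2}(i) and (ii) to the hypotheses \eqref{subcriticalbilinearStrichartz} and \eqref{bilineardelta} of the two abstract well-posedness propositions.
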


\end{document}